\DeclareMathOperator{\St}{Stab}
\newtheorem{theo}{Theorem}[section]
\newtheorem{theorem}[theo]{Theorem}
\newtheorem{prop}[theo]{Proposition}
\newtheorem{defn}[theo]{Definition}
\newtheorem{lemma}[theo]{Lemma}
\newtheorem{cor}[theo]{Corollary}
\newtheorem{rem}[theo]{Remark}
\newtheorem{example}[theo]{Example}
\newcommand{\la}{\langle}
\newcommand{\ra}{\rangle}
\newcommand{\wt}{\widetilde}
\def\vlongrightarrow{\relbar\joinrel\longrightarrow}
\def\vvlongrightarrow{\relbar\joinrel\vlongrightarrow}
\def\vvvlongrightarrow{\relbar\joinrel\vvlongrightarrow}
\def\vlongmapright#1{\smash{\mathop{\vvlongrightarrow}\limits^{#1}}}
\newcommand{\vlongfr}[2]{\smash{\stackrel{\text{\tiny{$#1|#2$}\,}}{\vvlongrightarrow}}}
\newcommand{\vvlongfr}[2]{\smash{\stackrel{\text{\tiny{$#1|#2$}\,}}{\vvvlongrightarrow}}}
\newcommand{\mapright}[1]{\smash{\stackrel{\text{\tiny{$#1$}}}{\vlongrightarrow}}}
\begin{document}
\keywords{Tree automaton group, Dual automaton, Tree automaton semigroup, Reducible automaton, Poly-context-free group.}

\title[On a class of poly-context-free groups generated by automata]{On a class of poly-context-free groups generated by automata}

\author{Matteo Cavaleri}
\address{Matteo Cavaleri, Universit\`{a} degli Studi Niccol\`{o} Cusano - Via Don Carlo Gnocchi, 3 00166 Roma, Italia}
\email{matteo.cavaleri@unicusano.it}

\author{Daniele D'Angeli}
\address{Daniele D'Angeli, Universit\`{a} degli Studi Niccol\`{o} Cusano - Via Don Carlo Gnocchi, 3 00166 Roma, Italia}
\email{daniele.dangeli@unicusano.it \qquad \textrm{(Corresponding Author)}}

\author{Alfredo Donno}
\address{Alfredo Donno, Universit\`{a} degli Studi Niccol\`{o} Cusano - Via Don Carlo Gnocchi, 3 00166 Roma, Italia}
\email{alfredo.donno@unicusano.it}

\author{Emanuele Rodaro}
\address{Emanuele Rodaro, Politecnico di Milano - Piazza Leonardo da Vinci, 32 20133 Milano, Italia}
\email{emanuele.rodaro@polimi.it}

\begin{abstract}
This paper deals with graph automaton groups associated with trees and some generalizations. We start by showing some algebraic properties of tree automaton groups. Then we characterize the associated semigroup, proving that it is isomorphic to the partially commutative monoid associated with the complement of the line graph of the defining tree. After that, we generalize these groups by introducing the quite broad class of reducible automaton groups, which lies in the class of contracting automaton groups without singular points. We give a general structure theorem that shows that all reducible automaton groups are direct limit of poly-context-free groups which are virtually subgroups of the direct product of free groups; notice that this result partially supports a conjecture by T. Brough. Moreover, we prove that tree automaton groups with at least two generators are not finitely presented and they are amenable groups, which are direct limit of non-amenable groups.
\end{abstract}

\maketitle

\begin{center}
{\footnotesize{\bf Mathematics Subject Classification (2020)}: 20E08, 20F05, 20F10, 20M35, 68Q70.}
\end{center}

\section{Introduction}

Automaton groups constitute a remarkable and exciting class of groups generated by finite transducers also called Mealy machines. This class has gained popularity thanks to the Grigorchuk group, introduced in 1980 in \cite{gri80} as the first example of a group of intermediate (i.e., faster than polynomial and slower than exponential) growth, answering to an important question posed by Milnor. Over the last decades, this class has been shown to have deep connections with the theory of profinite groups, with combinatorics via the notion of Schreier graph, and with complex dynamics via the notion of iterated monodromy group (for more details see, for instance, \cite{fractal_gr_sets, bhn:aut_til, dynamicssubgroup, nekrashevyc}). In the last years, a special interest has been pointed out for decision problems for automaton groups and semigroups (see \cite{DaRo14, freeness, israel, gillibert, gill, con} and references therein). Many automaton groups exhibit very interesting and exotic properties. In fact this class contains, for instance, examples of Burnside groups, of amenable but not elementary amenable groups, of groups with intermediate growth. Although there are many well studied examples of automaton groups and a quite extensive literature, very little is known from their very general structural point of view. It is easy to show that such groups must be residually finite and that they must have solvable word problem, but besides such properties there exists no general classification. Even just focusing on some specific subclasses of automaton groups (for instance branch, fractal, just-infinite, e.g., \cite{am, pipa}) it seems to be a hard task to obtain structural results.

This paper has the aim of studying some properties of a class of automaton groups, called tree automaton groups, belonging to the family of graph automaton groups introduced by the authors in \cite{articolo0} (and further studied in \cite{articolo1}) and of framing such class into a more general class of automaton groups having some very peculiar topological properties.

In Section \ref{sectiondue} we recall some preliminary definitions and properties about automaton groups and dual automata, whereas in Section \ref{sectiontre} we recall the basic definition of graph automaton group introduced in \cite{articolo0}.\\
\indent In Section~\ref{sec: some properties} we obtain some algebraic properties of such groups: we prove that they are not solvable (Corollary \ref{coro: non-solvable}), they have trivial center (Corollary \ref{tree}), and we provide some necessary conditions on the structure of their relations (Theorem \ref{relazioni}), with an application to the context of tree automaton groups. \\
\indent In Section~\ref{sec: semigroups} we study the semigroups defined by the generators of a tree automaton group and we prove that they are all partially commutative monoids. The graph defining the partial commutations is obtained by taking the complement of the line graph of the tree $T$ associated with the tree automaton group (Theorem \ref{theo: submonoid is a trace monoid}). It is a remarkable fact that this result does not depend on the particular orientation of the defining tree. On the other hand, when the graph defining the automaton group is not a tree, the semigroup structure does depend on the orientation of the edges, as observed in Remark \ref{interesse}. \\
\indent Finally, in Section~\ref{sec: a general structure theorem}, inspired by a property about relations that holds for tree automaton groups, we define a quite broad class of automaton groups, generated by automata that we call reducible. This class sits in the intersection between automaton groups without singular points and contracting automaton groups. Besides containing tree automaton groups, it contains for instance the famous Basilica group, introduced by R. Grigorchuk and A. \.{Z}uk in \cite{grizuk1} as a group generated by a three state automaton. At the end of the section, we also exhibit a procedure to generate other examples of such automata. We prove a general structure theorem which shows that a group generated by a reducible automaton is isomorphic to the direct limit of virtually subgroups of the direct product of free groups; such subgroups either are abelian or they contain a free non-abelian group (Theorem \ref{thmastrazeneca} and Proposition \ref{prop: dichotomy}). As a consequence, we are able to show that a group generated by a reducible automaton and having exponential growth, is a direct limit of non-amenable groups (Theorem \ref{thmnonome}). Moreover, under the further assumption that such a group is amenable, our results imply that it is not finitely presented (Theorem \ref{presentability}). We also show that the family of groups appearing in the direct limit are deterministic poly-context-free groups, a class of groups introduced by Brough \cite{Tara} for which it has been conjectured to be virtually subgroups of the direct product of free groups. Although we do not prove this conjecture, we support it by showing that the groups appearing in this limit satisfy this conjecture thanks to the fact that the words representing the identity constitute a context-free language arising by taking the inverse image of a Dyck language by a finite transducer.

\section{Preliminaries on automaton groups}\label{sectiondue}
Let $X$ be a finite set. For each integer $n\geq 1$, let $X^n$ (resp. $X^{\geq n}$) denote the set of words of length $n$ (resp. of length greater than or equal to $n$) over the alphabet $X$ and put $X^0  = \{\emptyset\}$, where $\emptyset$ is the empty word. Moreover put $X^\ast= \bigcup_{n=0}^\infty X^n$ and denote by $X^\infty = \{x_1x_2x_3\ldots : x_i\in X\}$ the set of infinite words over $X$.

A finite \textit{automaton} is a quadruple $\mathcal{A} =(A,X,\lambda,\mu)$, where:
\begin{enumerate}
\item $A$ is a finite set, called the set of \emph{states};
\item $X$ is a finite set, called the \emph{alphabet};
\item $\lambda: A\times X \rightarrow A$ is the \emph{restriction} map;
\item $\mu: A\times X \rightarrow X$ is the \emph{output} map.
\end{enumerate}
Observe that many authors refer to such an automaton as a deterministic automaton. Given a state $s\in A$ and an element $x\in X$, we put:
$$
s\circ x =\mu(s,x)\in X  \qquad \qquad s\cdot x=\lambda(s,x) \in A.
$$
The automaton $\mathcal{A}$ is \emph{invertible} if, for all $s\in
A$, the transformation $s\circ:X\rightarrow X$ is a permutation of $X$. An automaton $\mathcal{A}$ can be visually
represented by its \textit{Moore diagram}: this is a directed labeled graph whose vertices are identified with the states of
$\mathcal{A}$. For every state $s\in A$ and every letter $x\in X$, the diagram has an arrow from $s$ to $s\cdot x$ labeled by $x|s\circ x$. A sink $id$ in $\mathcal{A}$ is a state with the property that $id\circ x=x$ and $id\cdot x=id$ for every $x\in X$. One can visualize an invertible automaton by using such a directed graph: for any $s\in A$ and $x\in X$ there is exactly one transition of the form
$$
s\vlongmapright{x|s\circ x}s\cdot x.
$$
For an automaton $\mathcal{A} =(A,X,\lambda,\mu)$ we may define its square automaton $\mathcal{A}^2=(A^2, X, \lambda', \mu')$ having the transition $s_1s_2\mapright{x|y}t_1t_2$ whenever $s_1\mapright{x|z}t_1$ and $s_2\mapright{z|y}t_2$ are transitions in $\mathcal{A}$. In a similar fashion we may define the $n$-th power $\mathcal{A}^n$ of $\mathcal{A}$ for each $n$.

The action of $\mathcal{A}$ can be naturally extended to the infinite set $X^\ast$ and to the set $X^\infty$ of infinite words over $X$. Moreover one can compose the action of the states in $A$ extending the maps $\circ$ and $\cdot$ to the set $A^{\ast}$. More precisely, given $w=s_1\cdots s_n\in A^*$ and
$u\in X^{\ast}$ we have:
\begin{eqnarray}\label{papaakiev}
w\circ u=(s_2\ldots s_n)\circ (s_1\circ u) \qquad w\cdot u=\left((s_2\ldots s_n)\cdot (s_1\circ u) \right)(s_1\cdot u).
\end{eqnarray}
Given an invertible automaton $\mathcal{A}$, the \textit{automaton group} generated by $\mathcal{A}$ is by definition the group generated by the bijective transformations $s\circ $ of $X^\ast$, for $s\in A$, and it is denoted by $G(\mathcal{A})$. We refer the interested reader to the monograph \cite{nekrashevyc} for a more comprehensive discussion.                                                                                   When we consider the generating set $A$ without inverses, we get a semigroup $S(\mathcal{A})$,  called the \emph{automaton semigroup} generated by $\mathcal{A}$.
Notice that the action of $G(\mathcal{A})$ (and so $S(\mathcal{A})$) on $X^\ast$ preserves the sets $X^n$, for each $n$.
Moreover, it is not difficult to check that the maps in Eqs. \eqref{papaakiev} naturally extend to the free monoid $(A\cup A^{-1})^\ast$, and they are well defined also on the free group $F_A$ generated by $A$, and on the automaton group $G(\mathcal{A})$.\\
\indent It is a remarkable fact that an automaton group can be regarded in a very natural way as a group of automorphisms of the rooted regular tree of degree $|X|=k$, i.e., the rooted tree $T_k$ in which each vertex has $k$ children, via the identification of the $k^n$ vertices of the $n$-th level $L_n$ of $T_k$ with the set $X^n$ (in Fig. \ref{alberone} the first three levels of the rooted tree $T_4$ are represented). Moreover, the boundary of $T_k$ is naturally identified with the set $X^\infty$.

\begin{figure}[h] \begin{center}\begin{tiny}

\psfrag{O}{$\emptyset$}
\psfrag{0}{$0$} \psfrag{1}{$1$} \psfrag{2}{$2$} \psfrag{3}{$3$}

\psfrag{00}{$00$} \psfrag{01}{$01$}   \psfrag{02}{$02$} \psfrag{03}{$03$}
\psfrag{10}{$10$} \psfrag{11}{$11$}   \psfrag{12}{$12$} \psfrag{13}{$13$}
\psfrag{20}{$20$} \psfrag{21}{$21$}   \psfrag{22}{$22$} \psfrag{23}{$23$}
\psfrag{30}{$30$} \psfrag{31}{$31$}   \psfrag{32}{$32$} \psfrag{33}{$33$}

\includegraphics[width=0.90\textwidth]{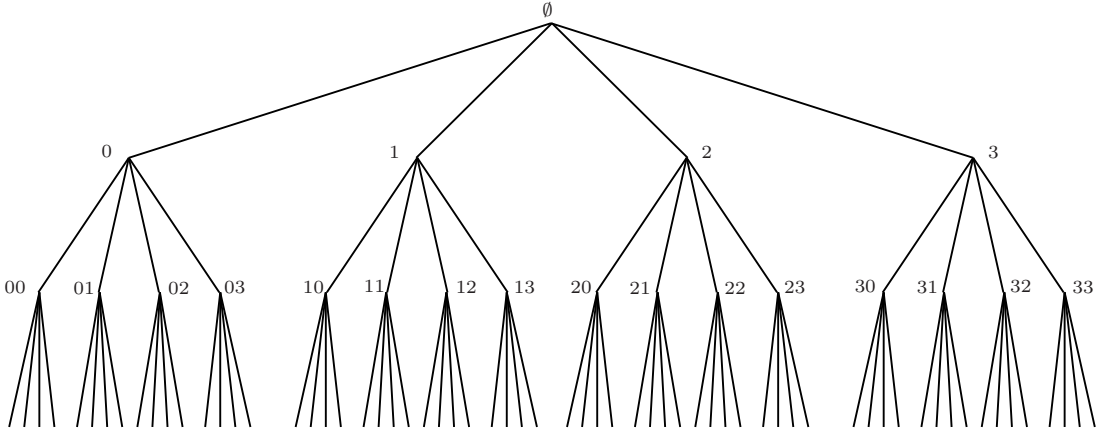} \end{tiny}
\end{center}\caption{The first three levels of the rooted tree $T_4$.} \label{alberone}
\end{figure}

Put $X=\{1,\ldots, k\}$. The action of $g\in G(\mathcal{A})$ (resp. in $S(\mathcal{A})$) on $X^\ast$ can be factorized by considering its action on $X$ and $|X|$ restrictions as follows. Let $Sym(k)$ be the symmetric group on $k$ elements. Then an element $g\in G(\mathcal{A})$ (resp. in $S(\mathcal{A})$) can be represented as
$$
g=(g_1,\ldots, g_{k})\sigma,
$$
where $g_i:=g\cdot i\in G(\mathcal{A})$  (resp. in $S(\mathcal{A})$) and $\sigma\in Sym(k)$ describes the action of $g$ on $X$. This is the \emph{self-similar representation} of $g$. In the tree interpretation, the permutation $\sigma$ corresponds to the action of $g$ on the first level $L_1$ of $T_k$, and the automorphism $g_i$ is the restriction of the action of $g$ to the subtree (isomorphic to $T_k$) rooted at the $i$-th vertex of the first level.

An important class of automata is given by the so-called bounded automata \cite{sidki}. An automaton is said to be \emph{bounded} if the sequence of numbers of distinct paths of length $n$ avoiding the sink state (along the directed edges of the Moore diagram) is bounded.
Finally, it is known that, if the automaton $\mathcal{A}$ is bounded, then the group $G(\mathcal{A})$ is amenable (see, e.g., \cite{amenability bounded}), i.e., it admits a finitely additive, left-invariant, probability measure. This concept is strictly related to that of growth. Given a finitely generated group with respect to a symmetric generating set, its growth describes how asymptotically behaves the function that counts the number of elements that can be written as a product of $n$ generators. It is known that a group of subexponential growth is amenable (see \cite{TC} for more details). A celebrated result of Gromov says that a finitely generated group is virtually nilpotent if and only if it has polynomial growth \cite{gromov}.

Let $G(\mathcal{A})$ be an automaton group. For a vertex $v\in T_k$, we denote by $\St_{G(\mathcal{A})}(v)$ the stabilizer of $v$, that is, the subgroup consisting of all elements of $G(\mathcal{A})$ fixing $v$. We also put $\St_{G(\mathcal{A})}(L_n) = \bigcap_{v\in L_n} \St_{G(\mathcal{A})}(v)$. Observe that $\St_{G(\mathcal{A})}(L_1) = \St_{G(\mathcal{A})}(X)$. \\
\indent Let $\psi: \St_{G(\mathcal{A})}(X)\to G(\mathcal{A})^k$ be the map associating with an element $g\in \St_{G(\mathcal{A})}(X)$ the $k$-tuple $(g_1,\ldots, g_{k})$ consisting of its restriction to the $k$ subtrees rooted at the first level of the tree. Then $G(\mathcal{A})$ is said to be:
\begin{enumerate}
\item \emph{fractal}, if the map $\psi$ is surjective on each factor;
\item \emph{weakly regular branch} over its subgroup $H$, if $H^k\subset\psi(H\cap \St_{G(\mathcal{A})}(X))$, where $H$ is supposed to be nontrivial;
\item \emph{contracting} if there exists a finite set $\mathcal{N}$ of $G(\mathcal{A})$ such that, for every $g\in G(\mathcal{A})$, there exists $n\in \mathbb{N}$ such
that $g\cdot v\in \mathcal{N}$ for all the words $v\in X^{\geq n}$. The minimal set $\mathcal{N}$
with this property is called nucleus.
\end{enumerate}
The reader further interested in these definitions and related properties is referred, for instance, to \cite{fractal_gr_sets}. The action of $ G(\mathcal{A})$ can be naturally extended to the boundary $X^{\infty}$. In this case, an infinite sequence $\xi\in X^{\infty}$ is said to be a singular point for the action of $G(\mathcal{A})$ if the stabilizer of $\xi$ coincides with its neighborhood stabilizer (see \cite{voro}).

The \emph{dual} of $\mathcal{A}$, denoted by $\partial \mathcal{A}$, is the automaton $(X,A,\mu,\lambda)$ having the transition $x\mapright{s|t}y$ whenever $s\mapright{x|y}t$ is a directed arrow in the Moore diagram of $\mathcal{A}$. In the rest of the paper, we will make a large use of the notion of dual automaton, and we will often identify the following maps:
\begin{itemize}
\item $s\cdot x$ in $\mathcal{A}$ is equivalent to $x\circ s$ in $\partial \mathcal{A}$;
\item $s\circ x$ in $\mathcal{A}$ is equivalent to $x\cdot s$ in $\partial \mathcal{A}$.
\end{itemize}
Clearly, we can extend such transformations as for $\mathcal{A}$ and the same equivalences for elements from $X^\ast$ and $A^\ast$ hold.

In the case $\mathcal{A}$ is invertible, one may carry on the action of $\mathcal{A}$ to the disjoint union $\mathcal{A}\sqcup \mathcal{A}^{-1}$ in the obvious way: by adding transitions of the form $t^{-1}\mapright{y|x}s^{-1}$ whenever $s\mapright{x|y}t$ in $\mathcal{A}$. The dual of $\mathcal{A}\sqcup \mathcal{A}^{-1}$ will be called \emph{enriched dual} and will be denoted by $(\partial\mathcal{A})^{-}$ (see Example \ref{ex1}). This has been already used in \cite{freeness} as a tool to determine relations in $G(\mathcal{A})$.

\section{Graph automaton groups and words} \label{sectiontre}

In \cite{articolo0} we introduced the following construction associating an invertible automaton (and so an automaton group) with a given finite graph.

\begin{defn}[Graph automaton group]\label{quartadose}
Let $G=(V,E)$ be a finite graph, with vertex set $V$ and edge set $E$. An orientation of $E$ is a function $\mathcal{E}:E\to V\times V$ with the property that either $\mathcal{E}(e)=(x,y)$ or $\mathcal{E}(e)=(y,x)$ for any $e=\{x,y\}\in E$. Let $\mathcal{A}_{G,\mathcal{E}}=(\mathcal{E}(E) \cup \{id\}, V, \lambda, \mu)$ be the automaton such that:
\begin{itemize}
\item $\mathcal{E}(E) \cup \{id\}$ is the set of states;
\item $V$ is the alphabet;
\item $\lambda: \mathcal{E}(E)\times V\to \mathcal{E}(E) \cup \{id\}$ is the restriction map such that, for each $e=(x,y)\in \mathcal{E}(E)$, one has
$$
\lambda(e,z)= \left\{
                  \begin{array}{ll}
                    e & \hbox{if } z=x \\
                    id & \hbox{if } z\neq x;
                  \end{array}
                \right.
$$
\item $\mu: \mathcal{E}(E)\times V\to V$ is the output map such that, for each $e=(x,y)\in \mathcal{E}(E)$, one has
$$
\mu(e,z) = \left\{
                  \begin{array}{ll}
                    y & \hbox{if } z=x \\
                    x & \hbox{if } z=y \\
                    z & \hbox{if } z\neq x,y.
                  \end{array}
                \right.
$$
\end{itemize}
In words, any oriented edge $e=(x,y)$ is a state of the automaton $\mathcal{A}_{G,\mathcal{E}}$ and it has just one restriction to itself  and all other restrictions to the sink $id$. Its action is nontrivial only on the letters $x$ and $y$, which are switched since $e\circ  x=y$ and $e\circ y=x$. It is easy to check that $\mathcal{A}_{G,\mathcal{E}}$ is invertible. The \emph{graph automaton group} $\mathcal{G}_{G,\mathcal{E}}$ is the automaton group generated by $\mathcal{A}_{G,\mathcal{E}}$.
\end{defn}

Note that the graph automaton group is well defined since it is independent on the orientation. In fact, the change of the orientation of an edge corresponds to consider the inverse of that generator in the group. For this reason, we will use the notation $\mathcal{G}_{G}$ for a graph automaton group. Moreover, the graph automaton group associated with the disjoint union of  graphs is isomorphic to the direct product of the corresponding graph automaton groups (see \cite[Proposition 3.5]{articolo0}): therefore, from now on we will assume $G$ to be connected.
Furthermore, it follows from the definition that the generators associated with two edges of $G=(V,E)$ commute if and only if those edges are not incident, since their action is nontrivial on disjoint subsets of $V$.
In \cite[Theorem 3.7]{articolo0} it is shown that the automaton $\mathcal{A}_{G,\mathcal{E}}$ is bounded, so that the group $\mathcal{G}_G$ is amenable; moreover, whenever $|E|\geq 2$, one has that $\mathcal{G}_G$ is a fractal group and it is weakly regular branch over its commutator subgroup $\mathcal{G}'_G$. In this paper, we will focus on graph automaton (semi)groups associated with a tree $T$. When the orientation $\mathcal{E}$ of the edges of the graph $G=(V,E)$ is fixed, we simply use the notation $\mathcal{A}_{G}$ for the automaton $\mathcal{A}_{G,\mathcal{E}}$.

\begin{example}\label{ex1}\rm
Consider the oriented star graph $S_3$ with vertex set $\{0,1,2,3\}$ and edge set $\{a,b,c\}$, and the associated automaton $\mathcal{A}_{S_3}$ depicted in Fig. \ref{unicafig}.
\begin{figure}[h]
\begin{center}
\psfrag{a}{$a$}\psfrag{b}{$b$}\psfrag{c}{$c$}\psfrag{0}{$0$}\psfrag{1}{$1$}\psfrag{2}{$2$}\psfrag{3}{$3$}
\footnotesize
\psfrag{A}{$a$}\psfrag{B}{$b$}\psfrag{C}{$c$}\psfrag{id}{$id$}\psfrag{ia}{$1|0, 2|2, 3|3$}\psfrag{ib}{$2|0, 1|1, 3|3$}\psfrag{ic}{$3|0, 1|1, 2|2$}\psfrag{ta}{$0|1$}\psfrag{tb}{$0|2$}\psfrag{tc}{$0|3$}
\includegraphics[width=0.80\textwidth]{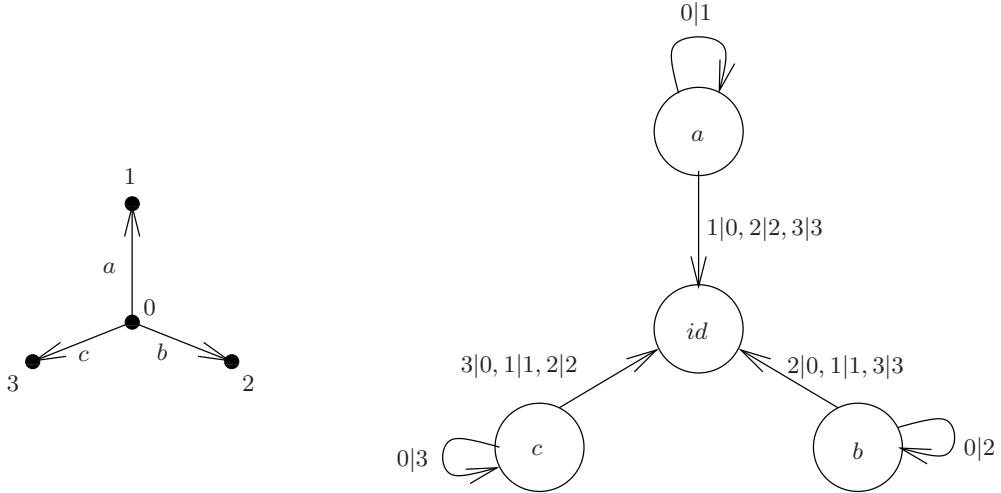}
\end{center}\caption{The oriented star graph $S_3$ and the automaton $\mathcal{A}_{S_3}$.} \label{unicafig}
\end{figure}
The dual automaton $\partial \mathcal{A}_{S_3}$ and its enriched version $(\partial \mathcal{A}_{S_3})^{-}$ are depicted in Fig. \ref{am} and Fig. \ref{aam}, respectively.
\begin{figure}[h]
\begin{center}
\footnotesize
\psfrag{0}{$0$}\psfrag{1}{$1$}\psfrag{2}{$2$}\psfrag{3}{$3$}
\psfrag{a}{$a|id$}\psfrag{b}{$b|id$}\psfrag{c}{$c|id$}\psfrag{aa}{$a|a$}\psfrag{bb}{$b|b$}\psfrag{cc}{$c|c$}
\includegraphics[width=0.50\textwidth]{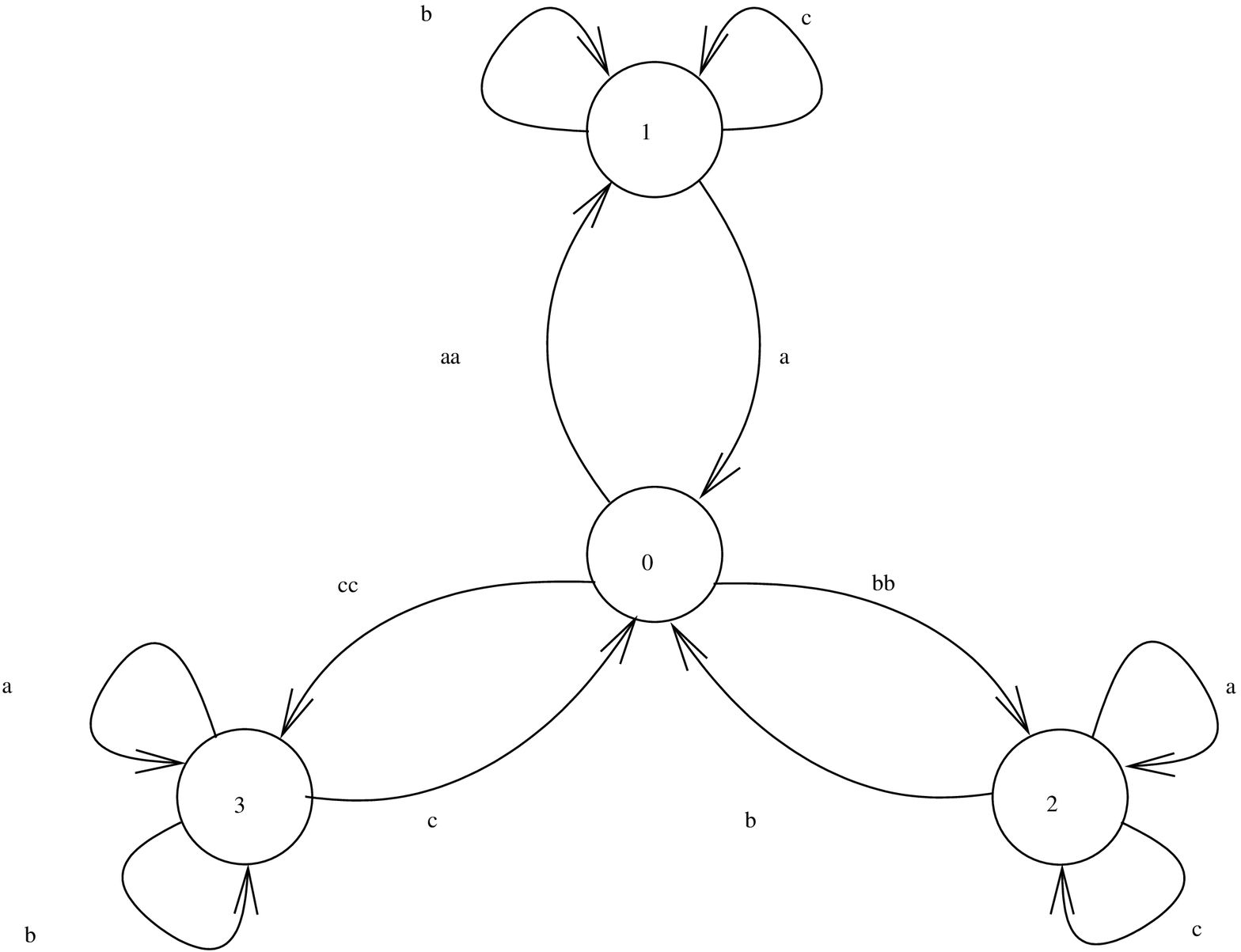}
\end{center}\caption{The dual automaton $\partial \mathcal{A}_{S_3}$.} \label{am}
\end{figure}
\begin{figure}[h]
\begin{center}
\footnotesize
\psfrag{0}{$0$}\psfrag{1}{$1$}\psfrag{2}{$2$}\psfrag{3}{$3$}
\psfrag{A}{$b|id, b^{-1}|id$}\psfrag{B}{$c|id, c^{-1}|id$}\psfrag{C}{$a|a, a^{-1}|id$}
\psfrag{D}{$a|id, a^{-1}|a^{-1}$}\psfrag{E}{$c|c, c^{-1}|id$}\psfrag{F}{$c|id, c^{-1}|c^{-1}$}
\psfrag{G}{$a|id, a^{-1}|id$}\psfrag{H}{$b|id, b^{-1}|id$}\psfrag{I}{$b|b, b^{-1}|id$}
\psfrag{L}{$b|id, b^{-1}|b^{-1}$}\psfrag{M}{$a|id, a^{-1}|id$}\psfrag{N}{$c|id, c^{-1}|id$}
\includegraphics[width=0.55\textwidth]{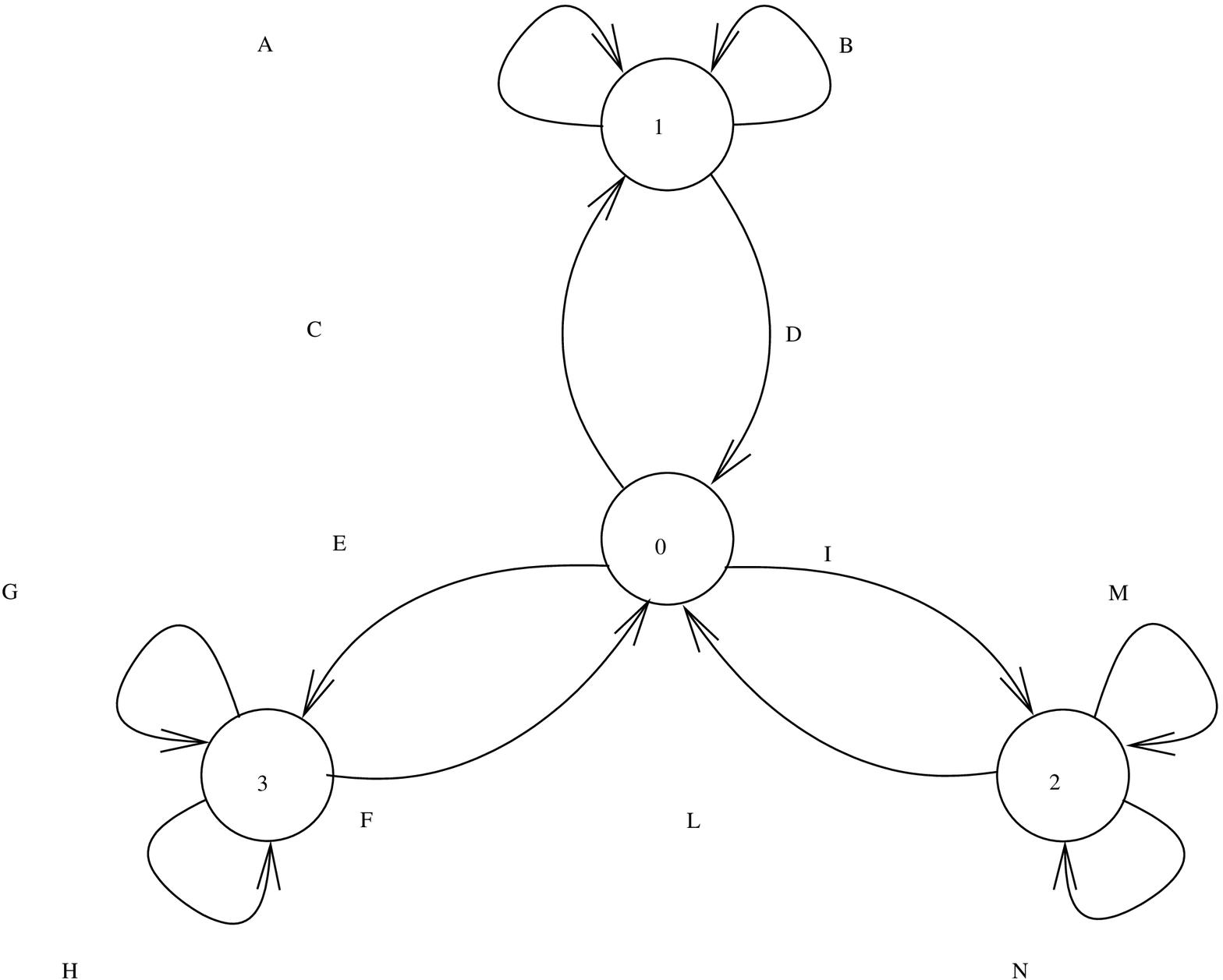}
\end{center}\caption{The enriched dual automaton $(\partial \mathcal{A}_{S_3})^{-}$.} \label{aam}
\end{figure}
\end{example}
\vspace{1cm}

In what follows we introduce some notations that are used throughout the paper. \\ Let $A$ be a finite set and let $F_A$ be the free group generated by $A$, whose elements are all the reduced words in the alphabet $\wt{A}=A\cup A^{-1}$. Let
$$
\theta: \wt{A}^{\ast}\to F_A
$$
be the canonical homomorphism associating with any word in $\wt{A}^{\ast}$ the corresponding reduced word in $F_A$. For an element $u\in F_A$, we use the notation $|u|$ for its length with respect to the generating set $A$.\\ \indent Any finitely generated group $\mathcal{G}$ can be seen as a quotient of $F_A$, for some generating set $A$, and we denote by $\pi:F_A\to \mathcal{G}$ the corresponding canonical epimorphism. In particular, any (nontrivial) relation in $\mathcal{G}$ corresponds to a nontrivial element of $F_A$ that projects onto the identity of $\mathcal{G}$.\\
\indent In our context, the generating set $A$ will be identified with the set $\mathcal{E}(E) \cup \{id\}$. Recall that the maps $\cdot$ and $\circ$ can be defined on any element $w\in F_A$, where the sink $id$ may be identified with the identity $\mathbf{1}$ of the group $F_A$.

\begin{rem}\label{rem_cruciale}\rm
The fractalness property of the graph automaton group associated with a given graph $G=(V,E)$ proven in \cite[Theorem 3.7]{articolo0} actually implies that for any $w\in F_A$ and any $v\in V^n$ there exists $w'\in F_A$, such that $w'$ stabilizes $v$ and $w'\cdot v=w$.
\end{rem}

\section{Algebraic properties of tree automaton groups}\label{sec: some properties}

The aim of the first part of this section is to describe some algebraic properties that hold in general for graph automaton groups; in the second part,
we will focus on graph automaton groups associated with graphs which are trees, that we call \emph{tree automaton groups}.

\begin{prop}\label{prop:quotientsym}
Let $G=(V,E)$ be a graph and let $\mathcal{G}_G$ be the associated graph automaton group. Then $\mathcal{G}_G/\St_{\mathcal{G}_G}(L_1)\simeq Sym(|V|)$.
\end{prop}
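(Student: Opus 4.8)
The plan is to build an explicit isomorphism between the quotient $\mathcal{G}_G/\St_{\mathcal{G}_G}(L_1)$ and $Sym(|V|)$ by analyzing the action of the generators on the first level $L_1$, which is identified with the vertex set $V$. First I would recall that each generator of $\mathcal{G}_G$ comes from an oriented edge $e=(x,y)\in\mathcal{E}(E)$ and, by the definition of $\mathcal{A}_{G,\mathcal{E}}$, its output map $\mu(e,\cdot)$ acts on $V$ by swapping $x$ and $y$ and fixing every other vertex; that is, as a generator of $\mathcal{G}_G$, the element $e$ induces on $L_1$ the transposition $(x\,y)\in Sym(V)$, while $id$ induces the identity. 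This gives a well-defined homomorphism $\rho:\mathcal{G}_G\to Sym(|V|)$ sending $g$ to the permutation it induces on $L_1$, exactly the map $g\mapsto\sigma$ in the self-similar representation $g=(g_1,\dots,g_k)\sigma$ described in Section~\ref{sectiondue}; its kernel is by definition $\St_{\mathcal{G}_G}(L_1)=\St_{\mathcal{G}_G}(X)$.

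Next I would check surjectivity of $\rho$. Since $G$ is assumed connected, its edges — viewed as transpositions on $V$ — generate the whole symmetric group $Sym(V)$: this is the classical fact that the transpositions corresponding to the edges of a connected graph on a finite vertex set generate $Sym(|V|)$ (a spanning tree already suffices). As the image of $\rho$ contains every such transposition $(x\,y)$ with $\{x,y\}\in E$, we get $\rho(\mathcal{G}_G)=Sym(|V|)$. By the first isomorphism theorem, $\mathcal{G}_G/\ker\rho=\mathcal{G}_G/\St_{\mathcal{G}_G}(L_1)\simeq Sym(|V|)$, which is the claim.

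The only genuinely substantive point — and the step I would be most careful about — is verifying that $\rho$ is a homomorphism with the stated kernel, i.e. that the assignment $g\mapsto(\text{action of }g\text{ on }V)$ respects the group operation. This is immediate from the first equation in \eqref{papaakiev}: for $w=s_1\cdots s_n$ one has $w\circ u$ on a letter $u\in V$ computed by successively applying $s_1\circ,\ s_2\circ,\dots$, so the permutation of $V$ induced by a product of generators is the composite of the permutations induced by the factors, and this extends to $F_A$ and hence to $\mathcal{G}_G$ since the action is well defined there. The identification $\St_{\mathcal{G}_G}(L_1)=\ker\rho$ is then just the definition of the level stabilizer together with $\St_{\mathcal{G}_G}(L_1)=\St_{\mathcal{G}_G}(X)$. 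Everything else is bookkeeping, so the proof is short.
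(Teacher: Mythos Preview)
Your proof is correct and follows essentially the same approach as the paper's: both identify the quotient with the image of the first-level action and use connectedness of $G$ to see that the edge-transpositions generate $Sym(|V|)$. The only cosmetic difference is that you invoke the classical fact that edge-transpositions of a connected graph generate the symmetric group, whereas the paper verifies this directly by writing down, for an arbitrary pair $i,j$, an explicit word along a path from $i$ to $j$ that acts as the transposition $(i,j)$.
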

\begin{proof}
In order to prove our claim, we must show that the action of $\mathcal{G}_G$ restricted on $V$ recovers the whole group $Sym(|V|)$. We first observe that, if $G$ has vertex set $V=\{1,\ldots, n\}$ and the edge $e$ connects the vertices $j$ and $k$, then the projection of $e$ to $\mathcal{G}_G/\St_{\mathcal{G}_G}(L_1)$ is the transposition $(j,k)$. Since $Sym(n)$ is generated by its transpositions, if we show that any transposition $(i,j)$, with $i,j\in\{1,\ldots, n\}$  belongs to  $\mathcal{G}_G/\St_{\mathcal{G}_G}(L_1)$, then we are done. Let $i,j$ be vertices in $V$, and let $e_{i_1}^{\epsilon_1}\cdots e_{i_k}^{\epsilon_k}$, with $\epsilon_h\in\{1,-1\}$ be an oriented path from $i$ to $j$ in $G$. Then an explicit computation shows that the element
\[
e_{i_1}^{\epsilon_1}\cdots e_{i_k}^{\epsilon_k}(e_{i_1}^{\epsilon_1}\cdots e_{i_{k-1}}^{\epsilon_{k-1}})^{-1}
\]
acts on $V \equiv L_1$ like the transposition $(i,j)$. The claim follows.
\end{proof}

\begin{cor}\label{coro: non-solvable}
Let $G=(V,E)$ be a graph with $|V|=n\ge 5$. Then the graph automaton group $\mathcal{G}_G$ is not solvable.
\end{cor}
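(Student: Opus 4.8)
The plan is to reduce the statement to the classical non-solvability of the symmetric group $Sym(n)$ for $n\ge 5$, using Proposition \ref{prop:quotientsym}. First I would invoke that proposition to obtain a surjective homomorphism
$$
\mathcal{G}_G \twoheadrightarrow \mathcal{G}_G/\St_{\mathcal{G}_G}(L_1)\simeq Sym(n).
$$
Then I would recall the standard fact that the class of solvable groups is closed under quotients: if $N\trianglelefteq H$ with $H$ solvable, then the derived series of $H/N$ is the image under the projection of the derived series of $H$, hence it eventually reaches the trivial subgroup. Consequently, if $\mathcal{G}_G$ were solvable, then so would be $Sym(n)$.

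To finish, I would observe that $Sym(n)$ is not solvable for $n\ge 5$. The cleanest way is to note that the derived subgroup of $Sym(n)$ is the alternating group $A_n$, and that for $n\ge 5$ the group $A_n$ is non-abelian and simple; thus its derived subgroup equals $A_n$ itself, so the derived series of $Sym(n)$ stabilizes at $A_n\ne\mathbf{1}$ and never terminates at the trivial subgroup. This contradicts the assumed solvability of $\mathcal{G}_G$, and the corollary follows.

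There is essentially no obstacle here: all the content is carried by Proposition \ref{prop:quotientsym}, whose proof already uses that $G$ is connected (our standing assumption since the end of Section \ref{sectiontre}) in order to produce an oriented path between any two prescribed vertices, and hence to realize every transposition — and therefore all of $Sym(n)$ — in the level-one quotient. The only point requiring attention is that the hypothesis $n\ge 5$ is used precisely where it is needed, namely to guarantee the simplicity and non-abelianness of $A_n$; for $n\le 4$ the symmetric group is solvable and the argument genuinely breaks down.
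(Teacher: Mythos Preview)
Your proposal is correct and follows essentially the same approach as the paper: both invoke Proposition~\ref{prop:quotientsym} to obtain $Sym(n)$ as a quotient of $\mathcal{G}_G$, and then conclude from the non-solvability of $Sym(n)$ for $n\ge 5$. Your write-up simply makes explicit the closure of solvability under quotients and the reason $Sym(n)$ is not solvable, which the paper leaves as known facts.
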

\begin{proof}
The result follows from Proposition \ref{prop:quotientsym}, since the symmetric group $Sym(n)$ is not solvable for $n\geq 5$.
\end{proof}
The following proposition holds in the general frame of automaton groups.

\begin{prop}\label{centro}
Let $\mathcal{G}$ be a fractal automaton group such that $\mathcal{G}/\St_{\mathcal{G}}(L_1)$ has trivial center. Then the center $Z(\mathcal{G})$ is trivial.
\end{prop}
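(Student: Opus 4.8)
The plan is to show that any central element $g\in Z(\mathcal{G})$ must fix every level of the rooted tree $T_k$, and hence is trivial, since $\mathcal{G}=G(\mathcal{A})$ acts faithfully on $X^\ast$ by construction. The mechanism of the argument is that, for a \emph{fractal} group, centrality is inherited by the restrictions $g_i$, so one can descend along the tree.

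First I would take $g\in Z(\mathcal{G})$ and look at its image $\bar g$ in the quotient $\mathcal{G}/\St_{\mathcal{G}}(L_1)$: it is central, and the standing hypothesis that this quotient has trivial center forces $\bar g=\mathbf 1$, i.e. $g\in\St_{\mathcal{G}}(L_1)$, so in the self-similar representation $g=(g_1,\dots,g_k)$ (with $k=|X|$) the permutation part is trivial. Next, fixing $i\in\{1,\dots,k\}$ and an arbitrary $h\in\mathcal{G}$, I would use fractalness: the $i$-th component of $\psi$ is onto, so there is $\tilde h=(\tilde h_1,\dots,\tilde h_k)\in\St_{\mathcal{G}}(L_1)$ with $\tilde h_i=h$. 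Comparing the $i$-th coordinates in the identity $g\tilde h=\tilde h g$ gives $g_i h=h g_i$, and since $h$ was arbitrary, $g_i\in Z(\mathcal{G})$ for every $i$.

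I would then iterate this down the tree: by induction on $n$, for every $v\in X^n$ one has $g\circ v=v$ and $g\cdot v\in Z(\mathcal{G})$. The case $n=0$ is immediate. For the inductive step, apply the previous paragraph to $g\cdot v\in Z(\mathcal{G})$: it lies in $\St_{\mathcal{G}}(L_1)$, so $(g\cdot v)\circ x=x$ for every $x\in X$, and $g\cdot(vx)=(g\cdot v)\cdot x\in Z(\mathcal{G})$; combining this with $g\circ(vx)=(g\circ v)\bigl((g\cdot v)\circ x\bigr)=vx$ shows $g$ fixes $vx$ as well. Hence $g$ fixes $X^n$ pointwise for all $n$, so $g$ is the identity transformation of $X^\ast$, i.e. $g=\mathbf 1$, and $Z(\mathcal{G})$ is trivial.

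The only genuinely delicate point is that centrality really does pass to the restrictions: this is exactly where fractalness is indispensable, since without surjectivity of each component of $\psi$ the relation $g\tilde h=\tilde h g$ would only give commutation of $g_i$ with restrictions of stabilizer elements, not with all of $\mathcal{G}$, and then the hypothesis on the quotient alone would not close the argument. Everything else is routine bookkeeping with the self-similar representation and the cocycle identities for the maps $\circ$ and $\cdot$.
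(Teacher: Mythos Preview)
Your proof is correct. It differs from the paper's argument mainly in organization: the paper proceeds by contradiction, locating the minimal level $k$ at which a putative nontrivial central element $h$ acts nontrivially, and then invokes fractalness at depth $k-1$ to manufacture a single element $g\in\St_{\mathcal{G}}(v)$ whose restriction at $v$ realizes a permutation $\tau$ not commuting with the permutation $\sigma$ of $h'=h\cdot v$, yielding $(hg)\circ vx\neq (gh)\circ vx$. Your argument instead proves the cleaner intermediate statement that centrality is inherited by \emph{all} first-level restrictions (using only the first-level surjectivity in the definition of fractal), and then iterates this down the tree so that every $g\cdot v$ lies in $Z(\mathcal{G})\cap\St_{\mathcal{G}}(L_1)$, forcing $g$ to fix $X^\ast$ pointwise. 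Both routes rest on the same two ingredients --- the quotient hypothesis to land in the stabilizer, and fractalness to control restrictions --- but yours avoids the implicit appeal to fractalness at arbitrary depth (which follows from the definition by a short induction, but is not stated), at the price of a slightly longer explicit induction. Either way the argument is short; your version has the mild advantage of isolating the reusable fact ``$g\in Z(\mathcal{G})\Rightarrow g_i\in Z(\mathcal{G})$ for all $i$''.
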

\begin{proof}
Suppose by contradiction that $Z(\mathcal{G})$ contains a nontrivial element, that is, there exists a nontrivial $h\in\mathcal{G}$ such that $hg=gh$ for any $g\in \mathcal{G}$. Since $h$ is nontrivial, there exists a minimal $k$ such that $h$ acts nontrivially on the $k$-th level. This means that there exist $h'\in \mathcal{G}$ and $v\in X^{k-1}$ such that $h\circ v=v$ and $h\cdot v=h'$, where the action of $h'$ on $X$ is given by some permutation $\sigma\neq \mathbf{1}$. Since $\mathcal{G}/\St_{\mathcal{G}}(L_1)$ has trivial center, there exists $\tau\in \mathcal{G}/\St_{\mathcal{G}}(L_1)$ such that $\sigma\tau\neq \tau\sigma$. In particular, there exists $g'\in \mathcal{G}$ such that $g'$ acts like the permutation $\tau$ on $X$. Since $\mathcal{G}$ is fractal, there exists $g\in\mathcal{G}$ such that $g\in \St_{\mathcal{G}}(v)$ and $g\cdot v=g'$. Now
$$
(hg)\circ vx= g\circ (v (h'\circ x)) = v (h' g') \circ x = v\tau\sigma(x)
$$
and similarly $(gh)\circ vx=v\sigma\tau(x)$. In particular, for any $x\in X$, one must have $\tau\sigma(x)=\sigma\tau(x)$. But this is impossible.
\end{proof}

Notice that, if the graph $G=(V,E)$ has only two vertices, then the associated graph automaton group is the so-called Adding Machine, and it is isomorphic to $\mathbb{Z}$ (see \cite[Example 3.6, Part 1]{articolo0}).

\begin{cor}\label{tree}
Let $G=(V,E)$ be a graph on $n\geq 3$ vertices and let $\mathcal{G}_G$ be the corresponding graph automaton group. Then $Z(\mathcal{G}_G)$ is trivial.
\end{cor}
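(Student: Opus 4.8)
The plan is to obtain this as a direct application of Proposition~\ref{centro}, so that the whole task reduces to checking that the two hypotheses of that proposition hold for $\mathcal{G}_G$. For the first hypothesis, I would recall that $G$ is assumed connected, so a graph on $n=|V|\geq 3$ vertices has $|E|\geq n-1\geq 2$ edges; by \cite[Theorem~3.7]{articolo0} this guarantees that $\mathcal{G}_G$ is a fractal automaton group. For the second hypothesis, Proposition~\ref{prop:quotientsym} identifies the quotient $\mathcal{G}_G/\St_{\mathcal{G}_G}(L_1)$ with the symmetric group $Sym(n)$, and it is classical that $Z(Sym(n))$ is trivial for every $n\geq 3$. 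Feeding these two facts into Proposition~\ref{centro} immediately yields $Z(\mathcal{G}_G)=\{\mathbf{1}\}$.

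The only point worth flagging is the sharpness of the bound $n\geq 3$: for $n=2$ one has $Sym(2)\cong \mathbb{Z}/2\mathbb{Z}$, whose center is the whole group, so Proposition~\ref{centro} does not apply — and indeed in that case $\mathcal{G}_G$ is the Adding Machine, which is isomorphic to $\mathbb{Z}$ and hence abelian, so the conclusion genuinely fails. For $n\geq 3$ there is no such degeneracy, and I do not anticipate any real obstacle here: all the substantive work has already been carried out in Propositions~\ref{prop:quotientsym} and~\ref{centro} together with the fractalness statement imported from \cite{articolo0}. One could optionally add a remark that the argument goes through verbatim for any fixed orientation $\mathcal{E}$, consistently with the fact that $\mathcal{G}_G$ is independent of $\mathcal{E}$.
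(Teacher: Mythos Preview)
Your proposal is correct and follows essentially the same route as the paper's proof: the paper likewise invokes fractalness (via \cite[Theorem~3.7]{articolo0}), identifies the first-level quotient with $Sym(n)$ through Proposition~\ref{prop:quotientsym}, notes that $Z(Sym(n))$ is trivial for $n\ge 3$, and then applies Proposition~\ref{centro}. Your added justification that $|E|\ge 2$ and the remark on the boundary case $n=2$ are welcome clarifications but do not alter the underlying argument.
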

\begin{proof}
We know that $\mathcal{G}_G$ is fractal and in Proposition~\ref{prop:quotientsym} we have already proven that the quotient $\mathcal{G}_G/\St_{\mathcal{G}_G}(L_1)$ is isomorphic to $Sym(n)$. It is well known that $Z(Sym(n))$ is trivial for $n\geq 3$. The claim follows from Proposition \ref{centro}.
\end{proof}

From now on, we consider a graph automaton group obtained by applying the construction of Definition \ref{quartadose} to a graph $T=(V,E)$ which is a tree. We denote by $\mathcal{T}$ the associated automaton, and by  $\mathcal{G}_T$ the corresponding tree automaton group. The arboreal structure of the $T$ will be reflected on the algebraic properties of $\mathcal{G}_T$, since the dual automaton $\partial \mathcal{T}$ resembles a tree.  Indeed, if in the underlying graph of $\partial \mathcal{T}$ we forget the orientation, the loops and the multiedges, we get a simple graph which is isomorphic to the tree $T$ (see, for instance, Fig. \ref{am} for the case where $T$ is the star graph on $4$ vertices). This fact will be heavily used henceforth.

Although the state set of the automaton $\mathcal{T}$ is $\mathcal{E}(E) \cup \{id\}$, we will refer to $E$ as the generating set, since the sink $id$ is identified with the identity element of the free group $F_{\mathcal{E}(E) \cup \{id\}}$; therefore, with a slight abuse of notation, we will write $F_E$. Given an element $w\in F_E$ and a word $u\in V^\ast$, if $w\cdot u$ (or $u\circ w$ in $\partial \mathcal{T}$) contains some occurrences of $id$, we will identify such occurrences with $\mathbf{1}$.
\begin{lemma}\label{lem: loop reduces}
Let $T=(V,E)$ be a tree. Let $w\in F_E$ be a nontrivial element and  let $x\in V$ be a vertex such that $w\circ x=x$. Then $|w\cdot x|<|w|$.
\end{lemma}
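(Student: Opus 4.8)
The plan is to compute the restriction $w\cdot x$ explicitly from a reduced word for $w$, and then to read off a length bound from the fact that $T$ is a tree.

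First I would write $w=s_1\cdots s_n$ as its reduced word over $\wt{E}$ ($=E\cup E^{-1}$), so that $n=|w|\ge 1$, and set $x_0=x$ and $x_i=s_i\circ x_{i-1}$, so that the hypothesis becomes $x_n=x_0$. Iterating the composition formula~\eqref{papaakiev} yields $w\cdot x=(s_n\cdot x_{n-1})(s_{n-1}\cdot x_{n-2})\cdots(s_1\cdot x_0)$. Here I would use the elementary fact, read off from Definition~\ref{quartadose} (extended to $\mathcal A\sqcup\mathcal A^{-1}$), that every $g\in\wt{E}$, regarded as an oriented edge (reversing orientation corresponds to inversion), has a distinguished vertex $\ell(g)$, namely its tail, for which $g\cdot\ell(g)=g$ while $g\cdot y=id$ whenever $y\neq\ell(g)$, and that $g$ acts on $V$ as the transposition of the two endpoints of the underlying unoriented edge $\bar g$. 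Identifying $id$ with $\mathbf 1$, this shows that $w\cdot x$ is the word obtained from $s_n s_{n-1}\cdots s_1$ by deleting every $s_i$ with $x_{i-1}\neq\ell(s_i)$; in particular $|w\cdot x|\le |I|$, where $I=\{\,i:x_{i-1}=\ell(s_i)\,\}\subseteq\{1,\dots,n\}$. So it suffices to prove that $I$ is a \emph{proper} subset of $\{1,\dots,n\}$.

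The core of the argument is then to rule out $I=\{1,\dots,n\}$. If that were the case, each $x_{i-1}=\ell(s_i)$ would be an endpoint of $\bar s_i$ and $x_i=s_i\circ x_{i-1}$ its other endpoint, so $x_0,x_1,\dots,x_n$ would be a walk in $T$ traversing the edges $\bar s_1,\dots,\bar s_n$, every step moving to an adjacent (hence distinct) vertex, and closed since $x_n=x_0$. I would then invoke that a closed walk of positive length in a tree must backtrack: taking $i^\ast$ with $d_T(x_0,x_{i^\ast})$ maximal — which is an interior index because $x_1\neq x_0$ forces this maximum to be positive — both neighbours $x_{i^\ast-1}$ and $x_{i^\ast+1}$ of $x_{i^\ast}$ lie at tree-distance $d_T(x_0,x_{i^\ast})-1$ from $x_0$, and a tree has a unique such neighbour, whence $x_{i^\ast-1}=x_{i^\ast+1}$. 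Therefore $\bar s_{i^\ast}=\bar s_{i^\ast+1}$ is a single edge, while $\ell(s_{i^\ast})=x_{i^\ast-1}$ is the tail of $s_{i^\ast}$ and $\ell(s_{i^\ast+1})=x_{i^\ast}$ is the tail of $s_{i^\ast+1}$, and these are the two \emph{distinct} endpoints of that edge; hence $s_{i^\ast}$ and $s_{i^\ast+1}$ are the two opposite orientations of it, i.e.\ $s_{i^\ast+1}=s_{i^\ast}^{-1}$, contradicting the reducedness of $s_1\cdots s_n$. Thus $|w\cdot x|\le |I|\le n-1<n=|w|$.

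The only ingredient that is not a direct unwinding of the definitions is the combinatorial lemma that a closed walk of positive length in a tree contains a backtracking step, and I expect that to be the (mild) obstacle; the distance-maximization sketch above settles it. Small cases need no separate treatment — for instance when $n=1$ the hypothesis $w\circ x=x$ already forbids $x$ from being an endpoint of $\bar s_1$, so $1\notin I$ and $w\cdot x=\mathbf 1$.
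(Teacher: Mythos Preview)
Your proof is correct and takes a genuinely different route from the paper's. The paper argues by induction on $|w|$: it peels off the first letter $a$ of $w$, and in the nontrivial case where $a$ moves $x$ to a neighbour $y$ it looks for the first proper prefix of the remaining word that returns to $y$ (applying the inductive hypothesis there), or, if none exists, uses the tree structure of $\partial\mathcal T$ to force $w$ to begin with $a^2$ and computes $a^2\cdot x=a$ directly.

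Your argument is global rather than inductive: you expand $w\cdot x$ completely via~\eqref{papaakiev}, observe that only the letters $s_i$ with $x_{i-1}=\ell(s_i)$ survive, and then show that not every letter can survive because that would produce a non-backtracking closed walk in $T$, which you rule out by the distance-maximization trick. This makes the role of the tree hypothesis very transparent --- it is used exactly once, as the statement that closed walks in trees backtrack --- whereas in the paper's proof the tree-like shape of $\partial\mathcal T$ is invoked more implicitly inside the case analysis. The paper's inductive approach, on the other hand, stays closer to the dual-automaton picture that is reused throughout Section~\ref{sec: semigroups}, so it dovetails with the later noose arguments; your approach is self-contained and slightly sharper in that it identifies precisely which letters of $w$ appear in $w\cdot x$.
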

\begin{proof}
Let us prove the statement by induction on the length $|w|$. The base case $|w|=1$ is trivial, since in this case in the dual $\partial\mathcal{T}$ $w$ is a loop corresponding to an edge of $T$ not adjacent to $x$, hence $w\cdot x= \mathbf{1}$. Let us assume the statement to be true for all words of length strictly smaller than $|w|$. Let $w=aw'$, with $a\in E$. We have two cases:
\begin{itemize}
\item $a\circ x=x$, and so in this case we have that $a\cdot x=\mathbf{1}$ and so the statement holds since $|w\cdot x|<|w|$.
\item $a\circ x=y$ with $y\neq x$. Without loss of generality, we can suppose that $a=(x,y)$ is an oriented edge of $T$. This implies that $w'\circ y = x$, since $w=aw'$ fixes $x$ by hypothesis. Now, if there exists a nonempty prefix $u$ of $w'$ such that $u\circ y=y$, with $u'=u\cdot y$, then, by the induction hypothesis, we have $|u'|=|u\cdot y|<|u|$. Therefore, we get
\begin{eqnarray*}
w\cdot x&=& (aw')\cdot x = (auv)\cdot x = (a\cdot x)(u\cdot (a\circ x))(v \cdot (au\circ x))\\
 &=& (a\cdot x)(u\cdot y) u''= (a\cdot x)u'u''
\end{eqnarray*}
for some word $u''$, hence $|w\cdot x|<|w|$. If there is no such a nonempty prefix $u$, then by the tree-like structure of $\partial\mathcal{T}$, necessarily the word $w$ starts with $a^2$, since $w$ is supposed to be reduced. Since $a^2\cdot x=a$, we certainly have $|w\cdot x|<|w|$ and the proof is concluded.
\end{itemize}
\end{proof}
The previous result allows us to obtain the following property about relations in a graph automaton group.

\begin{theorem}\label{relazioni}
Let $G=(V,E)$ be an (oriented) graph and let $w\in F_E$ a relation in $\mathcal{G}_G$, so that $\pi(w)=\mathbf{1}$, such that the generators occurring in $w$ constitute a subset $\{e_1,\ldots,e_t\}$ of edges whose induced subgraph in $G$ does not contain any cycle. Then the sum of the exponents of each of the $e_i$'s in $w$ is zero.
\end{theorem}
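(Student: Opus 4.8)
The plan is to fix one generator $e\in\{e_1,\dots,e_t\}$ and to prove that the exponent-sum homomorphism $\epsilon_e\colon F_E\to\mathbb Z$, which sends $e$ to $1$ and every other edge of $G$ to $0$, vanishes on $w$; I would do this by induction on the length $|w|$ of $w$ as a reduced element of $F_E$. If $w=\mathbf 1$ there is nothing to prove, so assume $w$ is a nontrivial relation. Since $\pi(w)=\mathbf 1$, the element $w$ fixes every letter $x\in V$, and then every restriction $w\cdot x$ is again a relation: from $w\circ(xu)=(w\circ x)\bigl((w\cdot x)\circ u\bigr)=xu$ one reads off $(w\cdot x)\circ u=u$ for all $u\in V^\ast$. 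As each generator restricts either to itself or to $id$, the generators occurring in $w\cdot x$ form a subset of those occurring in $w$ and hence still span an acyclic subgraph. Because those generators span a forest, the computation of $w\cdot x$ takes place inside the connected component containing $x$ (a tree), the remaining components acting trivially on $x$; so Lemma \ref{lem: loop reduces} applies and gives $|w\cdot x|<|w|$ (and $w\cdot x=\mathbf 1$ if $x$ lies in no such component). Thus the induction hypothesis yields $\epsilon_e(w\cdot x)=0$ for every $x\in V$.

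The heart of the matter is then the identity
\[
\epsilon_e(w)=\sum_{x\in V}\epsilon_e(w\cdot x).
\]
To establish it I would write $w=s_1\cdots s_n$ with $s_j\in\widetilde E$ and, for $x\in V$, set $x^{(1)}=x$ and $x^{(j+1)}=s_j\circ x^{(j)}$; unfolding \eqref{papaakiev} gives $w\cdot x=(s_n\cdot x^{(n)})\cdots(s_1\cdot x^{(1)})$ up to free reduction. Since $\epsilon_e$ is additive on concatenations and invariant under reduction, $\epsilon_e(w\cdot x)=\sum_{j=1}^n\epsilon_e(s_j\cdot x^{(j)})$, and by the definition of the restriction map $s_j\cdot x^{(j)}$ equals $s_j$ when $x^{(j)}$ is the unique letter $p$ with $s_j\cdot p\neq id$, and equals $id$ otherwise; thus $\epsilon_e(s_j\cdot x^{(j)})$ contributes $\pm1$ precisely when $s_j=e^{\pm1}$ and $x^{(j)}=p$. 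Now for each fixed $j$ the map $x\mapsto x^{(j)}=(s_1\cdots s_{j-1})\circ x$ is a permutation of $V=L_1$, so when we sum over $x\in V$ every occurrence of $e^{\pm1}$ among the $s_j$ is counted exactly once with its sign; hence $\sum_{x\in V}\epsilon_e(w\cdot x)$ equals the signed number of occurrences of $e$ in $w$, i.e.\ $\epsilon_e(w)$. Combining this with the vanishing from the previous paragraph gives $\epsilon_e(w)=0$, and since $e$ was an arbitrary generator occurring in $w$, the theorem follows.

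I expect the only delicate point to be the invocation of Lemma \ref{lem: loop reduces}, which is phrased for a tree: one must check that the acyclicity hypothesis on the support of $w$ really does place the computation of each $w\cdot x$ inside a genuine tree, so that the length strictly drops — this is exactly where the hypothesis is used, since without it the induction need not terminate. Everything else is routine bookkeeping with the self-similar recursion \eqref{papaakiev} (taking care of orientations and inverses when identifying the tail $p$ of each $s_j$) together with the observation that restrictions of a relation are again relations whose generating support is contained in that of $w$.
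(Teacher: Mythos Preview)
Your proof is correct and follows essentially the same approach as the paper: induction on $|w|$, using Lemma~\ref{lem: loop reduces} to make the restrictions $w\cdot x$ strictly shorter, and then the key observation that each occurrence of $e^{\pm1}$ in $w$ survives in exactly one restriction $w\cdot x$. Your permutation argument for the identity $\epsilon_e(w)=\sum_{x\in V}\epsilon_e(w\cdot x)$ is just a more explicit version of the paper's one-line remark that ``any occurrence of $e_i^{\pm 1}$ appearing in $w$ appears in exactly one restriction $w_y$''; the only cosmetic difference is that the paper first reduces to a connected tree support (via commutators of disjoint components) before invoking Lemma~\ref{lem: loop reduces}, whereas you handle the forest case directly by noting that components not containing $x$ contribute only $id$ to $w\cdot x$.
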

\begin{proof}
First of all we notice that the statement is invariant under the orientation of the $e_i$'s. Now remark that generators belonging to disjoint subgraphs of $G$ act on disjoint alphabets and so they give rise to a trivial commutator, which is a relation that satisfies our claim.
This implies that we can restrict our attention to relations $w$ involving edges whose induced subgraph in $G$ is a tree $T$ with edge set $E=\{e_1,\ldots,e_t\}$.\\
\indent Let $w\in F_E$ be such a relation of $\mathcal{G}_G$ and let $w=(w_1,\ldots, w_n)$ be its self-similar representation, where $n$ is the cardinality of the vertex set $V$ of $G$.
Let us prove the statement by induction on $|w|$.
It is easy to show that the shortest relations are given by commutators of nonadjacent edges, which have the claimed property.
If $w$ is a relation of length greater than $4$, then  for any $x\in V$ one has $w\circ x=x$ and so by Lemma \ref{lem: loop reduces} $|w_x|=|w\cdot x|<|w|$. Since each of the $w_x$'s is a relation, being a restriction of a relation, by inductive hypothesis each of the $w_x$'s has the property that  the sum of the exponents of each of the $e_i$'s is zero. Finally, observe that in each graph automaton group, any occurrence of $e_i^{\pm 1}$ appearing in $w$, appears in exactly one restriction $w_y$, for some $y$. This concludes the proof.
\end{proof}
Recall that the set of words of $F_E$ such that the sum of the exponents of each generator is zero coincides with the commutator subgroup $F_E'$. Then, by Theorem \ref{relazioni}, the sum of the exponents of each of the $e_i$'s in any relation of a tree automaton group $\mathcal{G}_T$ is zero, and so every relation in $\mathcal{G}_T$ belongs to $F_E'$. The following corollary holds.
\begin{cor}\label{derivato}
Let $T$ be a tree with edge set $E=\{e_1,\ldots,e_t\}$. Then
$$
\mathcal{G}_T/\mathcal{G}_T'\simeq \mathbb{Z}^{t}.
$$
\end{cor}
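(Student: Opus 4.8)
The plan is to read off the abelianization directly from the structure of the relations, which was completely pinned down by Theorem~\ref{relazioni}. Write $\mathcal{G}_T = F_E/N$, where $N\trianglelefteq F_E$ is the normal closure of the set of all relations holding among the generators $e_1,\dots,e_t$. The first step is to observe that, by Theorem~\ref{relazioni}, every relation $w$ of $\mathcal{G}_T$ has the property that the exponent sum of each $e_i$ in $w$ is zero; as recalled just before the corollary, this says exactly that $w\in F_E'$. Since this holds for every generator of $N$ and $F_E'$ is a subgroup, we conclude $N\subseteq F_E'$.

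The second step is the routine quotient computation. One has
\[
\mathcal{G}_T/\mathcal{G}_T' \;=\; (F_E/N)\big/(F_E/N)' \;=\; F_E\big/ (F_E'N),
\]
using that the commutator subgroup of $F_E/N$ is the image of $F_E'$, i.e. $F_E'N/N$. By the inclusion $N\subseteq F_E'$ established above, $F_E'N = F_E'$, hence $\mathcal{G}_T/\mathcal{G}_T' \simeq F_E/F_E'$. Finally, $F_E$ is free of rank $t=|E|$, so $F_E/F_E'\simeq \mathbb{Z}^{t}$, and the isomorphism $\mathcal{G}_T/\mathcal{G}_T'\simeq\mathbb{Z}^t$ follows.

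There is essentially no obstacle here: the whole content of the corollary is carried by Theorem~\ref{relazioni}, and the remaining argument is a standard fact about quotients (if $N$ lies inside the commutator subgroup, it does not affect the abelianization). If one wanted to be slightly more careful one could also remark that the natural map $\mathbb{Z}^t \to \mathcal{G}_T/\mathcal{G}_T'$ induced by $e_i\mapsto \overline{e_i}$ is surjective (the $\overline{e_i}$ generate) and injective (its kernel would be a relation with some nonzero exponent sum, contradicting Theorem~\ref{relazioni}), but this is just the same computation rephrased.
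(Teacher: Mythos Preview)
Your proof is correct and follows essentially the same approach as the paper: both rely on Theorem~\ref{relazioni} to conclude that the kernel $N$ of $\pi:F_E\to\mathcal{G}_T$ lies in $F_E'$, and then deduce the abelianization is $\mathbb{Z}^t$. Your formulation via the identity $(F_E/N)/(F_E/N)'\simeq F_E/F_E'N$ is slightly slicker, while the paper phrases it as the injectivity-by-contradiction argument you mention in your final paragraph; the content is identical.
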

\begin{proof}
Let $\pi: F_E \to \mathcal{G}_T$. We have to show that $ \pi(e_1^{i_1}\cdots e_t^{i_t})\not\in \mathcal{G}_T'$ for any $(i_1, \ldots, i_t)\neq (0,\ldots, 0)$. Suppose by contradiction that there exists $(i_1, \ldots, i_t)\neq (0,\ldots, 0)$ such that $ \pi(e_1^{i_1}\cdots e_t^{i_t})=g\in \mathcal{G}_T'$. Let $w\in F_E'$ be a word representing $g$. The word $u=w^{-1} e_1^{i_1}\cdots e_t^{i_t}$ is a relation in $\mathcal{G}_T$. Thus, by Corollary~\ref{cor: zero sum}, in $u$ the sum of the exponents of each of the $e_i$'s is equal to zero. Now, since $w\in F_E'$ has also the same property, we deduce that $(i_1,\ldots, i_t)=(0,\ldots, 0)$, a contradiction.
\end{proof}
Moreover, Theorem \ref{relazioni} allows us to easily describe a family of nontorsion elements in $\mathcal{G}_T$.

\begin{cor}\label{cor: zero sum}
Let $T=(V,E)$ be a tree. Then any  word $w\in F_E$ such that the sum of the exponents of some generator appearing in $w$ is nonzero is not torsion.
\end{cor}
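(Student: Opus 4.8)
The plan is to deduce the statement directly from Theorem~\ref{relazioni}. The key observation is that, when the graph is a tree $T=(V,E)$, every subset of edges induces an acyclic subgraph of $T$; hence the hypothesis of Theorem~\ref{relazioni} is automatically satisfied by the set of generators occurring in \emph{any} element of $F_E$. Consequently, Theorem~\ref{relazioni} specializes to: every relation $w\in F_E$ of $\mathcal{G}_T$, i.e.\ every $w$ with $\pi(w)=\mathbf{1}$, has the property that the sum of the exponents of each generator occurring in $w$ is zero.

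Next I would record the elementary fact that, for each fixed $e\in E$, the map $\sigma_e\colon F_E\to\mathbb{Z}$ assigning to a word the sum of the exponents of $e$ in it is a group homomorphism (it is well defined on $F_E$ since it factors through the abelianization of $F_E$, and in particular is unaffected by free reduction). Thus $\sigma_e(w^n)=n\,\sigma_e(w)$ for every $n\in\mathbb{Z}$.

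Now suppose $w\in F_E$ satisfies $\sigma_e(w)=m\neq 0$ for some generator $e$ appearing in $w$, and assume for contradiction that $\pi(w)\in\mathcal{G}_T$ is a torsion element, say of order $n\geq 1$, so that $\pi(w^n)=\mathbf{1}$ and hence $w^n$ is a relation of $\mathcal{G}_T$. By the observation of the first paragraph, the exponent sum of every generator in $w^n$ must vanish; but $\sigma_e(w^n)=nm\neq 0$ because $n\geq 1$ and $m\neq 0$, a contradiction. Therefore $\pi(w)$ has infinite order, that is, it is not torsion.

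There is essentially no serious obstacle in this argument: it is a one-line corollary once the acyclicity hypothesis of Theorem~\ref{relazioni} is seen to be vacuous for trees. The only points requiring a word of care are that the exponent-sum map is genuinely invariant under free reduction (so it is well defined on $F_E$) and that ``not torsion'' is to be read as ``of infinite order'', which is precisely what the computation with the powers $w^n$, $n\geq 1$, yields.
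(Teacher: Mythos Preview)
Your proof is correct and follows essentially the same approach as the paper: both arguments use Theorem~\ref{relazioni} (specialized to the tree case, where the acyclicity hypothesis is automatic) to conclude that every relation has zero exponent sum in each generator, and then observe that the exponent sum of $w^n$ is $n$ times that of $w$. Your exposition is more detailed, but the underlying idea is identical.
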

\begin{proof}
The last statement follows from the observation that the sum of the exponents in $w^n$ is $n$ times the sum of the exponents in $w$.
\end{proof}

\section{Tree automaton semigroups}\label{sec: semigroups}
In this section we study the automaton semigroup associated with a tree.

\begin{defn}[Graph automaton semigroup]\label{quintadose}
Let $G=(V,E)$ be as in Definition \ref{quartadose} with an orientation $\mathcal{E}$ of its edges. Let $\mathcal{A}_{G,\mathcal{E}}$ be the associated automaton. The \emph{graph automaton semigroup} $\mathcal{S}_{G,\mathcal{E}}$ is the semigroup generated by the automaton $\mathcal{A}_{G,\mathcal{E}}$.
\end{defn}
While for graph automaton groups the orientation of the graph is irrelevant, different orientations of the graph may give rise to non isomorphic semigroups, as we will later observe in Remark \ref{interesse}. In this section we show that, in the case of a tree, the orientation is irrelevant: different orientations give rise to semigroups that are all isomorphic to the same partially commutative monoid.
When the graph $G$ is fixed, we will denote the corresponding semigroup by $\mathcal{S}_{\mathcal{E}}$ omitting the index $G$. Moreover, in order to simplify the notation, we always think the edge set $E$ endowed with an orientation $\mathcal{E}$.       \\

Notice that the generating automaton contains the sink state $id$, which corresponds to the identity of the generated semigroup. For this reason, we also include $id$ in the generating set of $\mathcal{S}_{\mathcal{E}}$. We put $A=E\cup \{id\}$ in order to denote such a generating set. Remark that, in this way, the dual automaton $\partial \mathcal{A}_{G,\mathcal{E}}$ everywhere contains loops of the form $x\vlongfr{id}{id}x$.

Let us fix a tree $T=(V,E)$ and let $\mathcal{E}$ be an orientation of its edges. We have already remarked that two generators commute if and only if they correspond to two edges which are not incident in $T$. Actually, these are the only nontrivial relations in $\mathcal{S}_{\mathcal{E}}$. In fact, we will show (see Theorem \ref{theo: submonoid is a trace monoid}) that the semigroup $\mathcal{S}_{\mathcal{E}}$ is isomorphic to the partially commutative monoid with presentation
\[
\la A\mid id=\mathbf{1}, ab=ba\mbox{ if }\{a,b\}\mbox{ is an edge in }L(T)^c\ra,
\]
where $L(T)^c$ is the graph obtained by complementing the line graph of $T$, i.e., two vertices $a,b$ are connected in $L(T)^c$ if the corresponding edges $a,b\in E$ of $T$ are not incident, and in this case $a$ and $b$ commute in $\mathcal{S}_{\mathcal{E}}$.

\begin{example}\rm
In Fig. \ref{figlinegraph} an example of an oriented tree $T$, together with its line graph $L(T)$ and its complement $L(T)^c$, is represented.
\begin{figure}[h]
\begin{center}
\footnotesize
\psfrag{1}{$1$}\psfrag{2}{$2$}\psfrag{3}{$3$}\psfrag{4}{$4$}\psfrag{5}{$5$}\psfrag{6}{$6$}
\psfrag{e1}{$e_1$}\psfrag{e2}{$e_2$}\psfrag{e3}{$e_3$}\psfrag{e4}{$e_4$} \psfrag{e5}{$e_5$}
\includegraphics[width=0.80\textwidth]{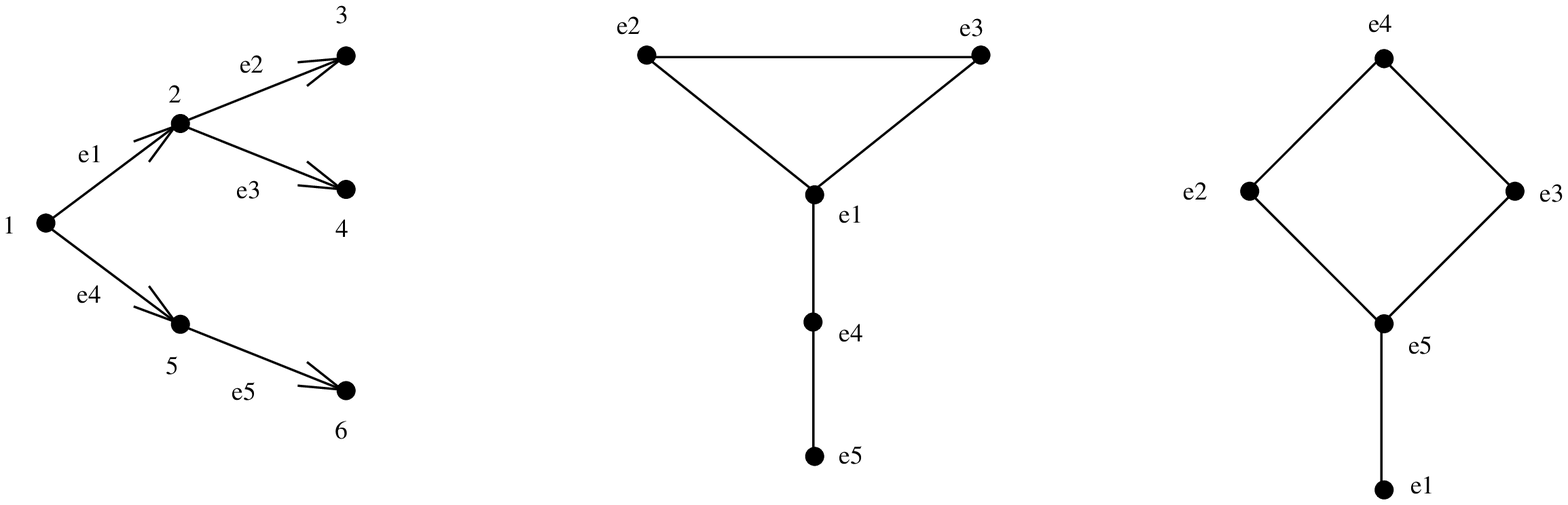}
\end{center}\caption{An oriented tree $T$ and the graphs $L(T)$ and $L(T)^c$.} \label{figlinegraph}
\end{figure}
\end{example}

For each word $u=a_1\ldots a_n\in A^*$ and vertex $x\in V$ we denote by $P(x,u)$ the path
$$
x\vvlongfr{a_1}{x\circ a_1}x_1\vvlongfr{a_2}{x_1\circ a_2}\ldots\ldots \vvlongfr{a_{n-1}}{x_{n-2}\circ a_{n-1}} x_{n-1}\vvlongfr{a_n}{x_{n-1}\circ a_{n}}x_{n}
$$
in $\partial \mathcal{T}$ starting at $x$ induced by the word $u$, while we denote by $p(x,u)$ the sequence of vertices of $P(x,u)$ without consecutive repetitions. For instance, consider the dual automaton associated with the oriented tree depicted in Fig. \ref{figlinegraph}. If we take the word $w=e_2e_1e_1e_4$, then:
$$
P(1,w)=1\vlongfr{e_2}{id}1\vlongfr{e_1}{e_1}2\vlongfr{e_1}{id}1\vlongfr{e_4}{e_4}5 \qquad \qquad p(1,w)=1215.
$$
We consider the following set
\[
A_x=\{a_i\in A:\mbox{ such that }x\vlongfr{a_i}{id} x\mbox{ are loops in }\partial  \mathcal{T}\}
\]
which contains $id$ together with the set of edges of $T$ that are not incident to $x$. We denote by $\varepsilon: A^*\to E^*$ the morphism that erases the identity state $id$.
The following lemma follows from the definition of $A_x$.

\begin{lemma}\label{lem: not incident}
Consider a word $u\in A^*$ such that $p(x,u)=xy\ldots$, with $x\neq y$. Then we may factorize $u=u'\alpha u''$, where $u'\in A_x^*$ and $\alpha\in E$ is the edge connecting $x$ and $y$.
\end{lemma}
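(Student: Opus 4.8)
The plan is to walk along the path $P(x,u)$ and spot the first arrow that actually moves away from the vertex $x$: that arrow will carry the edge $\alpha$, and everything before it will form the prefix $u'$. Concretely, write $u=a_1\cdots a_n$ and let $x=x_0,x_1,\dots,x_n$ be the vertices of $P(x,u)$, so that $x_i=x_{i-1}\cdot a_i$ in $\partial\mathcal T$. Since $p(x,u)=xy\cdots$ with $y\neq x$, the vertex $y$ occurs in $P(x,u)$, so there is a least index $j$ with $x_j\neq x$; by minimality $x_0=\dots=x_{j-1}=x$, and because $p(x,u)$ is $P(x,u)$ with consecutive repetitions deleted, its second entry is exactly $x_j$, i.e. $x_j=y$.

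First I would analyse the arrow $x=x_{j-1}\to x_j=y$. From the description of $\partial\mathcal T$ induced by Definition \ref{quartadose} (and recalled right after it), for a state $z\in V$ and a letter $a\in A$ one has $z\cdot a=z$ precisely when $a=id$ or $a$ is an edge of $T$ not incident to $z$ — these are exactly the loops at $z$, i.e. the condition $a\in A_z$ — while $z\cdot a\neq z$ forces $a$ to be an edge incident to $z$, and then $z\cdot a$ is the other endpoint of that edge. Applying this with $z=x$ and $a=a_j$: since $x\cdot a_j=x_j=y\neq x$, the letter $a_j$ is an edge of $T$ incident to $x$ with other endpoint $y$; as $T$ is a tree (a simple graph would already suffice), this edge is unique, so $a_j=\alpha$ is the edge connecting $x$ and $y$. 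Then I would handle the prefix: for every $i<j$ we have $x\cdot a_i=x_i=x$, so by the same dichotomy $a_i\in A_x$; hence $u'=a_1\cdots a_{j-1}\in A_x^{\ast}$ (allowing the empty word when $j=1$), and putting $u''=a_{j+1}\cdots a_n$ yields $u=u'\alpha u''$ as required.

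There is no genuine difficulty in this argument; the only things to keep straight are the bookkeeping between $P(x,u)$ and its repetition-free contraction $p(x,u)$ — making sure that the second vertex of $p(x,u)$ corresponds to the first index $j$ where $P(x,u)$ actually changes vertex — and the conventions for the dual automaton, so that the loop/non-loop dichotomy at a state $z$ is correctly phrased as membership in $A_z$. Both are immediate from the definitions recalled just above, and in fact the argument yields slightly more than the statement, namely that $y$ is necessarily a neighbour of $x$ in $T$.
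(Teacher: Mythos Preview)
Your argument is correct and is exactly the unpacking of what the paper means when it says the lemma ``follows from the definition of $A_x$'': you locate the first step of $P(x,u)$ that leaves $x$, use the loop/non-loop dichotomy at $x$ in $\partial\mathcal{T}$ to identify that letter as the edge $\alpha$ joining $x$ to $y$, and conclude that the preceding letters lie in $A_x$. There is no difference in approach, only in the level of detail.
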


We say that a word $v\in A^*$ has a \emph{noose} at $x$ if $v$ contains a nontrivial factor $u = a_1 \ldots a_k$ such that the path $P(x,v)$ in $\partial \mathcal{T}$ contains a subpath
\[
x\vlongfr{u}{u'}x \quad = \quad x_1\vlongfr{a_1}{b_1}x_2\vlongfr{a_2}{b_2}\ldots \vlongfr{a_k}{b_k}x_{k+1}
\]
where the only vertices equal to $x$ are the first one $x_1$ and the last one $x_{k+1}$ (see Fig. \ref{fignoose}). Note that by the tree-like structure of $\partial \mathcal{T}$ it must be $a_1=a_k$.
\begin{figure}[h]
\begin{center}
\footnotesize
\psfrag{x}{$x$}\psfrag{x2}{$x_2$}\psfrag{a1}{$a_1|b_1$}\psfrag{a2}{$a_2|b_2$}\psfrag{ak-1}{$a_{k-1}|b_{k-1}$}\psfrag{ak}{$a_k|b_k$}
\psfrag{e1}{$e_1$}\psfrag{e2}{$e_2$}\psfrag{e3}{$e_3$}\psfrag{e4}{$e_4$} \psfrag{e5}{$e_5$}
\includegraphics[width=0.70\textwidth]{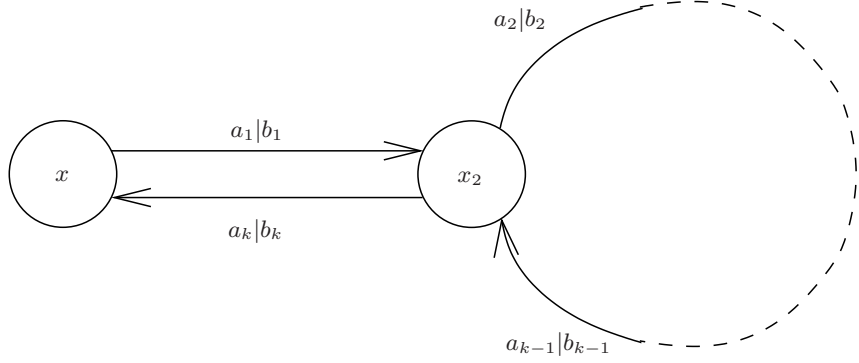}
\end{center}\caption{A noose in $\partial \mathcal{T}$.} \label{fignoose}
\end{figure}

\begin{lemma}\label{lem: noose}
Let $T=(V,E)$ be a tree and let $\mathcal{E}$ be an orientation of $E$. Let $\alpha=(x,y)$ be an oriented edge from $x$ to $y$. Suppose that $u=\alpha w\alpha$ is a word in $A^\ast$ such that
\[
x\vlongfr{u}{u'}x \quad = \quad x\vlongfr{\alpha}{\alpha}y\vlongfr{w}{w'}y\vlongfr{\alpha}{id}x
\]
is a path in $\partial \mathcal{T}$ and $w$ does not contain any occurrence of $\alpha$. Then $\varepsilon(u')=\varepsilon(\alpha w'id)=\alpha\varepsilon(w')$ is such that $\varepsilon(w')$ is a word formed by edges that are not incident to $x$.\\
\indent Analogously, if $u=\alpha w\alpha$, with $\alpha=(y,x)$ is a word in $A^\ast$ such that
\[
x\vlongfr{u}{u''}x\quad = \quad x\vlongfr{\alpha}{id}y\vlongfr{w}{w'}y\vlongfr{\alpha}{\alpha}x
\]
is a path in $\partial\mathcal{T}$ and $w$ does not contain any occurrence of $\alpha$, then $\varepsilon(u'')=\varepsilon(idw'\alpha ) = \varepsilon(w')\alpha$ is such that $\varepsilon(w')$ is a word formed by edges that are not incident to $x$.
\end{lemma}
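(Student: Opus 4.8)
The plan is to read $\varepsilon(w')$ off the geometry of the path $P(y,w)$ inside the tree $T$, and to use the hypothesis that $w$ contains no occurrence of $\alpha$ to trap that path in the branch of $T$ hanging off $y$ on the far side of $x$. First I would record the elementary transition rule of $\partial\mathcal{T}$: for an oriented edge $b=(p,q)\in E$ and a state $z\in V$, reading $b$ from $z$ outputs $b$ and moves to $q$ when $z=p$, outputs $id$ and moves to $p$ when $z=q$, and outputs $id$ and fixes $z$ when $z\notin\{p,q\}$ (an $id$-labelled loop), while reading $id$ is always an $id$-loop. This has two consequences that I would isolate as the backbone of the argument: the vertex sequence $p(y,w)$ is a walk in $T$ starting at $y$ whose individual steps correspond exactly to those positions of $w$ at which an edge incident to the current vertex is read; and the letters of $\varepsilon(w')$ are precisely the edges of $T$ that this walk traverses \emph{from source to target}, listed in order.

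Next I would confine the walk. In the tree $T$ the unique edge joining $x$ and $y$ is $\alpha$, and deleting $\alpha$ splits $T$ into two subtrees; let $T_y$ denote the one containing $y$, so that $x\notin T_y$ and, consequently, no edge incident to $x$ is contained in $T_y$ (any such edge has $x$ as an endpoint). A step of the walk $p(y,w)$ leaving $T_y$ would have to traverse the cut edge $\alpha$, which by the transition rule above forces the letter $\alpha$ to occur in $w$; since by hypothesis $w$ contains no occurrence of $\alpha$, the walk $p(y,w)$ never leaves $T_y$. In particular it never visits $x$, and therefore never traverses an edge incident to $x$, since both endpoints of such an edge would have to appear consecutively along the walk.

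Combining the two parts: every letter of $\varepsilon(w')$ is, by the first step, an edge traversed by the walk $p(y,w)$, and by the second step none of these edges is incident to $x$ — equivalently $\varepsilon(w')\in(A_x\cap E)^{\ast}$. Reading the outputs along the displayed path then yields $\varepsilon(u')=\varepsilon(\alpha\,w'\,id)=\alpha\,\varepsilon(w')$ in the first case and $\varepsilon(u'')=\varepsilon(id\,w'\,\alpha)=\varepsilon(w')\,\alpha$ in the second, which is exactly the assertion; the second case requires no separate treatment, since reversing the orientation of $\alpha$ only swaps whether the first (resp. last) letter of $u$ contributes $\alpha$ or $id$ to the output, leaving the confinement argument for $P(y,w)$ untouched. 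I do not expect a genuine obstacle here: the only points that need a little care are making precise which positions of $w$ produce a non-$id$ output letter, and the (immediate) observation that a walk in a tree cannot pass from one side of an edge to the other without using that edge.
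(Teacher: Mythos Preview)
Your argument is correct and follows the same idea as the paper's proof: the edges appearing in $\varepsilon(w')$ all lie in the connected component of $T\setminus\{\alpha\}$ containing $y$, hence cannot be incident to $x$. The paper states this in a single sentence, while you spell out the transition rule of $\partial\mathcal{T}$ and the confinement of the walk $p(y,w)$ explicitly, but the substance is identical.
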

\begin{proof}
The statement follows by observing that the edges forming the word $\varepsilon(w')$ are edges that belong to the connected component containing the vertex $y$ and obtained from $T$ by cutting the edge $\alpha$. Since $T$ is a tree, these edges are clearly not incident to $x$.
\end{proof}

\begin{lemma}\label{lem: forbitten edges}
Let $\alpha=(x,y)$ be an oriented edge in $E$, and let $u\in A^*$ be a word such that $p(x,x\circ u)=xy\ldots$. Then we may factorize
\begin{equation}\label{factor}
u=w\alpha w', \qquad \mbox{with } w\in A_x^*, \,w'\in A^*.
\end{equation}
\end{lemma}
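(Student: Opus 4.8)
The plan is to read the factorization directly off the path $P(x,u)$ traced in the dual automaton $\partial\mathcal{T}$, using the explicit transition rules of $\mathcal{T}$ recorded in Definition~\ref{quartadose}. Write $u=a_1\cdots a_n$ with $a_i\in A$, and consider the induced path $x=x_0\vlongfr{a_1}{b_1}x_1\vlongfr{a_2}{b_2}\cdots\vlongfr{a_n}{b_n}x_n$ in $\partial\mathcal{T}$. The hypothesis $p(x,x\circ u)=xy\ldots$ says precisely that, walking along this path, the first vertex different from $x$ is $y$; so let $j$ be the least index with $x_j\neq x$, whence $x_0=\cdots=x_{j-1}=x$ and $x_j=y$.

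First I would analyse the prefix $a_1\cdots a_{j-1}$: for each $i<j$ the transition $x\vlongfr{a_i}{b_i}x$ is a loop at $x$ in $\partial\mathcal{T}$. By the formulas for $\lambda$ and $\mu$ in Definition~\ref{quartadose}, a letter $a\in A$ produces a loop at $x$ exactly when $a=id$ or $a$ is an edge of $T$ not incident to $x$; that is, exactly when $a\in A_x$. Hence $w:=a_1\cdots a_{j-1}\in A_x^*$, which is the empty word in case $j=1$.

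Next I would pin down the letter $a_j$ that leaves $x$: the transition $x=x_{j-1}\vlongfr{a_j}{b_j}x_j=y$ forces $x\cdot a_j=y$ with $y\neq x$. From Definition~\ref{quartadose} again, the letter $id$ loops at $x$, so $a_j\neq id$; and for an edge $e=(p,q)$ the equality $x\cdot e=y\neq x$ forces $e$ to be incident to $x$ with other endpoint $y$, that is $e=(x,y)$ or $e=(y,x)$. Since only one of these two orientations lies in $\mathcal{E}(E)$, namely $\alpha=(x,y)$ by hypothesis, we conclude $a_j=\alpha$. Putting $w':=a_{j+1}\cdots a_n\in A^*$ then gives the desired factorization $u=w\alpha w'$; note in particular that $\alpha$ must indeed occur in $u$.

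I do not anticipate a genuine obstacle here: this is essentially Lemma~\ref{lem: not incident}, now with the relevant edge named together with its orientation $\alpha=(x,y)$, and everything is immediate from the Moore diagram of $\partial\mathcal{T}$. The one point deserving a little care is the uniqueness of the letter effecting the step $x\to y$, settled above by observing that $\{x,y\}$ is a single edge of the tree $T$, only one of whose two orientations sits in $\mathcal{E}(E)$.
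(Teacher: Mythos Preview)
Your argument has a genuine gap: you have misread the hypothesis. The assumption is $p(x,\,x\circ u)=xy\ldots$, which concerns the path $P(x,\,x\circ u)$ traced in $\partial\mathcal{T}$ by the \emph{output word} $x\circ u=b_1\cdots b_n$, not the path $P(x,u)$ traced by $u$ itself. In your notation these are two different walks: $P(x,u)$ visits $x_0,x_1,\ldots,x_n$, whereas $P(x,\,x\circ u)$ starts again at $x$ and reads the letters $b_1,\ldots,b_n$. Your sentence ``The hypothesis $p(x,x\circ u)=xy\ldots$ says precisely that, walking along this path, the first vertex different from $x$ is $y$'' therefore asserts exactly what must be proved, not what is given. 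In effect you have re-proved Lemma~\ref{lem: not incident} (which is about $p(x,u)$) and declared the present lemma to be the same statement; it is not.

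The real content of the lemma is the passage from information about $x\circ u$ back to a factorization of $u$. The paper first applies Lemma~\ref{lem: not incident} to the output word to write $x\circ u=v\alpha v'$ with $v\in A_x^*$, then lets $w$ be the prefix of $u$ with $x\circ w=v$. The delicate point---completely absent from your attempt---is to exclude the possibility that this prefix $w$ already contains an edge $\gamma$ incident to $x$ leading to some $z\neq y$. If that happened, $P(x,u)$ would first leave $x$ towards $z$; since $x\cdot w=x$, the word $w$ would then contain a noose $\gamma t\gamma$ at $x$, and Lemma~\ref{lem: noose} forces $\varepsilon(x\circ \gamma t\gamma)$ to begin or end with $\gamma$, whence $p(x,x\circ u)=xz\ldots$, contradicting the hypothesis. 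Your proposal skips this noose argument entirely, so as written it does not establish the lemma.
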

\begin{proof}
By Lemma~\ref{lem: not incident} we have that  $x\circ u=v\alpha v'$, with $v\in A_x^*$. Let $w$ be the prefix of $u$ such that $x\circ w= v$. If $w\in A_x^*$, then the factorization in Eq. \eqref{factor} is obtained. Suppose now that $w\notin A_x^*$, so that $w$ contains a letter that is an edge incident to $x$. Let $\gamma$ be the first letter occurring in $w$ having this property.
We want to prove that it must be $\gamma = \alpha$, and then the factorization in Eq. \eqref{factor} still holds.\\
By contradiction, assume that either $\gamma=(x,z)$ or $\gamma=(z,x)$, with $z\neq y$, depending on the orientation. Let $u=s\gamma s'\alpha w'$ be a word, with $s\in A_x^*$, $s\gamma s'=w$, and $w'$ is such that $v' = y \circ w'$. Since $v\alpha=x\circ w\alpha$, then it must be $x\cdot w=x$. By the tree-like structure of $\partial \mathcal{T}$ it happens that $w$  must have a (first) noose $\gamma t \gamma$, where $t$ is a cyclic path on $z$. It follows from Lemma \ref{lem: noose} that, depending on the orientation of $\gamma$, one has either $\varepsilon(x\circ \gamma t \gamma)=t'\gamma$ or $\varepsilon(x\circ \gamma t \gamma)=\gamma t'$. In both cases $p(x,x\circ u) = xz\ldots \neq xy\ldots$, a contradiction.
\end{proof}

\begin{prop}\label{lem: initial path}
Let $u\equiv v$ be a relation of $\mathcal{S}_{\mathcal{E}}$ and let $x\in V$. Then:
\begin{itemize}
\item either $p(x,u)=p(x,v)=x$;
\item or $p(x,u)$ and $p(x,v)$ both have length at least $2$ and start with the same two vertices, i.e., $p(x,u)=xy\ldots$ if and only if $p(x,v)=xy\ldots$.
\end{itemize}
\end{prop}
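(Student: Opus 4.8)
The plan is to extract two complementary pieces of information from the hypothesis $u\equiv v$. First, $\mathcal{S}_{\mathcal{E}}$ is a subsemigroup of the group $\mathcal{G}_T$, so $uv^{-1}$ is a relation of $\mathcal{G}_T$; the generators occurring in it form a subset of $E$ whose induced subgraph is cycle-free, since $T$ is a tree, so Theorem~\ref{relazioni} applies and gives that each edge has exponent sum zero in $uv^{-1}$ --- equivalently, $\varepsilon(u)$ and $\varepsilon(v)$ have the same Parikh vector over $E$. Second, by self-similarity of $\mathcal{S}_{\mathcal{E}}$ we have $u\circ z=v\circ z$ and $u\cdot z\equiv v\cdot z$ for every $z\in V^{\ast}$.

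First I would settle the dichotomy. By the Parikh identity, $\varepsilon(u)$ contains an edge incident to $x$ if and only if $\varepsilon(v)$ does. If it does not, then $u,v\in A_x^{\ast}$, so $P(x,u)$ and $P(x,v)$ are concatenations of loops at $x$ in $\partial\mathcal{T}$ and hence $p(x,u)=p(x,v)=x$, the first alternative. If it does, then $P(x,u)$ leaves $x$, say first reaching $y_u$, and Lemma~\ref{lem: not incident} gives $u=w_u\,\alpha_u\,u_1$ with $w_u\in A_x^{\ast}$ and $\alpha_u$ the edge $\{x,y_u\}$; likewise $v=w_v\,\alpha_v\,v_1$ with $\alpha_v=\{x,y_v\}$. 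Then $p(x,u)=x\,y_u\ldots$ and $p(x,v)=x\,y_v\ldots$, both of length at least $2$, and the whole proposition reduces to the claim that $\alpha_u=\alpha_v$.

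To prove $\alpha_u=\alpha_v$ I would argue by induction on $|u|+|v|$. Since every generator acts on $V^{\ast}$ as a bijection, if $u$ and $v$ begin with the same letter it can be cancelled on the left and the inductive hypothesis applied to the resulting shorter relation at the vertex reached by that letter; likewise a common last letter can be cancelled on the right, keeping the vertex $x$. In both cases a direct check shows that $p(x,u)$ and $p(x,v)$ inherit the conclusion from the shortened pair. This leaves the case where $u$ and $v$ have distinct first letters and distinct last letters; here I would assume $\alpha_u\neq\alpha_v$ and look for a contradiction. Then $\alpha_u,\alpha_v$ are distinct edges at $x$, hence incident, and in $T$ the edge $\alpha_u$ separates $y_u$ from $x$ and from $y_v$, symmetrically for $\alpha_v$. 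Passing to the restriction at $x$, one computes $u\cdot x\equiv (u_1\cdot y_u)(\alpha_u\cdot x)$ and $v\cdot x\equiv (v_1\cdot y_v)(\alpha_v\cdot x)$, with $\alpha_u\cdot x,\alpha_v\cdot x\in\{id\}\cup E$; using Lemmas~\ref{lem: noose} and~\ref{lem: forbitten edges} to control along which portions of the induced paths $\alpha_u$ and $\alpha_v$ are output, one aims to show that in these restrictions $\alpha_u$ still occurs before $\alpha_v$ from the $u$-side and after it from the $v$-side, and then --- comparing further restrictions $u\cdot(x^{k})\equiv v\cdot(x^{k})$, which along the fixed letter $x$ range over a finite set and are therefore eventually periodic --- to contradict $u\cdot x\equiv v\cdot x$.

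The main obstacle is exactly making this last step go through. Because $|u\cdot x|=|u|$, the induction cannot run on plain word length: one must instead induct on a quantity that provably strictly decreases under the relevant restriction, which forces a careful account of the identity states that Lemmas~\ref{lem: forbitten edges} and~\ref{lem: noose} produce. In addition, when $\alpha_u$ is oriented towards $x$ the letter $\alpha_u$ is \emph{not} output at the junction of the factorization $u=w_u\alpha_u u_1$, but only when the path induced by $u_1$ first returns to $x$; locating that return and describing what is output before it is precisely what the noose analysis of $\partial\mathcal{T}$ is for, and both orientations of $\alpha_u$ and of $\alpha_v$ have to be treated at once. Once this is in place, the remainder is Parikh-vector bookkeeping together with the two cancellation reductions above.
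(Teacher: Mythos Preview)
Your dichotomy step is fine and essentially matches the paper: the Parikh identity coming from Theorem~\ref{relazioni} (equivalently Corollary~\ref{cor: zero sum}) is exactly how the paper rules out the mixed case $p(x,u)=x$, $p(x,v)=xy\ldots$.

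The inductive part, however, has a genuine gap, and you have put your finger on it yourself. The left/right cancellation reductions and the ``eventually periodic along $x^{k}$'' idea are detours that do not close the argument; what is missing is the single observation that makes the induction run cleanly. Here it is. Under the contradiction hypothesis $p(x,u)=xy\ldots$ and $p(x,v)=xz\ldots$ with $y\neq z$, both paths in $\partial\mathcal{T}$ end at the same vertex (since $u\equiv v$ forces $x\cdot u=x\cdot v$). Because the underlying graph of $\partial\mathcal{T}$ is a tree, two paths from $x$ that leave towards different neighbours but end at the same vertex cannot both be noose-free: at least one, say $u$, must contain a noose $\alpha w\alpha$ at $x$. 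By Lemma~\ref{lem: noose} one of the two $\alpha$'s outputs $id$, so $|\varepsilon(x\circ u)|<|\varepsilon(u)|$. This is the decreasing quantity you were looking for: take the induction parameter to be $L(u,v)=|\varepsilon(u)|+|\varepsilon(v)|$, not $|u|+|v|$.

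With that in hand the paper finishes in two lines: Lemma~\ref{lem: noose} also gives $p(x,x\circ u)=xy\ldots$; since $x\circ u\equiv x\circ v$ and $L(x\circ u,x\circ v)<L(u,v)$, induction yields $p(x,x\circ v)=xy\ldots$; then Lemma~\ref{lem: forbitten edges} lifts this back to $p(x,v)=xy\ldots$, contradicting $p(x,v)=xz\ldots$. No cancellation tricks, no iterated restrictions along $x^{k}$, and no orientation case-split beyond what is already packaged in Lemma~\ref{lem: noose}.
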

\begin{proof}
We first show that, given $x\in V$, it is impossible that $p(x,u)=x$ and $p(x,v)=xy\ldots$, for some $y\in V$. In fact, in this case one has $u\in A_x^*$ and, on the other hand, $v$ must contain an occurence $\alpha$, where $\alpha$ is the edge joining $x$ and $y$. Since $u\equiv v$ is a semigroup relation, then $w:=vu^{-1}$ is a group relation and, by Corollary \ref{cor: zero sum}, the exponent sum of the generator $\alpha$ in $w$ must be zero. This is in contradiction with the fact that $u$ does not contain $\alpha$ and $v$ only contains positive powers of $\alpha$. \\
It follows from the previous argument that we may have either $p(x,u)=p(x,v)=x$ and there is nothing to prove, or both $p(x,u)$ and $p(x,v)$ have length greater than $1$. Hence we have to show that $p(x,u)=xy\ldots$ if and only if $p(x,v)=xy\ldots$, for some $y\in V$.\\
Let us prove the last claim by induction on $L(u,v)=|\varepsilon(u)|+|\varepsilon(v)|$. One may check by a direct computation that the minimal relation $u=v$ verifies $L(u,v)=4$, with $u=ab$ and $v=ba$, where $a,b$ correspond to nonadjacent edges, and only one between $a$ and $b$ is incident to $x$. In this case, the claim follows.\\
\indent Suppose now by contradiction that there is a vertex $x\in V$ such that $p(x,u)=xy\ldots$ and $p(x,v)=xz\ldots$ with $y\neq z$. Since $u\equiv v$ is a relation, it must be $x\cdot u=x\cdot v$. Hence by the tree-like structure of $\partial \mathcal{T}$ we have that $u$ or $v$ has a noose at $x$, let us suppose that $u$ has such a noose. Let $s=\alpha w\alpha$ be the factor of $u$ that is a noose and let us suppose that it is the first, in the sense that if $u=u'su''$, then no factor of $u's$ is a noose at $x$. Since $p(x,u)=xy\ldots$, we have that $u'$ is a word on $A_x^\ast$, so that $\varepsilon(x\circ u')$ is the empty word, and $\alpha$ is either equal to $(x,y)$ or to $(y,x)$. Now, by Lemma~\ref{lem: noose}, $\varepsilon(x\circ u)= \varepsilon(x\circ s)$ is either equal to $\varepsilon(\alpha w'u'')$ or to $\varepsilon(w'\alpha u'')$ (depending on the orientation of $\alpha$) where $\varepsilon(w')$ is a word formed by edges that are not incident to $x$. Hence, in both cases we have $p(x,x\circ u)=xy\ldots $. Since $x\circ u\equiv x\circ v$ is also a relation and $L(x\circ u, x\circ v)<L(u,v)$ because $u$ contains a noose, we can apply induction to deduce that $p(x,x\circ v)=p(x,x\circ u)=xy\ldots $. Therefore, by Lemma~\ref{lem: forbitten edges} we may factorize $v$ as $v'\alpha v''$, with $v'\in A_x^*$ and $v''\in A^*$. Hence, $p(x,v)=xy\ldots $, a contradiction.
\end{proof}
Let $u,v \in A^\ast$. We write $u\rightarrow_c v$ if:
\begin{itemize}
\item either $\varepsilon(u) = \varepsilon(v)$;
\item or there exist $a,b \in E$ such that $u=u'abu''$ and $v=u'bau''$, where $a,b$ are non incident edges of $E$.
\end{itemize}
We write $u\rightarrow^*_c v$ if there is a sequence $u=v_1,\ldots, v_n=v$ of words such that
\[v_1\rightarrow_c v_2\rightarrow_c \ldots  \rightarrow_c v_{n-1}\rightarrow_c v_n.\]
Observe that the relation $\rightarrow^*_c$ is an equivalence relation.

\begin{prop}\label{lemmaunico}
Let $u,v\in A^*$. Then $u\equiv v$ is a relation in $\mathcal{S}_{\mathcal{E}}$, with respect to the generating set $A$, if and only if $u\rightarrow^*_c v$.
\end{prop}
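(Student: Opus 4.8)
The inclusion ${\rightarrow^*_c}\subseteq{\equiv}$ is the easy half, which I would settle first. An elementary step $u\rightarrow_c v$ of the first type leaves the induced transformation of $V^\ast$ unchanged, because the sink $id$ acts trivially; and a step of the second type replaces a factor $ab$ by $ba$ with $a,b$ non-incident edges, which act nontrivially on disjoint subsets of $V$ and so commute as transformations of $V^\ast$. Transitivity of $\equiv$ then gives $u\rightarrow^*_c v\Longrightarrow u\equiv v$.

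For the converse I would first reduce to $u,v\in E^\ast$ by replacing $u,v$ with $\varepsilon(u),\varepsilon(v)$ — legitimate since $u\rightarrow_c\varepsilon(u)$, $v\rightarrow_c\varepsilon(v)$ and $\varepsilon(u)\equiv u\equiv v\equiv\varepsilon(v)$ — and then induct on $L(u,v)=|\varepsilon(u)|+|\varepsilon(v)|$. If $L(u,v)=0$ both words are empty and there is nothing to prove. If $L(u,v)>0$, then Proposition~\ref{lem: initial path}, applied to a vertex moved by one of the two words, excludes the possibility that one of $u,v$ is empty while the other is not; hence both are nonempty. Write $u=au'$ with $a\in E$, and let $x_0,y_0$ be the endpoints of $a$, say with $a$ oriented as $(x_0,y_0)$.

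Reading $a$ from $x_0$ in $\partial\mathcal T$ leads to $y_0$, and from $y_0$ it leads to $x_0$, so $p(x_0,u)=x_0y_0\dots$ and $p(y_0,u)=y_0x_0\dots$; by Proposition~\ref{lem: initial path} the same initial segments occur for $v$. Lemma~\ref{lem: not incident} used at $x_0$ then shows that the letters of $v$ preceding its first letter incident to $x_0$ are edges non-incident to $x_0$ and that this first incident letter is an occurrence of $a$; used at $y_0$ it gives the analogous statement for $y_0$. Consequently the first letter $\beta$ of $v$ incident to $a$ (equivalently, to $x_0$ or $y_0$) is the earlier of these two occurrences of $a$, so $\beta$ is an occurrence of $a$ and no letter of $v$ before $\beta$ is incident to $a$. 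Writing $v=v_0\,a\,v_3$ accordingly, every edge of $v_0$ is non-incident to $a$ and hence commutes with $a$, so sliding this $a$ leftward past $v_0$ one letter at a time yields $v\rightarrow^*_c a\,v''$ with $v'':=v_0v_3$.

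Finally $a\,v''\equiv v\equiv u=a\,u'$; since the automaton $\mathcal T$ is invertible, $z\mapsto a\circ z$ is a bijection of $V^\ast$, and combined with the identity $(aw)\circ z=w\circ(a\circ z)$ from \eqref{papaakiev} this forces $u'\equiv v''$. As $|\varepsilon(u')|=|\varepsilon(u)|-1$ and $|\varepsilon(v'')|=|\varepsilon(v)|-1$, we have $L(u',v'')=L(u,v)-2$, so the inductive hypothesis gives $u'\rightarrow^*_c v''$, whence $u=au'\rightarrow^*_c a\,v''$; together with $v\rightarrow^*_c a\,v''$ and the fact that $\rightarrow^*_c$ is an equivalence relation, this yields $u\rightarrow^*_c v$. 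The delicate point, and the only place where the tree structure of $\partial\mathcal T$ is genuinely needed, is the identification $\beta=a$: it is there that Lemma~\ref{lem: not incident} and Proposition~\ref{lem: initial path} (themselves resting on the noose analysis of Lemmas~\ref{lem: noose} and~\ref{lem: forbitten edges}) are essential; the reduction to $E^\ast$, the cancellation of the leading letter, and the length bookkeeping are routine.
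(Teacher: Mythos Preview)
Your proof is correct and follows essentially the same strategy as the paper: reduce to $E^\ast$, induct on total length, locate an occurrence of the leading letter $a$ of $u$ inside $v$ preceded only by edges commuting with $a$, slide it to the front, cancel, and recurse. Your organization is in fact slightly cleaner than the paper's: rather than splitting into the cases ``first letters equal'' versus ``first letters distinct'' and then, in the second case, arguing by contradiction that the prefix of $v$ before the first $a$ contains no edge incident to $y_0$, you apply Proposition~\ref{lem: initial path} and Lemma~\ref{lem: not incident} symmetrically at both endpoints $x_0,y_0$ of $a$ from the outset, which immediately pins down the first letter of $v$ incident to $a$ as an occurrence of $a$ itself (the two positions you obtain necessarily coincide, since $a$ is incident to both vertices). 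The paper instead uses cancellativity via the embedding into the automaton group; your explicit use of invertibility together with Eq.~\eqref{papaakiev} accomplishes the same cancellation.
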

\begin{proof}
If $u\rightarrow^*_c v$, then it is straightforward to verify that $u\equiv v$ is a relation in $\mathcal{S}_{\mathcal{E}}$.

Let us prove the converse implication. Since, by definition, we have $u \rightarrow^*_c \varepsilon(u)$ and  $v \rightarrow^*_c \varepsilon(v)$, and $\varepsilon(u)\equiv \varepsilon(v)$ by hypothesis, we may assume $u,v\in E^\ast$. By Theorem \ref{relazioni} $u \equiv id$ is a relation if and only if $u=id$. Thus we can suppose, without loss of generality, that $|u|\geq 1$ and $|v|\geq 1$.\\
\indent Let us proceed by induction on $|u|+|v|$. If $|u|+|v|=2$, then it must be $u,v\in E$, and so $u\equiv v$ if and only if $u=v$ by definition of graph automaton semigroup. Let  $u=a\overline{u}$, $v=b\overline{v}$ with $a,b\in E$. We may assume that $a\neq b$ since, if $u,v$ start with the same letter $a$, then by the cancellativity of $\mathcal{S}_{\mathcal{E}}$ (it embeds into the corresponding automaton group) we may apply induction on the shorter relation $\overline{u}\equiv \overline{v}$.\\
\indent We have two cases to consider: either $a,b$ are edges in $E$ sharing a vertex, or they commute. By Proposition~\ref{lem: initial path} we may exclude the first case. Indeed, if $a,b$ share a common vertex $x$, then if $y$ is the other vertex of the oriented edge $a$, and $z\neq y$ the other vertex of the oriented edge $b$, then we would have $p(x,u)=xy\ldots $ and $p(x,v)=xz\ldots$, which is in contradiction with Proposition~\ref{lem: initial path}.\\
\indent Thus, we may suppose that $a,b$ commute and put $a=(x,y)$, $b=(x',y')$. By Proposition~\ref{lem: initial path} we have that $p(x,u)=xy\ldots$ and $p(x,v)=xy\ldots$, hence by Lemma~\ref{lem: not incident} we deduce that $v=v'av''$ with $v'\in A_x^\ast$. Even more, we claim that $v'$ contains only elements that commute with $a$. Indeed, suppose by contradiction that $v'$ contains a letter $c$ that is an edge incident to $a$, we also assume that no letter in $v'$ preceding $c$ is incident to $a$, i.e., $v'=wcw'$ where $w$ does not contain any generator incident to $a$. Let $c=(y,z)$ or $c=(z,y)$, with $z\neq x$. Since $w$ does not contain any generator that is incident to $a$, and thus to $y$, we conclude that $p(y, v)=yz\ldots$. On the other hand, since $u$ starts with the letter $a$, we have $p(y,u)=yx\ldots$, in contradiction with Proposition~\ref{lem: initial path}. Thus, $v=v'av''$ where $v'$ is a word containing only elements that commute with $a$. Therefore $v\rightarrow^*_c av'v''$ holds and so $v\equiv av'v''$ is a relation by the first part of the proof. Therefore, since $u\equiv v\equiv av'v''$, we conclude  that $ u\equiv av'v''$ is a relation. Now, since both $u=a\overline{u}$ and  $av'v''$ start with the letter $a$, we may cancel out this letter to obtain a shorter relation $\overline{u}\equiv v'v''$. By the induction hypothesis we get that $\overline{u}\rightarrow_c^* v'v''$, from which we conclude that $u=a\overline{u}\rightarrow_c^* av'v''\rightarrow_c^*v'av''=v$ and so $u\rightarrow_c^* v$.
\end{proof}

From the previous proposition it follows the main result of this section.

\begin{theorem}\label{theo: submonoid is a trace monoid}
Let $T=(V,E)$ be a tree and let $\mathcal{E}$ be an orientation of $E$. Then, the automaton semigroup $\mathcal{S}_{\mathcal{E}}$ is isomorphic to the partially commutative monoid with presentation
\[
\la E \cup \{id\} \mid id=\mathbf{1}, ab=ba\mbox{ if }\{a,b\}\mbox{ is an edge in }L(T)^c\ra.
\]
\end{theorem}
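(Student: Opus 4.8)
The plan is to exhibit an explicit isomorphism between the partially commutative monoid $M$ presented by $\la A\mid id=\mathbf{1},\ ab=ba\text{ if }\{a,b\}\text{ is an edge in }L(T)^c\ra$, where $A=E\cup\{id\}$, and $\mathcal{S}_{\mathcal{E}}$, letting Proposition~\ref{lemmaunico} do all the heavy lifting. First I would define a monoid homomorphism $\phi\colon M\to\mathcal{S}_{\mathcal{E}}$ by sending each generator in $A$ to the corresponding state transformation of $\mathcal{T}$, with $id$ mapped to the identity. This is well defined on $M$: the relator $id=\mathbf{1}$ holds because $id$ is the sink state, which acts as the identity on $V^\ast$; and each relator $ab=ba$ with $a,b$ non-incident holds in $\mathcal{S}_{\mathcal{E}}$ because, as recalled right after Definition~\ref{quartadose}, generators associated with non-incident edges act nontrivially only on disjoint subsets of $V$ and hence commute. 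Surjectivity of $\phi$ is immediate, since its image contains the whole generating set $A$ of $\mathcal{S}_{\mathcal{E}}$.

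Next I would identify the congruence on $A^\ast$ defined by the presentation of $M$ with the relation $\rightarrow_c^\ast$ introduced just before Proposition~\ref{lemmaunico}. On the one hand, every elementary step $u\rightarrow_c v$ is either an equality $\varepsilon(u)=\varepsilon(v)$ (derivable from $id=\mathbf{1}$ by inserting or deleting occurrences of $id$) or a swap of a non-incident pair $ab\mapsto ba$ inside a word, so $u\rightarrow_c^\ast v$ implies $[u]=[v]$ in $M$. On the other hand, one-step rewriting by the relators $id=\mathbf{1}$ and $ab=ba$ applied in an arbitrary context is exactly a composition of such elementary steps, so $[u]=[v]$ in $M$ implies $u\rightarrow_c^\ast v$. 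Since $\rightarrow_c$ is stable under left and right multiplication by a fixed word and $\rightarrow_c^\ast$ is an equivalence relation, $\rightarrow_c^\ast$ is in fact a congruence on $A^\ast$, and by the above it is the smallest congruence containing the defining relators of $M$; hence two words represent the same element of $M$ if and only if they are related by $\rightarrow_c^\ast$.

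Finally I would deduce injectivity of $\phi$. Let $u,v\in A^\ast$ with $\phi([u])=\phi([v])$, where $[w]$ denotes the class of $w$ in $M$; this means precisely that $u\equiv v$ is a relation in $\mathcal{S}_{\mathcal{E}}$ with respect to the generating set $A$. By Proposition~\ref{lemmaunico} we get $u\rightarrow_c^\ast v$, and therefore $[u]=[v]$ in $M$ by the identification established in the previous paragraph. Thus $\phi$ is a bijective monoid homomorphism, hence an isomorphism, and $\mathcal{S}_{\mathcal{E}}\cong M$, which is the claimed presentation.

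There is essentially no remaining obstacle: the combinatorial core — that any semigroup relation can be reached by swapping non-incident letters and inserting or deleting occurrences of $id$ — is already contained in Proposition~\ref{lemmaunico}, which rests on the path-and-noose analysis in $\partial\mathcal{T}$ carried out in Lemmas~\ref{lem: not incident}--\ref{lem: forbitten edges} and Proposition~\ref{lem: initial path}. The only points needing (routine) care are that the relators of $M$ genuinely hold in $\mathcal{S}_{\mathcal{E}}$ and that $\rightarrow_c^\ast$ is a congruence and not merely an equivalence relation; both follow directly from the definitions.
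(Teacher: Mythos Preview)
Your proposal is correct and is precisely the argument the paper intends: the theorem is stated immediately after Proposition~\ref{lemmaunico} with the remark that it follows from that proposition, and you have simply spelled out the routine identification of the congruence defining $M$ with $\rightarrow_c^\ast$ together with the well-definedness and surjectivity of $\phi$. There is no difference in approach.
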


We recall that the \emph{join} of two graphs $G$ and $H$ is the graph denoted by $G+H$ obtained by taking two disjoint copies of $G$ and $H$, and connecting each vertex of $G$ to each vertex of $H$. In the spirit of \cite[Proposition~3.5]{articolo0}, we obtain the following result.
\begin{cor}
Let $F=(V,E)$ be a forest formed by the trees $T_1, \ldots, T_m$, and let $\mathcal{E}$ be an orientation of this forest. If $\mathcal{S}_1, \ldots, \mathcal{S}_{m}$ are the corresponding partially commutative monoids associated with the trees $T_i$'s, then $\mathcal{S}_{\mathcal{E}}\simeq \mathcal{S}_1\times \cdots \times\mathcal{S}_{m}$. In particular $\mathcal{S}_{\mathcal{E}}$ has the following presentation:
\[
\la E \cup \{id\}\mid id=\mathbf{1}, ab=ba\mbox{ if }\{a,b\}\mbox{ is an edge in }L(T_1)^c + \cdots + L(T_m)^c\ra.
\]
\end{cor}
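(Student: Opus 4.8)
The plan is to reduce the statement to the tree case already settled in Theorem~\ref{theo: submonoid is a trace monoid}, by establishing a direct‑product decomposition of $\mathcal{S}_{\mathcal{E}}$ that mirrors the one for graph automaton groups in \cite[Proposition~3.5]{articolo0}, and then to transport the conclusion through a standard fact about partially commutative monoids on join graphs.

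First I would record the combinatorial identity underlying the presentation. The line graph of a disjoint union of graphs is the disjoint union of their line graphs, so $L(F)=L(T_1)\sqcup\cdots\sqcup L(T_m)$; since the complement of a disjoint union of graphs is the join of their complements, this gives $L(F)^c=L(T_1)^c+\cdots+L(T_m)^c$. On the other hand, if a commutation graph is a join $\Gamma_1+\cdots+\Gamma_m$, then every generator coming from one factor commutes with every generator coming from another, and comparing presentations shows that the partially commutative monoid on $\Gamma_1+\cdots+\Gamma_m$ is the direct product of the partially commutative monoids on the $\Gamma_i$'s. Applied to $\Gamma_i=L(T_i)^c$, and recalling that $\mathcal{S}_i$ is by definition the partially commutative monoid on $L(T_i)^c$, this shows that the monoid presented in the statement is exactly $\mathcal{S}_1\times\cdots\times\mathcal{S}_m$; so it only remains to prove $\mathcal{S}_{\mathcal{E}}\simeq\mathcal{S}_1\times\cdots\times\mathcal{S}_m$.

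For the latter I would argue as in \cite[Proposition~3.5]{articolo0}. Write $\mathcal{E}_i$ for the restriction of $\mathcal{E}$ to $T_i$ and let $\mathcal{S}_{\mathcal{E}_i}$ be the corresponding tree automaton semigroup, which by Theorem~\ref{theo: submonoid is a trace monoid} is isomorphic to $\mathcal{S}_i$. Since the $T_i$'s are vertex‑disjoint, two edges lying in different trees are never incident, so the corresponding generators act nontrivially only on disjoint subsets of $V$ and hence commute in $\mathcal{S}_{\mathcal{E}}$; moreover a word over $E(T_i)$ acts on $V(T_i)^\ast$ exactly as it does inside $\mathcal{A}_{T_i,\mathcal{E}_i}$ and fixes every letter outside $V(T_i)$. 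Using these cross‑commutations (and erasing $id$), every word over $E\cup\{id\}$ can be rewritten as a product $w_1\cdots w_m$ with $w_i\in E(T_i)^\ast$, and inspecting the action on each $V(T_i)^\ast$ shows that $u\equiv v$ holds in $\mathcal{S}_{\mathcal{E}}$ if and only if the $i$‑th blocks agree in $\mathcal{S}_{\mathcal{E}_i}$ for every $i$; cancellativity of automaton semigroups (they embed in the corresponding automaton groups, as already used in the proof of Proposition~\ref{lemmaunico}) makes this block decomposition well defined. Thus $e\mapsto(\mathbf{1},\ldots,e,\ldots,\mathbf{1})$ for $e\in E(T_i)$ and $id\mapsto(\mathbf{1},\ldots,\mathbf{1})$ extends to a monoid isomorphism $\mathcal{S}_{\mathcal{E}}\simeq\mathcal{S}_{\mathcal{E}_1}\times\cdots\times\mathcal{S}_{\mathcal{E}_m}\simeq\mathcal{S}_1\times\cdots\times\mathcal{S}_m$, and combining with the previous paragraph yields both assertions.

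The main obstacle is the well‑definedness of the componentwise decomposition, i.e. checking that equality in $\mathcal{S}_{\mathcal{E}}$ can be tested component by component. This is essentially bookkeeping: $\mathcal{A}_{F,\mathcal{E}}$ is literally the ``disjoint union'' of the automata $\mathcal{A}_{T_i,\mathcal{E}_i}$, sharing only the common sink $id$ and acting on pairwise disjoint alphabets, so the whole argument is the semigroup counterpart of \cite[Proposition~3.5]{articolo0}. Alternatively one could redo the analysis of Section~\ref{sec: semigroups} for forests directly, but the reduction above avoids it.
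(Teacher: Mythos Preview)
Your proposal is correct and follows exactly the route the paper indicates: the paper gives no explicit proof here, merely stating the corollary ``in the spirit of \cite[Proposition~3.5]{articolo0}'', and you have faithfully unpacked that reference by combining the direct-product decomposition for automata over disjoint graphs with Theorem~\ref{theo: submonoid is a trace monoid} and the elementary identity $L(T_1\sqcup\cdots\sqcup T_m)^c=L(T_1)^c+\cdots+L(T_m)^c$. The only minor remark is that cancellativity is not really needed for the well-definedness of the block decomposition: restricting the action to $V(T_i)^\ast$ already shows that the $i$-th block is determined up to $\equiv$, since the other blocks act trivially there.
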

The importance of Theorem \ref{theo: submonoid is a trace monoid} is highlighted by the next proposition and the following remark, which show that when the graph $G=(V,E)$ is not a tree, the semigroup is not independent of the orientation.

\begin{prop}
Let $\mathcal{E}$ be an orientation of the graph $G=(V,E)$ which does not produce an oriented cycle. Then in the semigroup $\mathcal{S}_{G,\mathcal{E}}$ there are no relations of the form $w=id$, $w\in E^*$.
\end{prop}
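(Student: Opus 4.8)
The plan is to argue by contradiction, using induction on $|V|$; inside the inductive step I assume the conclusion fails and pick a \emph{shortest} nonempty word $w\in E^{\ast}$ with $w\equiv id$ in $\mathcal{S}_{G,\mathcal{E}}$, say $w=e_{1}\cdots e_{n}$ with $e_{i}\in E$. From $w\equiv id$ one gets, via $w\circ(vu)=(w\circ v)\bigl((w\cdot v)\circ u\bigr)$, that the permutation of $V$ induced by $w$ is trivial and that every restriction $w\cdot v$ is again $\equiv id$ in $\mathcal{S}_{G,\mathcal{E}}$. The point where the hypothesis is used is this: since $\mathcal{E}$ produces no oriented cycle, $(V,\mathcal{E}(E))$ is a directed acyclic graph, hence it admits a topological order $v_{1}<\cdots<v_{m}$ with $x<y$ whenever $(x,y)\in\mathcal{E}(E)$; in particular $z:=v_{1}$ is a \emph{source}, i.e.\ every edge of $G$ incident to $z$ has the form $(z,\cdot)$. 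I will also use the elementary bookkeeping of restrictions: setting $x_{0}=z$ and $x_{i}=(e_{1}\cdots e_{i})\circ z$, the word $w\cdot z$ keeps (after erasing the occurrences of $id$) exactly those letters $e_{i}$ with $x_{i-1}=\mathrm{tail}(e_{i})$, and for such an $i$ one has $x_{i}=\mathrm{head}(e_{i})$, hence $x_{i-1}<x_{i}$ in the topological order.

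I then distinguish two cases. \emph{Case A: no edge occurring in $w$ is incident to $z$.} Then every edge of $w$ lies in some connected component $C$ of $G-z$; since generators attached to non-incident edges commute and act on disjoint subsets of $V$ (Section~\ref{sectiontre}), the subword $w_{C}$ of $w$ formed by the edges lying in $C$ acts on $V(C)^{\ast}$ exactly as $w$ does, so $w_{C}\equiv id$ in $\mathcal{S}_{C,\mathcal{E}|_{C}}$. As $\mathcal{E}|_{C}$ is still acyclic and $|V(C)|<|V|$, the inductive hypothesis gives $w_{C}=\emptyset$ for every such $C$, whence $w=\emptyset$, a contradiction. \emph{Case B: some edge of $w$ is incident to $z$.} Let $e_{i}$ be the first such edge; since none of $e_{1},\dots,e_{i-1}$ is incident to $z$ we get $x_{i-1}=z$, and since $z$ is a source, $z=\mathrm{tail}(e_{i})$, so $e_{i}$ survives in $w\cdot z$. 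Thus $w\cdot z$ is a \emph{nonempty} word over $E$, and it is $\equiv id$ because $w$ is. Finally $|w\cdot z|<|w|$: if every $e_{i}$ survived, the sequence $x_{0},\dots,x_{n}$ would be strictly increasing, $z=x_{0}<x_{1}<\cdots<x_{n}$, contradicting $x_{n}=w\circ z=z$ (recall the permutation induced by $w$ is trivial and $n\geq 1$); hence at least one $e_{i}$ does not survive and $|w\cdot z|\leq n-1$. This contradicts the minimality of $n$, and the proposition follows.

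The only genuinely delicate point is the strict length drop $|w\cdot z|<|w|$ in Case B, and it is exactly here that acyclicity is indispensable: for a cyclically oriented triangle and $w=(e_{1}e_{2}e_{3})^{2}$ one checks that \emph{every} letter survives in each restriction $w\cdot v$, so no naive induction on length could work without the hypothesis. Acyclicity enters solely through the observation that a strictly increasing path cannot return to its starting vertex, which rules out the no-drop scenario. All remaining ingredients are routine: the existence of a source in a finite directed acyclic graph, the commutation of disjoint-support generators already recorded in Section~\ref{sectiontre}, and the transfer of $w\equiv id$ to the restrictions $w\cdot v$.
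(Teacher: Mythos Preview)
Your proof is correct. The core mechanism---take a shortest counterexample, restrict at a well-chosen vertex, and use acyclicity to force a strict length drop---is the same as the paper's. The difference is in the choice of vertex and the resulting overhead.

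The paper restricts at $x_{1}:=\mathrm{tail}(a_{1})$, the tail of the \emph{first} letter of the minimal word $u=a_{1}\cdots a_{k}$. This choice guarantees for free that the restriction $u\cdot x_{1}$ begins with $a_{1}$ and is therefore nonempty; the acyclicity argument then gives the length drop exactly as in your Case~B. No case distinction and no outer induction on $|V|$ are needed.

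You instead restrict at a source $z$ of the DAG, which forces you to handle separately the possibility that no edge of $w$ is incident to $z$ (your Case~A), and this in turn requires the induction on $|V|$ and the passage to components of $G-z$. That passage is correct (edges in $G-z$ act on $(V\setminus\{z\})^{\ast}$ exactly as they do in $\mathcal{S}_{G-z}$, since their automaton transitions never involve $z$), but it is extra work that the paper's choice of restriction point sidesteps entirely. The trade-off: your argument makes the role of the source explicit and perhaps more conceptual, while the paper's is shorter and avoids the structural detour through subgraphs.
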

\begin{proof}
Suppose that the set $P=\{w\in E^*: w=id\mbox{ in }\mathcal{S}_{G,\mathcal{E}}\}$ of relations is nonempty, and let $u=a_1\ldots a_k\in P$ be a word with minimal length. In $\partial \mathcal{A}_{G,\mathcal{E}}$ consider the transition $x_1\vlongfr{a_1}{a_1}x_1'$. Since $\mathcal{E}$ does not give rise to any oriented cycle, and since there is a loop $x_1\vlongfr{u}{u'}x_1$ with $u'=x_1\circ u$, we deduce that $u'$ contains at least one occurrence of the identity $id$. Now, observe that it must be also $u'=id$ in $\mathcal{S}_{G,\mathcal{E}}$, and so $\varepsilon(u')$ is a shorter word in $P$. This situation may occur only if $u'\in\{id\}^*$. However, since we have the transition $x_1\vlongfr{a_1}{a_1}x_1'$, the word $u'$ starts with $a_1$, which is not the identity, a contradiction. Therefore we conclude that $P$ is empty.
\end{proof}

\begin{rem}\label{interesse}\rm
Take a graph $G=(V,E)$ which is not a tree, and consider an orientation $\mathcal{E}_1$ without oriented cycles, and a second orientation $\mathcal{E}_2$ containing an oriented cycle $w=a_1\ldots a_k$. We claim that $\mathcal{S}_{G,\mathcal{E}_1}$ and $\mathcal{S}_{G,\mathcal{E}_2}$ are not isomorphic. By contradiction, suppose that there is an isomorphism  $\varphi:\mathcal{S}_{G,\mathcal{E}_2}\to \mathcal{S}_{G,\mathcal{E}_1}$. By \cite[Theorem 3.7]{articolo0}, we have $w^{k-1}=id$, so that $w$ is a nontrivial torsion element in $\mathcal{S}_{G,\mathcal{E}_2}$, and so $(\varphi(w))^{k-1}=id$ in $\mathcal{S}_{G,\mathcal{E}_1}$, which implies that $\varphi(w)$  is also a nontrivial torsion element in $\mathcal{S}_{G,\mathcal{E}_1}$. This contradicts the previous proposition.
\end{rem}

\section{Reducible automata and poly-context-free groups}\label{sec: a general structure theorem}

In this section, we highlight an interesting connection between a class of automaton groups (generated by automata that we call reducible, see Definition \ref{def: reducible}) containing tree automaton groups and the class of the so-called poly-context-free groups. We need some preparation.

The word problem of a group $\mathcal{G}$ finitely generated by the set $A$ consists of the set of all words in $\wt{A}^{*}$ that represent the identity of $\mathcal{G}$. For this reason, the word problem can be described as a language, i.e., a set
of words over some finite alphabet.
Let $\la A|\mathcal{R}\ra$ be the standard presentation of the group $\mathcal{G}$. We have that $\mathcal{G}\simeq F_A/N$ where $N$ is the normal closure of the defining relations $\mathcal{R}$. The normal subgroup $N$ of $F_A$ is also denoted by $WP(\mathcal{G}:A;\mathcal{R})$ in the literature \cite{asimo}.
One of the most interesting problems in this setting is the algebraic characterization of a group $\mathcal{G}$ in terms of the language theoretic properties of  $WP(\mathcal{G}:A;\mathcal{R})$.
For example, Anisimov proved that a group is finite if and only if its word problem is a regular language \cite{asimo}.
A very celebrated result in this context is the classification of the groups with context-free word problem by M\"{u}ller and
Schupp, which states that a finitely generated group has context-free
word problem if and only if it is virtually free \cite{muahu, muahu2}. We recall that a regular language is a language recognized by a finite automaton, i.e., the set of words labeling all paths from an initial state to a given set of final states of the automaton. Moreover, a (deterministic) context-free language is a language generated by a context-free grammar. It is well known that, differently from the regular case, the intersection of (deterministic) context-free languages is not, in general, a (deterministic) context-free language \cite{hop}.  For this reason, it is natural to consider the closure of (deterministic) context-free languages under intersection. The intersection of $k$ (deterministic) context-free languages is said a (deterministic) $k$-context-free language. A language is (deterministic) poly-context-free if it is (deterministic) $k$-context-free for some $k\in\mathbb{N}$ \cite{Tara}.  A group is called $k$-context-free if its word problem is a (deterministic) $k$-context-free language.  A group whose word problem is a (deterministic) poly-context-free language is called poly-context-free group. Such groups have been introduced in \cite{Tara} and they have been further explored in \cite{TCS}, where it is proved that such groups have a (deterministic) multipass word problem.\\

\indent From now on we consider an invertible automaton $\mathcal{A}=(A,X,\lambda, \mu)$ and the generated group  $G(\mathcal{A})$.
Henceforth it is important to distinguish between elements in $\wt{A}^*$ and elements of the free group $F_A$, so we have to consider the use of the following two canonical morphisms:
$$
\theta:\wt{A}^*\to F_A  \qquad \qquad  \omega:\wt{A}^*\to G(\mathcal{A}).
$$
We first introduce a class of automaton groups which includes tree automaton groups. The inspiring property is the one stated in Lemma~\ref{lem: loop reduces}, which we may slightly generalize as follows.

\begin{defn}[Reducible automaton]\label{def: reducible}
An invertible automaton $\mathcal{A}=(A,X,\lambda, \mu)$ is reducible if, for any $w\in F_A$ such that $\mu(w,x)=x$ for some $x\in X$, there is an integer $n$ such that $|\lambda(w,u)|<|w|$ for all $u\in X^{\ge n}$.
\end{defn}

Observe that $A$ may contain a sink state $id$: if this is the case, we identify it with the identity $\mathbf{1}$ of $F_A$.\\
\indent It is easy to show, by applying Lemma~2.2 of \cite{Lavoro francescano}, that the class of automaton groups generated by reducible automata is contained in the class of automaton groups that do not have singular points. Moreover, the following proposition holds.
\begin{prop}
A reducible automaton $\mathcal{A}$ generates a contracting group.
\end{prop}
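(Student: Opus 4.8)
The plan is to show directly that the nucleus of $G(\mathcal{A})$ — the minimal finite set $\mathcal{N}$ witnessing the contracting property — exists by exhibiting a concrete finite candidate. First I would set up the reduction machinery: fix an invertible reducible automaton $\mathcal{A}=(A,X,\lambda,\mu)$ and let $g\in G(\mathcal{A})$ be represented by a reduced word $w\in F_A$ of length $\ell=|w|$. The key observation is that for \emph{every} $v\in X^{\ast}$ one has $\mu(w,v)=v'$ for some $v'$, and the word $w\cdot v=\lambda(w,v)$ lies again in $F_A$; moreover, whenever $w$ fixes some letter $x$, reducibility gives an $n$ with $|\lambda(w,u)|<|w|$ for all $u\in X^{\ge n}$. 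The main issue is that $w$ need not fix any letter of $X$, so I cannot apply reducibility immediately — the permutation part of $g$ can move every letter. The standard remedy is to pass to a power of the automaton (equivalently, to iterate): since $\mathrm{Sym}(X)$ is finite, there is an integer $d$, independent of $w$, such that for every $v\in X^{d}$ the word $\lambda(w,v)$ is the restriction of $w$ along a path that returns $v$ to a stabilized configuration at the next level; more precisely, for any $g$ and any long enough word $v$, the restriction $g\cdot v$ stabilizes $X$. I would make this precise by noting that $g\cdot v$ fixes a letter $x$ for suitable $v$ because the action on $X$ is through a finite group and one can always extend $v$ by one more letter chosen in the preimage of a fixed point of the relevant permutation.

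Having arranged this, the core of the argument is a length-reduction induction. I would prove: there is a function $n\colon F_A\to\mathbb{N}$ (depending only on $|w|$, using that there are finitely many reduced words of each bounded length up to the relations) such that for all $u\in X^{\ge n(w)}$ one has $|\lambda(w,u)|\le L$, where $L$ is some absolute constant. The induction is on $|w|$. If $|w|\le 1$ there is nothing to prove (these restrictions are among $A\cup\{\mathbf{1}\}$, a finite set). If $|w|\ge 2$, combine the stabilization step above with Definition~\ref{def: reducible}: after reading a bounded-length prefix of $u$, the restriction of $w$ along that prefix fixes some letter, so by reducibility a further bounded block of letters strictly decreases the length; iterating at most $|w|$ times drives the length down to the base case. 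Collecting the bounds, every sufficiently deep restriction of $g$ lands in the finite set
\[
\mathcal{N}_0 \;=\; \{\, \theta(\lambda(w,v)) : |w|\le L,\ v\in X^{\ast}\,\} \;\subseteq\; G(\mathcal{A}),
\]
which is finite because $\{w\in F_A : |w|\le L\}$ is finite and, crucially, once a restriction has length at most $L$ all its further restrictions again have length at most $L$ (restrictions never increase length beyond what reducibility permits — this needs the observation that $\lambda(w,x)$ for $|w|\le 1$ stays in $A\cup\{\mathbf 1\}$, and an easy induction for the general bounded case).

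The step I expect to be the genuine obstacle is making the "stabilization after a bounded prefix" claim uniform and clean: one must argue that there is a single integer $d$ (or at worst a bound depending only on $|X|!$ and $|w|$) such that every $g$ admits, below any vertex, a descendant of controlled depth whose restriction fixes a letter of $X$, so that Definition~\ref{def: reducible} can be fed with an actual fixed point rather than just "for some $x$". I would handle this by an explicit pigeonhole on the sequence of permutations $g\cdot v$ for $v$ ranging over increasingly long words, using that the map $v\mapsto (g\cdot v \text{ acting on }X)$ takes values in the finite set $\mathrm{Sym}(X)$ and that a permutation always fixes the image of any of its fixed points under the inverse — so extending $v$ by one suitably chosen letter produces a restriction fixing that letter. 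Once this is in place, the finiteness of $\mathcal{N}_0$ and the defining inequality of Definition~\ref{def: reducible} together yield that $\mathcal{N}_0$ is a finite set with the contracting property, hence $G(\mathcal{A})$ is contracting (with nucleus contained in $\mathcal{N}_0$). I would close by remarking that this simultaneously re-proves, for tree automaton groups, that $\mathcal{G}_T$ is contracting, since by Lemma~\ref{lem: loop reduces} the automaton $\mathcal{T}$ is reducible.
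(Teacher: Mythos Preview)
Your overall architecture (find a finite length bound $L$, show every deep enough restriction has length $\le L$, conclude contracting) is the same as the paper's. The genuine gap is exactly where you flagged it: the ``stabilization after a bounded prefix'' step does not work as you describe. Your proposed mechanism --- pigeonhole on the permutations $g\cdot v\in\mathrm{Sym}(X)$, then ``extend $v$ by a letter chosen in the preimage of a fixed point'' --- presupposes that the relevant permutation \emph{has} a fixed point. It need not. Already in the adding machine (which is reducible), the generator $a$ acts on $X=\{1,2\}$ as the transposition $(1\ 2)$, and along the ray $v=2^k$ one has $a\cdot v=a$ for all $k$; no restriction along that branch ever fixes a letter of $X$. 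So there is no uniform depth $d$ after which ``$g\cdot v$ fixes some $x$'' is guaranteed, and Definition~\ref{def: reducible} cannot be invoked on $g\cdot v$ the way you intend.

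The paper avoids this obstacle by applying pigeonhole \emph{inside the word} rather than on the sequence of restrictions. For any $w\in F_A$ with $|w|>m:=|X|$ and any $x\in X$, read $w$ as a path in the enriched dual $(\partial\mathcal{A})^{-}$ starting at $x$; this path visits $|w|+1>m$ states in a set of size $m$, so some state $y$ repeats, giving a factorization $w=w_1w_2w_3$ with $w_2\circ y=y$. Now reducibility applies directly to the \emph{subword} $w_2$, yielding $n$ with $|w_2\cdot u|<|w_2|$ for all $u\in X^{\ge n}$; since restrictions never increase length, $|w\cdot u|<|w|$ for all sufficiently long $u$. Iterating drives the length down to $\le m$, so $\mathcal{N}=\{\pi(w):|w|\le m\}$ works as a nucleus. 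In short: replace ``find a restriction fixing a letter'' by ``find a subword fixing a letter'', and the argument goes through cleanly.
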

\begin{proof}
Let $G(\mathcal{A})$ be the group generated by the automaton $\mathcal{A}$ and let $\pi: F_A \to G(\mathcal{A})$ be the canonical map. Consider the set
\[
\mathcal{N}=\{\pi(w) \mid w\in F_A, |w|\leq m\},
\]
where $m=|X|$. This is clearly a finite subset of $G(\mathcal{A})$ and we claim that this is the nucleus of $G(\mathcal{A})$. Indeed, let $h\not\in \mathcal{N}$ such that $h=\pi(w)$, for some $w\in F_A$ with $|w|>m$. Then, for any $x\in X$, in $(\partial \mathcal{A})^-$ there is a path
\[
x\vlongfr{w_1}{w_1'}y\vlongfr{w_2}{w_2'}y\vlongfr{w_3}{w_3'}z
\]
with $w=w_1w_2w_3$, since $m=|X|$. Now, by Definition~\ref{def: reducible} applied to $w_2$ we know that for a sufficiently large $n$ we have that $|w_2\cdot u|<|w_2|$ for all $u\in X^{\ge n}$. Thus, by iterating this argument, we may find an integer $N$ such that $|w\cdot v|\le m$ for all $v\in X^{\ge N}$, i.e., $\pi(w)\cdot v = h\cdot v\in \mathcal{N}$ for $v\in X^{\ge N}$.
\end{proof}
In conclusion, the class of groups defined by reducible automata is contained in the intersection of the class of contracting groups with the class of automaton groups without singular points. There are several groups generated by reducible automata. Lemma~\ref{lem: loop reduces} implies that tree automaton groups are generated by reducible automata. But also the famous Basilica group belongs to this class: this can be directly checked by showing that any element in the Basilica group can be reduced by restriction using a word of length at most two. A similar argument works also for a generalization of this group recently introduced in \cite{noce}. At the end of this section, we will give a general strategy to construct infinitely many reducible automata.
\\
\indent There is an interesting connection between groups defined by reducible automata and poly-context-free groups. Indeed, we will show (see Theorem \ref{thmastrazeneca}) that groups generated by reducible automata are direct limits of deterministic poly-context-free groups. Brough has conjectured in \cite{Tara} that the class of finitely generated poly-context-free groups coincides with the class of (finitely generated) groups which are virtually a finitely generated subgroup of a direct product of free groups. In this section, we partially support this conjecture by showing that the direct system of poly-context-free groups whose limit is the group associated with a reducible automaton group is formed by groups that are virtually a finitely generated subgroup of a direct product of free groups (see Proposition \ref{prop: k-fragile group structure}).\\
\indent We are going to associate with a reducible automaton $\mathcal{A}$ a direct system of groups by showing that $WP(G(\mathcal{A}):A;\mathcal{R})$ may be decomposed into a union of deterministic poly-context-free languages.
\\
\indent For each integer $k >0$, let $\St_{G(\mathcal{A})}(L_k)$ be the stabilizer of the $k$-th level of the rooted tree of degree $|X|$, identified with $X^k$. The \emph{language of $k$-fragile words} is defined as
\begin{equation}\label{defSk}
\mathcal{F}_k=\{w\in\omega^{-1}(\St_{G(\mathcal{A})}(L_k)): \theta(w\cdot u)=\mathbf{1}\mbox{ for all }u\in X^{\ge k} \}.
\end{equation}
The name is inspired by \cite{fragile, fragili} and refers to the existence of relations that eventually become trivial after certain restrictions. Note that $\mathcal{F}_k\subseteq \mathcal{F}_h$ if $h\ge k$.

We recall that a (symmetric) Dyck language over an alphabet $A$ is the word problem of the free group $F_A$ (see \cite{cho} for more details). We have the following proposition.

\begin{prop}\label{prop: decomposition by k-fragile}
Let $G(\mathcal{A})$ be a group generated by a reducible automaton. Then each $\mathcal{F}_k$ is a deterministic $|X|^k$-context-free language and
\[
 WP(G(\mathcal{A}):A;\mathcal{R})=\bigcup_{k>0} \mathcal{F}_k.
\]
\end{prop}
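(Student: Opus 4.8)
The plan is to prove the two assertions separately, starting with the equality of languages and then analysing the complexity of each $\mathcal{F}_k$.

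\emph{The decomposition.} First I would show that $\bigcup_{k>0}\mathcal{F}_k\subseteq WP(G(\mathcal{A}):A;\mathcal{R})$. If $w\in\mathcal{F}_k$, then $\omega(w)$ stabilises $L_k$ and, for every $u\in X^{\ge k}$, the restriction $w\cdot u$ represents the identity of $F_A$, hence $\omega(w)$ acts trivially on the subtree rooted at every vertex of $L_k$; combined with $\omega(w)\in\St_{G(\mathcal{A})}(L_k)$ this forces $\omega(w)=\mathbf{1}$, so $w$ is in the word problem. For the reverse inclusion, let $w\in\wt{A}^*$ with $\omega(w)=\mathbf{1}$. Set $v:=\theta(w)\in F_A$; then $v$ represents the identity of $G(\mathcal{A})$, so in particular $\mu(v,x)=x$ for all $x\in X$ (the identity fixes every letter). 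Since $\mathcal{A}$ is reducible (Definition~\ref{def: reducible}), for each $x\in X$ there is $n_x$ with $|\lambda(v,u)|<|v|$ for all $u\in X^{\ge n_x}$; taking the maximum and iterating the reduction (as in the proof that reducible automata are contracting), there is an integer $N$ such that $|\lambda(v,u)|$ eventually stabilises at its minimal value for all $u\in X^{\ge N}$. But $\lambda(v,u)$ again represents the identity of $G(\mathcal{A})$ for every $u$ (restrictions of a trivial element are trivial), and an element of $F_A$ of minimal length among all words representing $\mathbf{1}$ in $G(\mathcal{A})$ that cannot be further shortened by restriction must be $\mathbf{1}$ itself in $F_A$ — this is exactly the point where reducibility bites: a nontrivial reduced word $v'$ with $\mu(v',x)=x$ for all $x$ would admit a strictly shorter restriction. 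Hence $\theta(w\cdot u)=\lambda(v,u)=\mathbf{1}$ for all $u\in X^{\ge N}$, and since $\omega(w)=\mathbf{1}$ also fixes $L_N$, we get $w\in\mathcal{F}_N$. This proves the set equality.

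\emph{The complexity of $\mathcal{F}_k$.} For the first assertion I would exhibit, for each $v\in X^{\le k}$ (there are $|X|^k$ vertices in $L_k$, but one also needs to track the letters along the path), a deterministic pushdown automaton recognising the ``coordinate'' language $\{w\in\wt{A}^*: \omega(w) \text{ fixes } v \text{ and } \theta(w\cdot v)=\mathbf{1}\}$, and then intersect over the $|X|^k$ leaves $v\in X^k$. The key is the identification, stressed in the excerpt, of the action via the dual automaton $(\partial\mathcal{A})^-$: reading $w=a_1\cdots a_n\in\wt{A}^*$ letter by letter, one tracks the current vertex $x\circ(a_1\cdots a_i)$ of the tree (a finite-state quantity, handled by the control of the PDA) while simultaneously the output label along the corresponding path in $(\partial\mathcal{A})^-$ spells out, letter by letter, the word $w\cdot v\in\wt{A}^*$ whose image in $F_A$ we must test for triviality. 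Thus $\mathcal{F}_k$ is the inverse image, under the finite-state transducer $(\partial\mathcal{A})^-$ reading $w$ and outputting $(\,w\cdot v\,)_{v\in X^k}$, of the product of $|X|^k$ copies of the Dyck language $WP(F_A)$ (together with the regular condition that the path induced by $w$ returns to its start, i.e.\ that $\omega(w)\in\St_{G(\mathcal{A})}(L_k)$). Since the Dyck language is deterministic context-free, its inverse image under a (deterministic, single-valued) finite transducer is deterministic context-free, and the intersection of $|X|^k$ such languages (one per leaf) together with a regular language is by definition a deterministic $|X|^k$-context-free language.

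\emph{Main obstacle.} The delicate point is making precise that $\mathcal{F}_k$ is genuinely a \emph{deterministic} $|X|^k$-context-free language, not merely $|X|^k$-context-free: one must check that for each fixed leaf $v$ the transducer $w\mapsto w\cdot v$ is single-valued and that composing it with a deterministic PDA for the Dyck language yields a deterministic PDA — i.e.\ that reading $w$ and simultaneously maintaining the stack encoding the reduced form of $w\cdot v$ can be done deterministically, pushing on each input letter according to whether it cancels the top of the stack or extends it. This is routine but is the heart of the matter; the reducibility hypothesis plays no role here (it is only needed for the union exhausting $WP$), so the two halves of the proof are essentially independent.
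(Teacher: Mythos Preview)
Your proposal is correct and follows essentially the same route as the paper: the paper also writes $\mathcal{F}_k=S_k\cap\bigcap_{u\in X^k}F_u$ with $S_k=\omega^{-1}(\St_{G(\mathcal{A})}(L_k))$ regular and each $F_u$ the inverse image of the Dyck language under the deterministic transducer $w\mapsto w\cdot u$ given by the enriched dual, then uses reducibility iteratively (at most $|w|$ times) for the inclusion $WP\subseteq\bigcup_k\mathcal{F}_k$. The only detail you gloss over that the paper spells out is the handling of the sink state: when $id\in A$ one works with the Dyck language on $A'=A\setminus\{id\}$ and pulls back along the erasing morphism $\varepsilon$ to obtain $S_A=\varepsilon^{-1}(D_{A'})$, still deterministic context-free by closure under inverse homomorphism.
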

\begin{proof}
\indent Let us start by proving that each $\mathcal{F}_k$ is a deterministic $|X|^k$-context-free language.
\\
Let $A'=A\setminus\{id\}$ if $A$ contains the sink state $id$, otherwise we put $A'=A$. Let $D_{A'}$ be the Dyck language on the alphabet $A'$, i.e.:
$$
D_{A'}=\{w\in\wt{A'}^*: \theta(w)=\mathbf{1} \}, \quad \mbox{with } \theta:\wt{A'}^* \rightarrow F_A.
$$
It is a well known fact that $D_{A'}$ is a deterministic context-free language \cite{hop}. Let $S_A$ be the language obtained by the shuffle between $D_{A'}$ and $\{id,id^{-1}\}^*$, i.e., $S_A=\varepsilon^{-1}(D_{A'})$, where $\varepsilon:\wt{A}^\ast\to\wt{A'}^\ast$ is the morphism erasing all occurrences of $id,id^{-1}$. Roughly speaking, we are considering all possible insertions of letters $id,id^{-1}$ into elements in $D_{A'}$. Since deterministic context-free languages are closed under inverse homomorphisms \cite[Theorem~6.3]{hop}, we have that $S_A$ is also deterministic context-free. For every $u\in X^*$ define the language
\begin{equation}\label{defFu}
F_u=\{w\in \wt{A}^*: \theta(\varepsilon(w\cdot u))=\mathbf{1}\}.
\end{equation}
It follows from the definition that $F_u=\{w\in \wt{A}^*: w\cdot u\in S_A\}$. Let $\mathcal{B}=(\partial\mathcal{A})^{-}$ be the enriched dual automaton and consider its power $\mathcal{B}^{|u|}$, that is clearly a deterministic automaton. Let $T_u: \wt{A}^*\to \wt{A}^*$ be the mapping defined by $T_u(v)=u\circ v$ (where the action $\circ$ is referred to the one defined by $\mathcal{B}^{|u|}$). Observe that $F_u=T_u^{-1}(S_A)$, i.e., the set of words in $\wt{A}^{\ast}$ that label the input of paths starting at $u$ in $\mathcal{B}^{|u|}$ whose output belongs to $S_A$.  Since $S_A$ is deterministic context-free, then the language given by the words labeling its input is also deterministic context-free, as stated in \cite[Theorem 11.2]{hop}. Therefore we deduce that $F_u$ is a deterministic context-free language.  Now, we claim that $S_k:=\omega^{-1}(\St_{G(\mathcal{A})}(L_k))$ is a regular language. Indeed, for each $v\in L_k$ the language $\{w\in\wt{A}^*: w\circ v=v\}$ is regular since it is accepted by the automaton $\mathcal{B}^{k}$ with initial and final state $v$; therefore
\[
S_k=\bigcap_{v\in X^k} \{w\in\wt{A}^*: w\circ v=v\}
\]
is the intersection of regular languages, and it is also regular \cite[Theorem~3.3]{hop}. Finally
\begin{equation}\label{eq: intersection cf}
\mathcal{F}_k=S_k\cap\bigcap_{u\in X^{k}} F_u=\bigcap_{u\in X^{k}} (F_u\cap S_k)
\end{equation}
by virtue of Eq. \eqref{defSk} and Eq. \eqref{defFu}, so that $\mathcal{F}_k$ is the intersection of $|X|^k$ deterministic context-free languages, since the intersection of the deterministic context-free language $F_u$ with the regular language $S_k$ is still a deterministic context-free language \cite[Theorem~6.5]{hop}. Therefore, $\mathcal{F}_k$ is a deterministic $|X|^k$-context-free language.\\
\indent Now let $w\in  WP(G(\mathcal{A}):A;\mathcal{R})$: we clearly have $w\in \omega^{-1}(\St_{G(\mathcal{A})}(L_k))$ for all $k\ge 1$. If we put $m=|w|$, then by the reducibility of the automaton there is an $n_1$ such that $|\theta(w\cdot v)|<|\theta(w)|$ for all $v\in X^{\ge n_1}$, so eventually, after at most $m$ iterations of this argument, we may find $m$ positive integers $n_1,\ldots, n_m$ such that if $k=n_1+\cdots+ n_m$ it holds $\theta(w\cdot v)=\mathbf{1}$ for all $v\in X^{\ge k}$, i.e., $w\in \mathcal{F}_k$.\\
\indent Conversely we claim that, if $w\in  \mathcal{F}_k$ for some $k\geq 1$, then $w\in WP(G(\mathcal{A}):A;\mathcal{R})$. In order to prove that, it is enough to show that $w\circ u=u$ for all $u\in X^*$. Since $w\in \omega^{-1}(\St_{G(\mathcal{A})}(L_k))$ then $w\circ u'=u'$ for all $u'\in X^{\le k}$. Moreover, since $\theta(w\cdot u')=\mathbf{1}$ by hypothesis, we have that $w\cdot u'$ acts like the identity also on the subtree rooted at $u'$, thus $w\circ u'v=u'v$ for every $v\in X^*$ and so $w$ is in $WP(G(\mathcal{A}):A;\mathcal{R})$.\\
\end{proof}
Eq.~\eqref{eq: intersection cf} shows that $\mathcal{F}_k$ is the intersection of the deterministic context-free languages
\begin{equation}\label{nonbrutta}
F_u\cap S_k=\{w\in\omega^{-1}(\St_{G(\mathcal{A})}(L_k)): \theta(\varepsilon(w\cdot u))=\mathbf{1}\},
\end{equation}
with $u\in X^k$. We have the following lemma.

\begin{lemma}\label{lem: kernel of an homomorphism}
Let $u\in X^k$. There is a morphism $\varphi_u:H\to F_A$ from a finite index normal free subgroup $H$ of $F_A$ such that
\[
\ker(\varphi_u)=\theta(F_u\cap S_k).
\]
Moreover, $\theta^{-1}(\theta(F_u\cap S_k))=F_u\cap S_k$ holds.
\end{lemma}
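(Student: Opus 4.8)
The plan is to take $H=\St_{F_A}(L_k)$, the stabiliser of the $k$-th level for the action of $F_A$ on $X^{\ast}$ (via the maps $\cdot,\circ$ extended to $F_A$), and to take $\varphi_u$ to be the restriction map $g\mapsto g\cdot u$. First I would note that $H$ is exactly the kernel of the permutation representation $F_A\to\mathrm{Sym}(X^{\le k})$ giving the action of $F_A$ on the finite rooted tree $X^{\le k}$; since $\mathrm{Sym}(X^{\le k})$ is finite, $H$ is normal of finite index in $F_A$, and being a subgroup of a free group it is free by Nielsen--Schreier. I would also record the elementary identities $\theta^{-1}(H)=S_k$ and $\theta(S_k)=H$: the first holds because $\omega=\pi\circ\theta$ and the $F_A$-action on $X^{\ast}$ factors through $G(\mathcal A)$, and the second follows from surjectivity of $\theta$. (Reducibility of $\mathcal A$ is not needed for this lemma.)

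Next I would define $\varphi_u\colon H\to F_A$ by $\varphi_u(g)=g\cdot u$. This makes sense: $u\in X^k$ is fixed by every element of $H$, and the restriction map is well defined on $F_A$ as recalled after Eq.~\eqref{papaakiev}; by the composition formulas there it is a homomorphism (with the composition convention of Eq.~\eqref{papaakiev}; at worst an anti-homomorphism, which does not affect the kernel). To compute $\ker\varphi_u$, for $g\in H$ pick a reduced word $w\in\wt A^{\ast}$ with $\theta(w)=g$; then $w\in S_k$ and $\varphi_u(g)=\theta(w\cdot u)=\theta(\varepsilon(w\cdot u))$, so $g\in\ker\varphi_u$ if and only if $w\in F_u$, i.e.\ if and only if $w\in F_u\cap S_k$. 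This gives $\ker\varphi_u\subseteq\theta(F_u\cap S_k)$; conversely, every $w\in F_u\cap S_k$ has $\theta(w)\in\theta(S_k)=H$ and $\theta(w)\cdot u=\theta(\varepsilon(w\cdot u))=\mathbf{1}$, hence $\theta(w)\in\ker\varphi_u$. Therefore $\ker\varphi_u=\theta(F_u\cap S_k)$.

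For the final assertion, suppose $w\in\wt A^{\ast}$ with $\theta(w)\in\theta(F_u\cap S_k)=\ker\varphi_u\subseteq H$. Then $w\in\theta^{-1}(H)=S_k$, and $\theta(\varepsilon(w\cdot u))=\theta(w)\cdot u=\mathbf{1}$ forces $w\in F_u$, so $w\in F_u\cap S_k$; the reverse inclusion is obvious, and thus $\theta^{-1}(\theta(F_u\cap S_k))=F_u\cap S_k$ (equivalently, $F_u\cap S_k$ is a union of $\ker\theta$-cosets). The entire argument is a direct unwinding of the definitions of $F_u$, $S_k$ and $\theta$. The only step requiring some care is the well-definedness of restriction on $F_A$, i.e.\ that freely equal words have freely equal restrictions; this reduces to the observation that a cancelling pair $aa^{-1}$ restricts, at any word over $X$, again to a cancelling pair, which is built into the definition of the enriched dual $(\partial\mathcal A)^{-}$. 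I expect this---together with pinning down the composition convention so that $\varphi_u$ comes out a genuine homomorphism---to be the only (minor) obstacle.
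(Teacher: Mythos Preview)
Your proof is correct and follows essentially the same approach as the paper: both take $H=\theta(S_k)=\St_{F_A}(L_k)$ and define $\varphi_u$ as the restriction-to-$u$ map, then compute its kernel directly from the definitions of $F_u$ and $S_k$. The only cosmetic difference is that the paper builds $\varphi_u$ in two steps---first a monoid map $\phi_u\colon S_k\to\wt{A}^*$, $w\mapsto\varepsilon(w\cdot u)$, then its push-down to $H\to F_A$---and verifies well-definedness via the identity $(hh^{-1})\cdot u=(h\cdot u)(h\cdot u)^{-1}$, whereas you invoke directly the paper's earlier remark that $\cdot$ extends to $F_A$; these amount to the same thing.
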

\begin{proof}
First observe that $S_k=\omega^{-1}(\St_{G(\mathcal{A})}(L_k))$ is a submonoid of $\wt{A}^*$ and $H=\theta(S_k)$ is a finite index normal subgroup of $F_A$, since $\St_{G(\mathcal{A})}(L_k)$ is a finite index normal subgroup of $G(\mathcal{A})$. Let $\phi_u:S_k\to \wt{A}^*$ be the map defined as:
\begin{equation}\label{defphiu}
\phi_u(w)=\varepsilon(w\cdot u).
\end{equation}
We claim that this is a homomorphism (it is essentially the virtual endomorphism defined in \cite{nekrashevyc}). Indeed, for all $w_1, w_2\in S_k$, we have
\[
\phi_u(w_1\,w_2)=\varepsilon((w_1w_2)\cdot u)=\varepsilon((w_1\cdot u)\, (w_2\cdot u))=\varepsilon(w_1\cdot u)\varepsilon(w_2\cdot u)=\phi_u(w_1)\phi_u(w_2),
\]
since $w_i\circ u=u$. We now define $\varphi_u:H\to F_A$ by putting
$$
\varphi_u(g)=\theta(\phi_u(w)),
$$
where $w\in\wt{A}^*$ is any word with $\theta(w)=g$. Notice that, by the definition of $\phi_u$ in Eq. \eqref{defphiu}, the map $\varphi_u$ does not depend on the particular choice of $w$. Since
\begin{eqnarray}\label{hh-1}
(hh^{-1})\cdot u=(h\cdot u)(h\cdot u)^{-1}
\end{eqnarray}
holds for every $h\in S_k$, and, thus, $\phi_u(hh^{-1})=\phi_u(h)\phi_u(h)^{-1}=\mathbf{1}$ holds for every $h\in S_k$, we conclude that $\varphi_u$ is a well defined homomorphism. Moreover, it follows from the definition of $\varphi_u$ that
\[
\ker(\varphi_u)=\theta(\{w\in S_k: \theta(\varepsilon(w\cdot u))=\mathbf{1}\})=\theta(F_u\cap S_k),
\]
where the last equality follows from Eq. \eqref{nonbrutta}.\\
\indent
We conclude the proof by showing that $\theta^{-1}(\theta(F_u\cap S_k))=F_u\cap S_k$. Notice that it is enough to prove  that for any $u\in X^k$ one has $\theta^{-1}(\theta(F_u\cap S_k))\subseteq F_u\cap S_k$, since the other inclusion always holds.
Let $w\in \wt{A}^*$ be such that $w\in \theta^{-1}(\theta(F_u\cap S_k))$. Then $\theta(w)\in \theta(F_u\cap S_k)$, i.e., there exists $v\in F_u\cap S_k$ such that
$\theta(w)=\theta(v)$. This means that $w$ and $v$ only differ for factors of type $hh^{-1}$, then clearly $w$ and $v$ act in the same way on $X^{\ast}$, and in particular $w\in S_k$ because $v\in S_k$. On the other hand, Eq. \eqref{hh-1} ensures $\phi_u(w)=\phi_u(v)$. Hence $ \theta(\varepsilon(w\cdot u))= \theta(\varepsilon(v\cdot u))=\mathbf{1}$. In particular $w\in  F_u\cap S_k$.
\end{proof}

It follows from Lemma \ref{lem: kernel of an homomorphism} that
\begin{eqnarray}\label{aggiunta11feb}
\theta(\mathcal{F}_k)= \theta\left(\bigcap_{u\in X^k}(F_u\cap S_k)\right)=\bigcap_{u\in X^k}\theta (F_u\cap S_k) =\bigcap_{u\in X^k}\ker(\varphi_u),
\end{eqnarray}
where the second equality is a consequence of the property $\theta^{-1}(\theta(F_u\cap S_k))=F_u\cap S_k$. In particular, $\theta(\mathcal{F}_k)$ is a normal subgroup of $F_A$ since it is intersection of normal subgroups.

Therefore, we may define for each $k\ge 1$ the quotient group
\[
G_k=F_A/\theta(\mathcal{F}_k)
\]
that we call the \emph{$k$-th fragile group}. In view of Proposition~\ref{prop: decomposition by k-fragile} we have that each $k$-th fragile group $G_k$ is a poly-context-free group, in particular, it belongs to the class $\mathcal{BDG}$ defined in \cite{TCS}, consisting of the groups having a deterministic multipass word problem. Brough's conjecture states that groups whose word problem is a poly-context-free language are virtually subgroups of the direct product of free groups. To the best of the authors' knowledge, this conjecture is still open, and in Proposition \ref{prop: k-fragile group structure} we partially support it, by showing that it holds in the case of the fragile groups associated with a group defined by a reducible automaton. We first need the following fact.

\begin{lemma}\label{lem: intersection of languages}
Let $N=N_1\cap N_2\cap\ldots \cap N_k$ be a language that is intersection of $k$ languages on the alphabet $A$ with the property that $\theta^{-1} (\theta(N_i))=N_i$ for each $i=1,\ldots, k$. Suppose that for each $i=1,\ldots, k$ there is a morphism $\varphi_i:H_i\to F_{m_i}$ between a subgroup $H_i$ of $F_A$ of finite index and a free group $F_{m_i}$ of rank $m_i$ such that $\ker(\varphi_i)=\theta(N_i)$. Then the group $G=F_A/\theta(N)$ is virtually a subgroup of $F_{m_1}\times \cdots \times F_{m_k}$.
\end{lemma}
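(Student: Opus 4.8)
The plan is to move the intersection through the canonical projection $\theta$ using the saturation hypothesis, and then to bundle the maps $\varphi_i$ into a single homomorphism defined on a suitable finite-index subgroup of $F_A$, whose kernel turns out to be $\theta(N)$ intersected with that subgroup.

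First I would establish the key identity $\theta(N)=\bigcap_{i=1}^k\theta(N_i)$. The inclusion $\subseteq$ is automatic. For the reverse, given $g\in\bigcap_i\theta(N_i)$, pick any word $w\in N_1$ with $\theta(w)=g$; since $\theta(w)=g\in\theta(N_i)$ for every $i$, the hypothesis $\theta^{-1}(\theta(N_i))=N_i$ forces $w\in N_i$ for all $i$, hence $w\in N_1\cap\cdots\cap N_k=N$ and $g=\theta(w)\in\theta(N)$. Combining this with $\ker(\varphi_i)=\theta(N_i)$ yields
\[
\theta(N)=\bigcap_{i=1}^k\ker(\varphi_i),
\]
which is in particular a subgroup of $F_A$ (and a normal one whenever each $\theta(N_i)$ is normal, as happens in the situations where the lemma is applied, so that $G=F_A/\theta(N)$ is a genuine group).

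Next I would set $H=\bigcap_{i=1}^k H_i$; as a finite intersection of finite-index subgroups of $F_A$ it again has finite index. Since $H\leq H_i$ for every $i$, each $\varphi_i$ restricts to $H$ and one may define the group homomorphism
\[
\Phi\colon H\longrightarrow F_{m_1}\times\cdots\times F_{m_k},\qquad \Phi(h)=\bigl(\varphi_1(h),\ldots,\varphi_k(h)\bigr).
\]
A one-line computation, using the identity of the previous step, gives $\ker(\Phi)=\bigcap_i\bigl(\ker(\varphi_i)\cap H\bigr)=\theta(N)\cap H$, so $H/\bigl(H\cap\theta(N)\bigr)\cong\Phi(H)$ is (isomorphic to) a subgroup of $F_{m_1}\times\cdots\times F_{m_k}$. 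Finally, pushing $H$ forward along the projection $p\colon F_A\to G=F_A/\theta(N)$, its restriction to $H$ has kernel $H\cap\theta(N)$, hence $p(H)\cong H/(H\cap\theta(N))$ embeds into $F_{m_1}\times\cdots\times F_{m_k}$, while $[G:p(H)]\leq[F_A:H]<\infty$ because $p(H)$ is the image of a finite-index subgroup. Thus $G$ has a finite-index subgroup isomorphic to a subgroup of the direct product of free groups, which is exactly the claim.

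I expect the only genuinely load-bearing point to be the first step: the saturation hypothesis $\theta^{-1}(\theta(N_i))=N_i$ is precisely what lets $\theta$ commute with the intersection, and without it one obtains only $\theta(N)\subseteq\bigcap_i\theta(N_i)$, which is too weak to identify $\theta(N)$ with $\bigcap_i\ker(\varphi_i)$ and hence to control the quotient. Everything afterward is routine bookkeeping with finite-index subgroups; notice in particular that the Nielsen--Schreier freeness of $H$ plays no role in this lemma, only the index estimate $[G:p(H)]\leq[F_A:H]$.
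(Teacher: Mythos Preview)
Your proof is correct and follows essentially the same route as the paper's: intersect the $H_i$ to a finite-index subgroup, bundle the $\varphi_i$ into a single map to the direct product, identify its kernel with $\theta(N)$ via the saturation hypothesis, and read off the finite-index embedding. One small remark: since $\ker(\varphi_i)=\theta(N_i)\subseteq H_i$ for each $i$, you actually have $\theta(N)=\bigcap_i\theta(N_i)\subseteq\bigcap_i H_i=H$, so your $\theta(N)\cap H$ is simply $\theta(N)$ and $H/\theta(N)$ is already the desired finite-index subgroup of $G$.
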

\begin{proof}
Since each $H_i$ is a finite index subgroup of $F_A$, then  $\mathcal{H}=H_1\cap \ldots \cap H_k$ is a finite index subgroup of $F_A$. Let $\phi:\mathcal{H}\to F_{m_1}\times \cdots \times F_{m_k}$ be the morphism defined component-wise by $\phi=(\varphi_1,\ldots, \varphi_k)$. This is a well defined homomorphism with
\[
 \ker(\phi)=\ker(\varphi_1)\cap \ldots \cap \ker(\varphi_k)=\theta(N_1)\cap \ldots \cap \theta(N_k)=\theta(N),
\]
where the last equality follows from the property $\theta^{-1} (\theta(N_i))=N_i$ for each $i=1,\ldots, k$, as in Eq. \eqref{aggiunta11feb}.
Hence $\mathcal{H}/\theta(N)$ is a subgroup of $F_{m_1}\times \cdots \times F_{m_k}$. Since $\mathcal{H}$ is a finite index subgroup of $F_A$ we have that $\mathcal{H}/\theta(N)$ is a finite index subgroup of $G=F_A/\theta(N)$. Therefore $G$ is virtually a subgroup of the direct product of free groups $F_{m_1}\times \cdots \times F_{m_k}$.
\end{proof}
We denote by $F_A^m$ the $m$-th iterated direct product of the free group $F_A$ with itself.
\begin{prop}\label{prop: k-fragile group structure}
Each $k$-th fragile group $G_k$ is virtually a subgroup of the direct product of $F_A^m$ with $m=|X|^k$.
\end{prop}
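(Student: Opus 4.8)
The plan is to deduce the statement directly from Lemma~\ref{lem: intersection of languages}, feeding in the structural information already obtained for the languages $F_u\cap S_k$. Recall from Eq.~\eqref{eq: intersection cf} that $\mathcal{F}_k=\bigcap_{u\in X^k}(F_u\cap S_k)$, so $\mathcal{F}_k$ is an intersection of exactly $m=|X|^k$ languages, one for each word $u\in X^k$. The idea is to apply Lemma~\ref{lem: intersection of languages} to this family $N_u:=F_u\cap S_k$, after checking that each $N_u$ meets the two hypotheses of that lemma.

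First I would verify the hypothesis $\theta^{-1}(\theta(N_u))=N_u$: this is precisely the ``moreover'' part of Lemma~\ref{lem: kernel of an homomorphism}. Next, the same lemma supplies, for each $u\in X^k$, a homomorphism $\varphi_u\colon H\to F_A$ defined on a finite index (normal, free) subgroup $H$ of $F_A$ with $\ker(\varphi_u)=\theta(F_u\cap S_k)=\theta(N_u)$; since $F_A$ is free of rank $|A|$, this is exactly a morphism from a finite index subgroup to a free group $F_{m_u}$ with $m_u=|A|$, as required. Note that here all the subgroups $H_i$ coincide with the single $H$, which only simplifies the argument.

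With these checks in place, Lemma~\ref{lem: intersection of languages} applies and yields that $G_k=F_A/\theta(\mathcal{F}_k)$ is virtually a subgroup of the direct product of $m=|X|^k$ copies of $F_A$, i.e.\ of $F_A^m$. Concretely, the relevant finite index subgroup is $\mathcal{H}=\bigcap_{u\in X^k}H=H$, the diagonal morphism is $\phi=(\varphi_u)_{u\in X^k}\colon\mathcal{H}\to F_A^m$, and its kernel is $\bigcap_{u\in X^k}\ker(\varphi_u)=\bigcap_{u\in X^k}\theta(F_u\cap S_k)=\theta(\mathcal{F}_k)$, which is exactly the content of Eq.~\eqref{aggiunta11feb}. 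Hence $\mathcal{H}/\theta(\mathcal{F}_k)$ embeds into $F_A^m$ and has finite index in $G_k$.

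I do not expect a genuine obstacle here: the proposition is essentially a packaging corollary collecting the work done in Lemma~\ref{lem: kernel of an homomorphism} (the virtual endomorphisms $\varphi_u$ and the saturation property $\theta^{-1}\theta(N_u)=N_u$) and in Lemma~\ref{lem: intersection of languages} (combining finitely many such morphisms into one valued in a direct product of free groups). The only point demanding mild care is the bookkeeping of the index $m=|X|^k$ --- ensuring that the number of languages in the intersection equals the number of free factors in the target --- but this is already transparent from Eq.~\eqref{eq: intersection cf}.
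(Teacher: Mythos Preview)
Your proposal is correct and follows essentially the same approach as the paper: it applies Lemma~\ref{lem: intersection of languages} to the family $N_u=F_u\cap S_k$ (with $u\in X^k$), using Lemma~\ref{lem: kernel of an homomorphism} to supply both the morphisms $\varphi_u:H\to F_A$ and the saturation property $\theta^{-1}(\theta(N_u))=N_u$, and Eq.~\eqref{aggiunta11feb} to identify the resulting kernel with $\theta(\mathcal{F}_k)$. Your write-up is simply more explicit about verifying the hypotheses and tracking the count $m=|X|^k$.
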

\begin{proof}
Let $u\in X^k$ and put $N_u=F_u\cap S_k$. Recall that $G_k=F_A/\theta(\mathcal{F}_k)$ and that $\theta(\mathcal{F}_k)=\bigcap_{u\in X^k}\theta(N_u)$ by Eq. \eqref{aggiunta11feb}. The claim follows from Lemma~\ref{lem: intersection of languages} applied to the morphisms $\varphi_u:H\to F_A$ with $\ker(\varphi_u)=\theta(F_u\cap S_k)$ as in Lemma~\ref{lem: kernel of an homomorphism}.
\end{proof}

Since $\mathcal{F}_i\subseteq \mathcal{F}_j$ holds for $j\ge i$, we have a family of naturally defined epimorphisms $\psi_{i,j}:G_i\to G_j$. Thus, the family of fragile groups $\{G_i\}_{i\in\mathbb{N}}$ together with the epimorphisms $\psi_{i,j}$ is a direct system of groups whose direct limit $\varinjlim G_i$ is $G(\mathcal{A})$ by virtue of Proposition~\ref{prop: decomposition by k-fragile}. Therefore Proposition~\ref{prop: k-fragile group structure} implies the following structural result.
\begin{theorem}\label{thmastrazeneca}
An automaton group $G(\mathcal{A})$ associated with a reducible invertible automaton $\mathcal{A}$ is isomorphic to the direct limit of groups which are virtually subgroups of the direct product of free groups.
\end{theorem}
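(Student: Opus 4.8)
The plan is to combine the two facts established just above the statement. By Proposition~\ref{prop: decomposition by k-fragile} the word problem $WP(G(\mathcal{A}):A;\mathcal{R})$ equals the increasing union $\bigcup_{k>0}\mathcal{F}_k$; passing to $\theta$-images and using the identity $\theta^{-1}(\theta(F_u\cap S_k))=F_u\cap S_k$ from Lemma~\ref{lem: kernel of an homomorphism} (which propagates to $\theta^{-1}(\theta(\mathcal{F}_k))=\mathcal{F}_k$), one obtains an ascending chain $\theta(\mathcal{F}_1)\subseteq\theta(\mathcal{F}_2)\subseteq\cdots$ of normal subgroups of $F_A$ whose union is precisely the normal subgroup $N$ with $G(\mathcal{A})\simeq F_A/N$. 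First I would record this chain and the resulting fragile groups $G_k=F_A/\theta(\mathcal{F}_k)$ together with the canonical epimorphisms $\psi_{i,j}:G_i\to G_j$ for $i\le j$, so that $\{G_i\}_{i\in\mathbb{N}}$ becomes a direct system, exactly as set up in the paragraph preceding the theorem.

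Next I would invoke the elementary fact that for an ascending union $N=\bigcup_i N_i$ of normal subgroups of a group $F$ one has $F/N\simeq\varinjlim_i F/N_i$, with the bonding maps being the canonical projections. Applied with $F=F_A$ and $N_i=\theta(\mathcal{F}_i)$ this yields $G(\mathcal{A})\simeq\varinjlim_i G_i$. Concretely, one checks that the maps $G_i\to G(\mathcal{A})$ induced by $\theta(\mathcal{F}_i)\subseteq N$ are compatible with the $\psi_{i,j}$ and are jointly injective and surjective in the limit: surjectivity is clear since every element of $G(\mathcal{A})$ is the image of some word, and injectivity holds because a word representing the identity of $G(\mathcal{A})$ lies in some $\mathcal{F}_k$ by the reducibility of $\mathcal{A}$, hence already dies in $G_k$ --- this is exactly the content of Proposition~\ref{prop: decomposition by k-fragile}.

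Finally I would apply Proposition~\ref{prop: k-fragile group structure}: each $k$-th fragile group $G_k$ is virtually a subgroup of the direct product $F_A^{m}$ with $m=|X|^k$, i.e.\ virtually a finitely generated subgroup of a direct product of finitely many free groups. Combining this with the isomorphism $G(\mathcal{A})\simeq\varinjlim_k G_k$ from the previous paragraph gives the claimed description of $G(\mathcal{A})$ as a direct limit of groups which are virtually subgroups of a direct product of free groups.

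I do not expect a genuine obstacle here, since all of the combinatorial and language-theoretic work was already carried out in Propositions~\ref{prop: decomposition by k-fragile} and~\ref{prop: k-fragile group structure} and Lemmas~\ref{lem: kernel of an homomorphism} and~\ref{lem: intersection of languages}. The only point that needs a little care is the categorical bookkeeping of the direct limit: verifying that the projections $G_i\to G(\mathcal{A})$ satisfy the universal property of $\varinjlim G_i$, which amounts to the two observations that $\bigcup_i\theta(\mathcal{F}_i)=N$ and that $\theta(\mathcal{F}_i)$ is exactly the kernel of $F_A\to G_i$, both of which are immediate from the results already proved.
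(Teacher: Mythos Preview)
Your proposal is correct and follows essentially the same approach as the paper: the theorem is deduced directly from the direct system of fragile groups $G_k=F_A/\theta(\mathcal{F}_k)$, using Proposition~\ref{prop: decomposition by k-fragile} to identify $\varinjlim G_k$ with $G(\mathcal{A})$ and Proposition~\ref{prop: k-fragile group structure} to see that each $G_k$ is virtually a subgroup of a direct product of free groups. The paper states this in one sentence without spelling out the direct-limit verification; your additional remarks on that point are sound but not strictly needed.
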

Theorem \ref{thmastrazeneca} gives some insight on the general structure of $G(\mathcal{A})$ via its associated fragile groups. The following result probably belongs to the folklore and it has the same flavor of \cite[Proposition 1]{sidki2}. Recall that the Nielsen-Schreier theorem states that a subgroup of a free group is itself isomorphic to a free group. Moreover, a group $G$ is Hopfian if every epimorphism from $G$ to $G$
is an isomorphism (see, for instance, \cite{rob}).
\begin{prop}\label{prop: dichotomy}
Let $H$ be a subgroup of the direct product $F_{n_1}\times \cdots \times F_{n_k}$ of free groups. Then, either $H$ is abelian or it contains a free non-abelian subgroup.
\end{prop}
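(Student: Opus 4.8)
The plan is to prove the equivalent statement: if $H$ is not abelian then it contains a non-abelian free subgroup (an abelian $H$ satisfying the first alternative trivially). The whole argument rests on two classical facts, both already available in the paper: the Nielsen--Schreier theorem, and the fact that a free group is a projective object, so that every epimorphism of groups onto a free group splits.

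First I would use the product structure to localize the non-commutativity in a single coordinate. Assuming $H$ non-abelian, pick $g,h\in H$ with $gh\neq hg$, and write $g=(g_1,\dots,g_k)$, $h=(h_1,\dots,h_k)$ with $g_i,h_i\in F_{n_i}$. Since $gh\neq hg$ in $F_{n_1}\times\cdots\times F_{n_k}$, there is an index $i$ for which $g_ih_i\neq h_ig_i$ in $F_{n_i}$. Put $L=\langle g,h\rangle\leq H$ and let $\rho_i\colon F_{n_1}\times\cdots\times F_{n_k}\to F_{n_i}$ be the $i$-th coordinate projection. Then $\rho_i(L)=\langle g_i,h_i\rangle$ is a subgroup of the free group $F_{n_i}$, hence free by Nielsen--Schreier, and it is non-abelian because $g_i$ and $h_i$ do not commute; being $2$-generated it has rank at most $2$, and being non-abelian it has rank exactly $2$, so $\rho_i(L)\cong F_2$ (although for the conclusion it suffices to know it is a non-abelian free group).

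Next I would lift this copy of $F_2$ back inside $H$. The restricted projection $\rho_i|_L\colon L\twoheadrightarrow \rho_i(L)$ is an epimorphism onto a free group; choosing a preimage in $L$ of each free generator of $\rho_i(L)$ and extending by the universal property of free groups yields a section $\sigma\colon\rho_i(L)\to L$ with $\rho_i\circ\sigma=\mathrm{id}$. In particular $\sigma$ is injective, so $\sigma(\rho_i(L))\cong F_2$ is a non-abelian free subgroup of $L\leq H$, which is what we wanted.

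I do not anticipate a genuine obstacle: the argument is short and self-contained. The only points needing a little care are the first reduction --- that non-commutativity in a direct product is witnessed in a single factor --- and the remark that a $2$-generated subgroup of a free group is automatically free of rank at most $2$, so that non-abelianity upgrades it to rank exactly $2$. One could alternatively argue by induction on $k$ (splitting $H$ via the kernel of the projection that forgets the last factor, and invoking the fact that abelian subgroups of free groups are cyclic to handle the abelian-by-abelian case), but that route is longer, so I would prefer the direct argument above.
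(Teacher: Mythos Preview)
Your proof is correct and follows the same overall strategy as the paper: project to a single free factor where non-commutativity survives, observe via Nielsen--Schreier that the image is free of rank~$2$, and then pull an $F_2$ back into $H$. The only genuine difference is in this last lifting step. You invoke projectivity of free groups to split the surjection $\rho_i|_L\colon L\twoheadrightarrow F_2$, obtaining a copy of $F_2$ as a \emph{subgroup} of $L$. The paper instead appeals to the Hopfian property of $F_2$: since $L=\langle g,h\rangle$ is $2$-generated and surjects onto $F_2$, the composite $F_2\twoheadrightarrow L\twoheadrightarrow F_2$ must be an isomorphism, forcing $L$ itself to be free of rank~$2$. The paper's route thus yields the marginally stronger conclusion that the two non-commuting elements already freely generate $F_2$, whereas your splitting argument only locates an $F_2$ inside $\langle g,h\rangle$; for the statement at hand either suffices. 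Your choice to start from an arbitrary non-commuting pair rather than from a generating set is a mild simplification over the paper's case split on whether every coordinate projection of $H$ is cyclic.
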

\begin{proof}
Let $\{g_i, i\in I\}$ be a set of generators for $H$, and let $p_j:H\to F_{n_j}$ be the projecting homomorphism onto the $j$-th component. We consider the following two mutually excluding cases.
\begin{itemize}
\item For each $j=1,\ldots, k$ there is an element $h_j\in F_{n_j}$ such that $p_j(g_i)\in\la h_j\ra $ for all $i\in I$. Note that this is equivalent to the fact that $\la p_j(g_i), i\in I \ra $ is a free subgroup of $F_{n_j}$ of rank $1$ by the Nielsen-Schreier theorem.
Thus, for every generator $g_i$, we have $g_i=(h_1^{s_1},\ldots, h_k^{s_k})$ for some integers $s_1, \ldots, s_k$. Therefore the generators $\{g_i, i\in I\}$ commute, as they commute component-wise, and so $H$ is abelian.
\item There is  some $j\in\{1,\ldots, k\}$ and $s,t\in I$ such that $\la p_j(g_s),p_j(g_t)\ra $ is a free subgroup of $F_{n_j}$ of rank $2$ by the Nielsen-Schreier theorem. Thus $\la g_s, g_t\ra$ is a free subgroup of rank 2 in $H$, since finitely generated free groups are Hopfian.
\end{itemize}
\end{proof}
The following theorem holds.
\begin{theorem}\label{thmnonome}
Let $\mathcal{A}$ be a reducible automaton and suppose that $G(\mathcal{A})$ has exponential growth. Then the fragile groups $\{G_{i}\}$ are all non-amenable.
\end{theorem}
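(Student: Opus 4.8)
The plan is to leverage two facts already established: that each fragile group $G_i$ is virtually a subgroup of a direct product of finitely many free groups (Proposition~\ref{prop: k-fragile group structure}), and that $G(\mathcal{A})$ is a \emph{quotient} of every $G_i$. For the latter, recall that $G_i = F_A/\theta(\mathcal{F}_i)$ and that, by Proposition~\ref{prop: decomposition by k-fragile}, $\ker\pi = \theta\bigl(\bigcup_{k>0}\mathcal{F}_k\bigr)=\bigcup_{k>0}\theta(\mathcal{F}_k)\supseteq\theta(\mathcal{F}_i)$, where $\pi\colon F_A\to G(\mathcal{A})$. Since $\theta(\mathcal{F}_i)$ is a normal subgroup of $F_A$ contained in $\ker\pi$, the identity on $F_A$ descends to a canonical epimorphism $G_i\twoheadrightarrow G(\mathcal{A})$ for every $i$; equivalently, this is the structure map of the direct system $\{G_i\}$ into its limit $G(\mathcal{A})$, which is onto precisely because the transition maps $\psi_{i,j}$ are.

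I would then argue by contradiction. Suppose some $G_i$ is amenable. By Proposition~\ref{prop: k-fragile group structure}, $G_i$ has a finite-index subgroup $H$ that embeds in $F_A^{m}$ (a direct product of $m=|X|^i$ copies of the free group $F_A$), and $H$ is amenable as well, being of finite index in $G_i$. By Proposition~\ref{prop: dichotomy}, $H$ is either abelian or contains a non-abelian free subgroup; the second alternative is impossible, since a non-abelian free group is not amenable, so $H$ is abelian. Hence $G_i$ is virtually abelian, and therefore has polynomial growth. But $G(\mathcal{A})$ is a quotient of $G_i$ mapping the standard generating set onto the standard generating set, so its growth function is dominated by that of $G_i$; thus $G(\mathcal{A})$ would also have polynomial growth, contradicting the hypothesis that $G(\mathcal{A})$ has exponential growth. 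Therefore every $G_i$ is non-amenable.

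The only point genuinely requiring care is the first one: making precise that $G(\mathcal{A})$ is a quotient of each $G_i$ rather than merely the target of some map, which is exactly what the decomposition $WP(G(\mathcal{A}):A;\mathcal{R})=\bigcup_{k>0}\mathcal{F}_k$ of Proposition~\ref{prop: decomposition by k-fragile} provides. Everything else is standard bookkeeping: finite-index subgroups of amenable groups are amenable, non-abelian free groups are non-amenable, finitely generated virtually abelian groups have polynomial growth, and the growth type of a finitely generated group does not increase under passage to a quotient. I do not expect any real obstacle beyond assembling these ingredients in the right order.
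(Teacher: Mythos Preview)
Your proposal is correct and follows essentially the same route as the paper: establish the surjection $G_i\twoheadrightarrow G(\mathcal{A})$ via Proposition~\ref{prop: decomposition by k-fragile}, invoke Proposition~\ref{prop: k-fragile group structure} to get a finite-index subgroup inside a direct product of free groups, apply the dichotomy of Proposition~\ref{prop: dichotomy}, and use the exponential growth of $G(\mathcal{A})$ to rule out the virtually abelian branch. The only cosmetic difference is that you phrase it as a contradiction starting from ``$G_i$ amenable'' whereas the paper argues directly that the abelian alternative in the dichotomy is excluded by the growth hypothesis; the logical content is identical.
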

\begin{proof}
Let $\psi_i:G_i\to G(\mathcal{A})$ be the canonical homomorphism. Notice that $\psi_i$ is surjective for any $i$ by Proposition \ref{prop: decomposition by k-fragile}. Since $G(\mathcal{A})$ has exponential growth, it cannot be the homomorphic image of a virtually abelian group. By using Proposition \ref{prop: k-fragile group structure} and Proposition~\ref{prop: dichotomy} we deduce that $G_i$ is a finite index subgroup of the direct product of free groups containing a free group of rank at least $2$. In particular, $G_i$ is not amenable.
\end{proof}

It follows from the previous theorem that, if $G(\mathcal{A})$ is the group associated with a reducible automaton and it has exponential growth, then it is a direct limit of non-amenable groups. This gives us important information about the presentation of $G(\mathcal{A})$.

\begin{theorem}\label{presentability}
Let $\mathcal{A}$ be a reducible automaton and suppose that $G(\mathcal{A})$ is amenable with exponential growth. Then $G(\mathcal{A})$ is not finitely presented.
\end{theorem}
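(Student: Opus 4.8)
The plan is to argue by contradiction: assuming $G(\mathcal{A})$ is finitely presented, I will show that it must be virtually abelian, which is incompatible with the hypothesis of exponential growth. The heart of the argument is a compactness property of finitely presented groups inside direct limits: if a finitely presented group $G$ is the direct limit of a directed system $\{G_i\}$ with surjective bonding maps, then $G$ is isomorphic to a retract of some $G_j$. I would prove the instance needed here by hand. Fix a finite presentation $G(\mathcal{A})=\langle x_1,\dots,x_n\mid r_1,\dots,r_m\rangle$; recalling that $G(\mathcal{A})=\varinjlim G_i$ along the surjections $\psi_i$ (established before the statement, on the basis of Proposition~\ref{prop: decomposition by k-fragile}), lift the generators $x_k$ to elements $\tilde x_k\in G_{i_0}$. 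Each relator $r_l(\tilde x_1,\dots,\tilde x_n)$ lies in $\ker\psi_{i_0}$, and since $G(\mathcal{A})=\varinjlim G_i$ this kernel is the union of the kernels of the bonding maps $\psi_{i_0,j}$, so for each $l$ there is $j_l\ge i_0$ with $\psi_{i_0,j_l}\big(r_l(\tilde x)\big)=\mathbf{1}$. Choosing $j$ above all the $j_l$, the images $\hat x_k=\psi_{i_0,j}(\tilde x_k)$ satisfy every relator in $G_j$, hence the assignment $x_k\mapsto\hat x_k$ defines a homomorphism $s\colon G(\mathcal{A})\to G_j$; compatibility of the system, $\psi_j\circ\psi_{i_0,j}=\psi_{i_0}$, gives $\psi_j\circ s=\mathrm{id}$. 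In particular $s$ is injective and realizes $G(\mathcal{A})$ as a subgroup of the $j$-th fragile group $G_j$.

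Now I would bring in the structural results already proved. By Proposition~\ref{prop: k-fragile group structure}, $G_j$ has a finite-index subgroup $H$ that embeds in a direct product of free groups; hence $s(G(\mathcal{A}))\cap H$ is a finite-index subgroup of $G(\mathcal{A})$ which itself embeds in a direct product of free groups. Proposition~\ref{prop: dichotomy}, applied to this subgroup, says that it is either abelian or contains a non-abelian free subgroup. The latter alternative is ruled out by amenability of $G(\mathcal{A})$, since amenability passes to subgroups and a non-abelian free group is not amenable --- this is precisely where the amenability hypothesis enters, and it is the statement of Theorem~\ref{thmnonome} read ``from above''. Therefore $s(G(\mathcal{A}))\cap H$ is abelian, so $G(\mathcal{A})$ is virtually abelian and thus has polynomial growth, contradicting the assumed exponential growth. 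Hence $G(\mathcal{A})$ is not finitely presented.

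The load-bearing step is the retract argument of the first paragraph, which is folklore but must be carried out with the bonding maps of the direct system tracked carefully; this is the only place where finite presentability is actually used. After that the proof is an assembly of Propositions~\ref{prop: k-fragile group structure} and \ref{prop: dichotomy}, the single point deserving explicit mention being that a subgroup of a group that is virtually embeddable in a direct product of free groups is again virtually embeddable in a direct product of free groups --- immediate by intersecting with the pertinent finite-index subgroup, but needed before Proposition~\ref{prop: dichotomy} can be invoked. I do not expect any further difficulty.
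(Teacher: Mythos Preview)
Your proof is correct, but it takes a longer route than the paper's. The paper exploits the fact that the direct system is very concrete: every $G_k$ is $F_A/\theta(\mathcal{F}_k)$ for the \emph{same} free group $F_A$, and $G(\mathcal{A})=F_A/N$ with $N=\bigcup_k\theta(\mathcal{F}_k)$. So if $G(\mathcal{A})$ has a finite presentation on the generators $A$, the finitely many relators $w_1,\dots,w_i$ already lie in some $\theta(\mathcal{F}_{\bar k})$; since their normal closure is all of $N$ while $\theta(\mathcal{F}_{\bar k})\subseteq N$, one gets $\theta(\mathcal{F}_{\bar k})=N$ and hence $G(\mathcal{A})\cong G_{\bar k}$ outright. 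Then a single appeal to Theorem~\ref{thmnonome} gives the contradiction with amenability.

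Your general retract argument for finitely presented direct limits is valid, but here it is heavier machinery than needed: you only obtain an embedding $s\colon G(\mathcal{A})\hookrightarrow G_j$, and then you must reprove the content of Theorem~\ref{thmnonome} from the inside (intersecting with the finite-index subgroup, invoking Proposition~\ref{prop: dichotomy}, and using amenability to rule out free subgroups, finally contradicting exponential growth rather than amenability). What your approach buys is generality---it would work for any direct system with surjective bonding maps---whereas the paper's argument is a two-line shortcut that depends on all the $G_k$ sharing the presentation alphabet $A$. Both reach the goal; the paper's is shorter and leans on Theorem~\ref{thmnonome} as a black box.
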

\begin{proof}
Suppose, by contradiction, that $G(\mathcal{A})$ is finitely presented and then there exist $i$ relations $w_1,\ldots,w_i\in F_A$ such that $G(\mathcal{A})$ is isomorphic to $F_A/N$, where $N$ is the normal closure of
$w_1,\ldots,w_i$. By Proposition \ref{prop: decomposition by k-fragile} there exists $\bar{k}$ such that $w_1,\ldots,w_i\in \theta(\mathcal F_{\bar{k}})$: this implies that    $G(\mathcal{A})$ is isomorphic to the $\bar{k}$-th fragile group $G_{\bar{k}}$. This is a contradiction, because  by Theorem \ref{thmnonome} the group $G_{\bar{k}}$ is non-amenable.
\end{proof}

The previous theorem applies to a large class of automaton groups. In fact if $\mathcal{A}$ is a bounded automaton, then  $G(\mathcal{A})$ is amenable  \cite{amenability bounded}. Moreover, a graph automaton group  associated with a graph with at least two edges has exponential growth  \cite[Corollary 3.10]{articolo0} and,  by virtue of  Lemma~\ref{lem: loop reduces}, when the graph is a tree  the group is generated by a reducible automaton. The following corollary follows.

\begin{cor}\label{namemay}
Tree automaton groups associated with trees having at least two edges are not finitely presented and they are amenable groups obtained as a direct limit of non-amenable groups.
\end{cor}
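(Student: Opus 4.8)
The plan is to assemble the statement from the general results proved in this section together with the three features of tree automaton groups that are recorded in the paragraph preceding the corollary. Fix a tree $T=(V,E)$ with at least two edges and let $\mathcal{G}_T$ be the associated tree automaton group, generated by the automaton $\mathcal{T}$. First I would check that $\mathcal{G}_T$ meets the hypotheses of Theorem~\ref{thmnonome} and Theorem~\ref{presentability}: namely, (a) $\mathcal{T}$ is a reducible automaton, which is the observation following Definition~\ref{def: reducible}, based on Lemma~\ref{lem: loop reduces}; (b) $\mathcal{G}_T$ is amenable, because by \cite[Theorem~3.7]{articolo0} the automaton $\mathcal{A}_{G,\mathcal{E}}$ is bounded and bounded automata generate amenable groups \cite{amenability bounded}; (c) $\mathcal{G}_T$ has exponential growth, by \cite[Corollary~3.10]{articolo0}, which is exactly where the assumption ``at least two edges'' is used.

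With (a)--(c) in hand, the first assertion is immediate: Theorem~\ref{presentability} applies verbatim to $\mathcal{A}=\mathcal{T}$, since $\mathcal{G}_T=G(\mathcal{T})$ is generated by a reducible automaton and is amenable of exponential growth; hence $\mathcal{G}_T$ is not finitely presented.

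For the second assertion, I would invoke the direct-limit description of $\mathcal{G}_T$. By Proposition~\ref{prop: decomposition by k-fragile} the word problem of $\mathcal{G}_T$ decomposes as $\bigcup_{k>0}\mathcal{F}_k$, and consequently $\mathcal{G}_T$ is the direct limit $\varinjlim G_k$ of its fragile groups along the epimorphisms $\psi_{i,j}$. By Theorem~\ref{thmnonome}, applied again via (a) and (c), every fragile group $G_k$ is non-amenable. Together with the amenability of $\mathcal{G}_T$ coming from (b), this exhibits $\mathcal{G}_T$ as an amenable group that is a direct limit of non-amenable groups, completing the proof.

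I do not anticipate a genuine obstacle here: all the substance --- the reducibility of $\mathcal{T}$, the fragile decomposition of the word problem, the virtual embedding of each $G_k$ into a direct product of free groups, and the boundedness and growth input imported from \cite{articolo0} --- has already been established. The only point deserving care is the bookkeeping around hypotheses: verifying that ``$T$ has at least two edges'' is precisely the condition needed for \cite[Corollary~3.10]{articolo0}, and keeping in mind that this hypothesis cannot be dropped, since for a tree with a single edge one has $\mathcal{G}_T\cong\mathbb{Z}$, which is finitely presented and certainly not a direct limit of non-amenable groups.
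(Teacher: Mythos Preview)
Your proposal is correct and follows essentially the same approach as the paper: you verify reducibility via Lemma~\ref{lem: loop reduces}, amenability via boundedness and \cite{amenability bounded}, and exponential growth via \cite[Corollary~3.10]{articolo0}, then apply Theorems~\ref{presentability} and~\ref{thmnonome}. The paper's argument is exactly this assembly, stated in the paragraph immediately preceding the corollary; your version is simply more explicit about invoking the direct-limit description from Proposition~\ref{prop: decomposition by k-fragile}.
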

We notice that this corollary shows that the presentation property of tree automaton groups is strongly different from the presentation property of tree automaton semigroup (compare with Theorem \ref{theo: submonoid is a trace monoid}). It also follows from Corollary \ref{namemay} that tree automaton groups constitute a class of groups that do not have free non-abelian subgroups although they are direct limit of groups containing free non-abelian groups.\\

\indent We conclude this section by exhibiting a procedure to construct a reducible automaton. Such a method defines a class of reducible automata that contains also tree automata and several more. Suppose that we want to build a reducible invertible automaton $\mathcal{A}$ on the set of states $A$ containing the sink state $id$. Given a finite index subgroup $H$ of $F_A$, the Schreier graph $Sch(H,A)$ of $H$ with respect to $A$ is defined as the graph whose vertices are the right cosets $Hg = \{ hg : h \in H \}$, for $g \in F_A$, and whose edges are of type $(Hg,Hga), a \in A$. In particular, one labels by $a$ the (oriented) edge $(Hg,Hga)$.
We associate with $H$ an invertible reduced automaton $\mathcal{B}$ by defining its enriched dual $(\partial\mathcal{B})^-$ as follows.
\begin{enumerate}
\item We first construct the Schreier graph $Sch(H,\wt{A})$ of $H$ with respect to the symmetric generating set $\wt{A}$.
\item Let $Y$ be a spanning tree of $Sch(H,\wt{A})$. The automaton $\partial\mathcal{B}^-$ is obtained from $Sch(H,\wt{A})$ by adding the outputs as follows:
\begin{itemize}
\item if the transition $p\mapright{a}q$ (and thus also its inverse $q\mapright{a^{-1}}p$) does not belong to $Y$, then we modify it by adding outputs as $p\vlongfr{a}{id}q$ and $q\vlongfr{a^{-1}}{id^{-1}}p$;
\item otherwise, we arbitrarily assign an output from $A$ in case $p\mapright{a}q$ belongs to the spanning tree $Y$.
\end{itemize}
\end{enumerate}
It is not difficult to see that any word $g\in F_A$ labeling a loop at some vertex $q$ of $(\partial\mathcal{B})^-$  has the property that $|q\circ g|<|g|$, thus $\mathcal{B}$ is reducible.

\end{document}